\documentclass[11pt,leqno]{article}

\usepackage{amsfonts,latexsym,amsmath,amssymb,amsthm}
\usepackage{hyperref}
\usepackage{fullpage}

\newtheorem{theorem}{Theorem}[section]
\newtheorem{proposition}[theorem]{Proposition}
\newtheorem{lemma}[theorem]{Lemma}
\newtheorem{corollary}[theorem]{Corollary}
\theoremstyle{definition}
\newtheorem{definition}[theorem]{Definition}
\newtheorem{remark}[theorem]{Remark}
\newcommand{\ls}{\leqslant}
\newcommand{\gr}{\geqslant}
\newcommand{\R}{\mathbb{R}}
\newcommand{\N}{\mathbb{N}}
\newcommand{\Exp}{\mathbb{E}}
\newcommand{\Prob}{\mathbb{P}}

\newcommand{\conv}{\operatorname{conv}}
\newcommand{\rank}{\operatorname{rank}}
\newcommand{\Ker}{\operatorname{Ker}}
\newcommand{\spn}{\operatorname{span}}
\newcommand{\Proj}{\operatorname{Proj}}
\newcommand{\BM}{d_{\mathrm{BM}}}
\numberwithin{equation}{section}

\usepackage{setspace}
\begin{document}
\small

\title{\bf Dimension-free cotype for isotropic log-concave random polytope spaces}
\author{Antonios Hmadi}
\date{}
\maketitle

\begin{abstract}
\footnotesize
Let $X_1,\ldots,X_N$ be independent random vectors in $\R^n$ with common isotropic log-concave distribution $\mu$ and set $P_{N,n}^{\mu}:=\conv\{\pm X_i:1\ls i\ls N\}$.
Assume that $N/n=\gamma\gr\gamma_0$ where $\gamma_0>1$ is an absolute constant.
We prove that with probability at least $1-C\gamma\exp(-c n^{1/4})$ every $k$-dimensional subspace $E$ of $(\R^n,\|\cdot\|_{P_{N,n}^{\mu}})$ satisfies $\BM(E,\ell_\infty^k)\gr c\gamma^{-C}k^\alpha$ for every $1\ls k\ls n$ where $c,C,\alpha>0$ are absolute constants.
Consequently, with the same probability, $(\R^n,\|\cdot\|_{P_{N,n}^{\mu}})$ has cotype $q(\gamma)<\infty$ with cotype constant depending only on $\gamma$, in particular the cotype exponent and the cotype constant are independent of $n$ and of $\mu$.
The proof adapts the deterministic coefficient scheme of Huang--Tikhomirov replacing the Gaussian estimates in their argument by estimates for isotropic log-concave random matrices.
As an application, using the log-concave extension of Gluskin's theorem, we obtain a separable Banach space of finite cotype for which the Banach--Mazur diameter of its $k$-dimensional subspaces is of order $k$ and whose finite-dimensional building blocks are generated by isotropic log-concave random polytopes.
\end{abstract}

\section{Introduction}

Let $\mu$ be an isotropic log-concave probability measure on $\R^n$ and let $X_1,\ldots,X_N$ be independent random vectors with distribution $\mu$. 
We set $P_{N,n}^{\mu}:=\conv\{\pm X_i:1\ls i\ls N\}$ and write $X_{N,n}^{\mu}:=(\R^n,\|\cdot\|_{P_{N,n}^{\mu}})$ for the associated normed space.

Recently, Huang and Tikhomirov \cite{HT26} proved dimension-free cotype estimates for the corresponding spaces generated by standard Gaussian random vectors.
Their proof is based on quantitative lower bounds for the Banach--Mazur distance between $\ell_\infty^k$ and the
$k$-dimensional subspaces of the corresponding random normed spaces.

The purpose of this paper is to extend these results from the Gaussian
setting to arbitrary isotropic log-concave measures. We show that whenever the
aspect ratio
$$\gamma=\frac{N}{n}$$
is bounded below by an absolute constant, the spaces $X_{N,n}^{\mu}$ satisfy
dimension-free cotype estimates with high probability. Equivalently, we obtain
quantitative lower bounds on the Banach--Mazur distance between
$\ell_\infty^k$ and the finite-dimensional subspaces of $X_{N,n}^{\mu}$.
Our estimates are uniform over all isotropic log-concave measures and require
neither symmetry, product structure, nor rotational invariance.

The proof of the geometric statement excluding almost isometric copies of $\ell_\infty^k$
occupies most of the paper. The cotype estimates follow from it through
the Maurey--Pisier theorem. As an application, combining our results with the
log-concave extension of Gluskin's theorem yields a realization of the
Huang--Tikhomirov construction of a Banach space of finite cotype whose local
distance function has maximal order.

\subsection{Main results}

Recall that a Banach space ${\bf Y}$ has cotype $q\in[2,\infty)$ with constant $C_q$ if for every $k\in\N$ and every $y_1,\ldots,y_k\in{\bf Y}$
$$\left(\Exp_\sigma\left\|\sum_{i=1}^k\sigma_i y_i\right\|_{\bf Y}^q\right)^{1/q} \gr \frac{1}{C_q} \left(\sum_{i=1}^k\|y_i\|_{\bf Y}^q\right)^{1/q},$$
where $\sigma=(\sigma_1,\ldots,\sigma_k)$ is uniformly distributed on $\{-1,1\}^k$. 
Our main result is the following.

\begin{theorem}\label{thm:intro_cotype}
There exist absolute constants $\gamma_0>1$, $C_0\gr1$ and $C,c>0$ with the following property. Let $N>n$, put $\gamma=N/n$ and assume that $\gamma\gr\gamma_0$. Set
$$ k_{\mathrm{MP}}(\gamma):=\left\lceil C_0\gamma^{C_0}\right\rceil, \qquad q(\gamma):=1+\left(50k_{\mathrm{MP}}(\gamma)\right)^{2(k_{\mathrm{MP}}(\gamma)+1)}$$
and $C_A(\gamma):=Ck_{\mathrm{MP}}(\gamma)^{1/2}$. 
Let $\mu$ be an isotropic log-concave probability measure on $\R^n$ and let $X_1,\ldots,X_N$ be independent random vectors with distribution $\mu$. 
Then with probability at least $1-C\gamma\exp(-c n^{1/4})$ the space $X_{N,n}^{\mu}$ has cotype $q(\gamma)$ with cotype constant at most $C_A(\gamma)$.
\end{theorem}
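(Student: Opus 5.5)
The plan is to derive Theorem~\ref{thm:intro_cotype} from the geometric statement described in the abstract. That statement says that, on an event of probability at least $1-C\gamma\exp(-cn^{1/4})$, every $k$-dimensional subspace $E$ of $X_{N,n}^{\mu}$ satisfies $\BM(E,\ell_\infty^k)\gr c\gamma^{-C}k^\alpha$. Once this is available, the passage to cotype is the quantitative Maurey--Pisier theorem. I would therefore split the proof into two steps: (i) the probabilistic--geometric step excluding good copies of $\ell_\infty^k$, which is the content of most of the paper; (ii) a deterministic step converting ``no $2$-isomorphic copy of $\ell_\infty^{k}$ with $k=k_{\mathrm{MP}}(\gamma)$'' into cotype $q(\gamma)$ with an explicit constant.

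For step (ii), choose $C_0$ so that $c\gamma^{-C}k^\alpha>2$ once $k\gr C_0\gamma^{C_0}$. Then $\BM(E,\ell_\infty^k)>2$ for every $k$-dimensional subspace $E$ with $k=k_{\mathrm{MP}}(\gamma)$. I would invoke a quantitative form of Maurey--Pisier, following Huang--Tikhomirov. If a Banach space does not contain $\ell_\infty^k$'s $2$-uniformly, then it has cotype $q$ for every $q$ exceeding an explicit exponent $(50k)^{2(k+1)}$, which comes from an Alon--Milman/Tomczak-Jaegermann style bound on the growth of the cotype constants $C_q^{(m)}$ over $m$ vectors. The cotype-$q$ constant is then bounded by $Ck^{1/2}$. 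This step is dimension-free and does not involve $\mu$. I would essentially quote the corresponding lemma used in \cite{HT26}, checking that its constants match $q(\gamma)$ and $C_A(\gamma)$.

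Step (i) is the main obstacle. I would follow the Huang--Tikhomirov deterministic coefficient scheme. Suppose $E$ has a basis $y_1,\dots,y_k$ with $\max|a_j|\ls\|\sum a_jy_j\|_{P}\ls D\max|a_j|$. By duality, $\|y\|_{P}=$ the minimal $\ell_1$-norm of coefficients $t$ with $y=\sum t_iX_i$, i.e.\ $y=Gt$ where $G$ is the $n\times N$ matrix with columns $X_i$. The scheme produces a structured family of sign vectors whose images must be representable with small $\ell_1$ coefficients, and then derives a contradiction from properties of $G$. The Gaussian inputs to replace are: (a) the operator norm $\|G\|\ls C\sqrt N$, which follows for log-concave columns with probability $1-\exp(-c\sqrt N)$ by Adamczak--Litvak--Pajor--Tomczak-Jaegermann; (b) lower bounds on $\|Gt\|_2$ for sparse or spread $t$, via restricted isometry properties and small-ball estimates for log-concave vectors (Paouris); (c) the fact that $P$ contains a large multiple of the Euclidean ball, $P\supseteq c\sqrt{\log\gamma}\,B_2^n$, from the log-concave Gluskin/Dafnis--Giannopoulos--Tsolomitis estimates. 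The probability $1-C\gamma\exp(-cn^{1/4})$ signals union bounds over $\gamma$-many blocks, with $n^{1/4}$-type tails coming from Paouris or Guédon--Milman deviation inequalities. The delicate point is that rotational invariance is lost. So every step that projected $G$ onto subspaces or used Gaussian conditioning must be replaced by statements holding uniformly over subspaces, using nets together with the above concentration bounds. Keeping the dependence on $\gamma$ polynomial there is what fixes $k_{\mathrm{MP}}(\gamma)=\lceil C_0\gamma^{C_0}\rceil$.
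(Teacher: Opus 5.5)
Your step (ii) is exactly how the paper proves this statement (Theorem~\ref{thm:cotype}). On the event of Theorem~\ref{thm:sections} one chooses $C_0$ with $c_1\gamma^{-C_1}k_{\mathrm{MP}}(\gamma)^{\alpha}>2$, so $\ell_\infty^{k_{\mathrm{MP}}(\gamma)}$ is not $2$-isomorphic to a subspace, and one applies quantitative Maurey--Pisier with $\varepsilon=1$. Two bookkeeping corrections:
\begin{itemize}
\item The quantitative form the paper uses is \cite[Theorem~7.3.8, Remark~7.3.9(d)]{HNVW17}. The paper notes that \cite{HT26} uses only the qualitative version, so there is no lemma there to quote.
\item That theorem gives cotype constant at most $5$, not $Ck^{1/2}$. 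The factor $k_{\mathrm{MP}}(\gamma)^{1/2}$ in $C_A(\gamma)$ appears only because the paper treats $n\ls k_{\mathrm{MP}}(\gamma)$ separately via John's theorem.
\end{itemize}
Your version does not need that case split. For $n<k_{\mathrm{MP}}(\gamma)$ the Maurey--Pisier hypothesis holds vacuously, and $n=k_{\mathrm{MP}}(\gamma)$ is covered by the exclusion theorem with $k=n$. Since $5\ls C_A(\gamma)$, this part is fine.

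Your sketch of step (i), however, contains one false input and one mechanism that would not work.

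\emph{The inradius claim (c) is false.} The inclusion $P\supseteq c\sqrt{\log\gamma}\,B_2^n$ fails for general isotropic log-concave $\mu$. For $\mu$ uniform on $[-\sqrt3,\sqrt3]^n$ one has $P\subseteq\sqrt3B_\infty^n$, hence $r(P)\ls\sqrt3$ for every $\gamma$. The scheme does not need it. The paper uses only $r(P)\gr c_{\mathrm{rad}}$, obtained from $h_P(\theta)=\|A^\top\theta\|_\infty\gr N^{-1/2}s_{\min}(A^\top)$ and $s_{\min}(A^\top)\gr c\sqrt N$ (Theorem~\ref{thm:sharp_emp_cov}). It also uses the relative inradius $\simeq\sqrt{n/|I|}$ of sparse subpolytopes, which comes from the restricted isometry event.

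\emph{Nets are too lossy here.} $\|\Proj_FX_j\|_2$ has only Paouris-type tails $\exp(-cs\sqrt d)$ at level $s\sqrt d$, and this is sharp in general. A direct union bound against a net of $G_{n,d}$ of cardinality $\exp(Cnd)$ therefore controls only counts of order $n\sqrt d/s$. In the regime $d\sim k$, $s\sim k^{1/8}$ used in Proposition~\ref{prop:spansofcomp}, this bound exceeds $n$ and is useless. The paper avoids nets entirely with the deterministic Lemma~\ref{lem:projection_counting}. The identity $\sum_j\|\Proj_FX_j\|_2^2\ls d\|A\|^2$ gives $|\{j:\|\Proj_FX_j\|_2>Ls\sqrt d\}|\ls N/s^2$ simultaneously for all $F$ once $\|A\|\ls L\sqrt N$. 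That is what yields the counts in Proposition~\ref{prop:dotprodstat}.

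\emph{Source of the $n^{1/4}$.} It comes not from Paouris but from the approximation event (Lemma~\ref{lem:approxlemma}). Restricted isometry at sparsity $m_k\approx nk^{-1/2}$ and precision $k^{-1/13}$ fails with probability at most $\exp(-c\sqrt{m_k}\ln(\cdot))\ls\exp(-cn^{1/4})$. The factor $\gamma$ comes from the union bound over the $\gamma n$ column norms in the Gu\'edon--Milman estimate.
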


In particular, for every fixed $\gamma\gr\gamma_0$ the cotype exponent and the cotype constant are independent of the dimension and of the measure $\mu$.
It follows from the definition of $k_{\mathrm{MP}}(\gamma)$ that there exists an absolute constant $C_1>0$ such that $C_A(\gamma)\ls C_1\gamma^{C_0/2}$ and $q(\gamma)\ls\exp\left(C_1\gamma^{C_0}\ln(e\gamma)\right).$

Theorem~\ref{thm:intro_cotype} is deduced from the quantitative exclusion of
$\ell_\infty^k$-subspaces stated below via the cotype part of the
Maurey--Pisier theorem.

\begin{theorem}\label{thm:intro_sections}
There exist absolute constants $\gamma_0>1$ and $c,C,\alpha>0$ with the following property. 
Let $N>n$, put $\gamma=N/n$ and assume that $\gamma\gr\gamma_0$. 
Let $\mu$ be an isotropic log-concave probability measure on $\R^n$ and let $X_1,\ldots,X_N$ be independent random vectors with distribution $\mu$. 
Then with probability at least $1-C\gamma\exp(-c n^{1/4})$ every $k$-dimensional subspace $E$ of $X_{N,n}^{\mu}$ satisfies $\BM(E,\ell_\infty^k)\gr c\gamma^{-C}k^\alpha$ for every $1\ls k\ls n$.
\end{theorem}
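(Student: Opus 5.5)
The plan is to follow the deterministic coefficient scheme of Huang--Tikhomirov and isolate exactly which random-matrix inputs it uses, then supply log-concave substitutes. Argue by contradiction. Suppose $E\subset X_{N,n}^{\mu}$ is $k$-dimensional with $\BM(E,\ell_\infty^k)\ls D:=c\gamma^{-C}k^\alpha$. Then there are vectors $y_1,\ldots,y_k\in E$ with
$$\max_i|a_i|\ls\Big\|\sum a_iy_i\Big\|_{P}\ls D\max_i|a_i|.$$
Equivalently, $P\cap E$ contains the cube-like body $D^{-1}\conv\{\sum\pm y_i\}$, while $\|y_i\|_P\gr1$. Write $P=AB_1^N$, where $A=[X_1\cdots X_N]$ is the $n\times N$ matrix. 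Then $\|x\|_P=\min\{\|t\|_1:At=x\}$. Each of the $2^k$ sign combinations $\sum\varepsilon_iy_i$ therefore has a representation $t^{\varepsilon}\in\R^N$ with $\|t^\varepsilon\|_1\ls D$. Averaging over subsets of signs, in the spirit of the Huang--Tikhomirov coefficient scheme, yields coefficient vectors $t^{(j)}\in\R^N$. These are small in $\ell_1$, essentially at most $D$, and satisfy $At^{(j)}=y_j$. Moreover, combinatorially many signed combinations of them must remain nearly extremal. The deterministic lemma extracted from their argument shows that this forces a vector $u=\sum_j a_jt^{(j)}$ with a specific structure: an $\ell_1$-mass comparable to $k$ concentrated on a set of size $m\ll N$, while $\|Au\|_P$ stays bounded by $D$. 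This yields a contradiction once $A$ has good "restricted" norm properties.

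Concretely, the random inputs I would need, all holding simultaneously with probability $1-C\gamma e^{-cn^{1/4}}$, are as follows.
\begin{enumerate}
\item An inclusion $P\supseteq c\,\min\{\sqrt{\ln(e\gamma)},\ \cdot\}B_2^n$ together with $P\subseteq C\sqrt{n}\ldots$; the lower inclusion is taken from the log-concave version of Gluskin / Dafnis--Giannopoulos--Tsolomitis, valid for $N\gr\gamma_0 n$.
\item Uniform bounds for $\|A\|_{\ell_2^N\to\ell_2^n}\ls C\sqrt{N}$ (Adamczak--Litvak--Pajor--Tomczak-Jaegermann), and for the restricted quantities $\sup_{|S|\ls m}\|A_S\|\ls C(\sqrt n+\sqrt{m}\ln(eN/m))$.
\item A lower bound on the smallest singular value of $A^T$, or more precisely $\inf_{x\in S^{n-1}}\|A^Tx\|_1\gr cN$, which is what controls $\|\cdot\|_P$ from below through duality: $\|x\|_P\gr\sup_\theta\langle x,\theta\rangle/\max_i|\langle X_i,\theta\rangle|$.
\end{enumerate}
The probability $e^{-cn^{1/4}}$ comes from Paouris-type deviation with the $\sqrt n$ tail, combined with a union bound over the $N$ columns, which gives the factor $\gamma$ in $C\gamma$.

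Having these, I would run the Huang--Tikhomirov chain. First, for $k$ smaller than a power of $\gamma$ the claim is trivial by choosing $c\gamma^{-C}$ small, since $\BM\gr1$. For larger $k$, I would use (3) and the Euclidean inclusions to show that the $y_j$ are almost orthogonal-like, with Euclidean norms in a range of ratio $\gamma^{C}$. Next, I would apply the deterministic coefficient lemma to find sparse-support mass. Finally, (2) shows that sparse combinations of columns cannot produce vectors of large $P$-norm unless $D\gr c\gamma^{-C}k^\alpha$. Balancing the sparsity level $m$ against $k$ produces the exponent $\alpha$.

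The main obstacle is step (3) and the restricted-norm bound of (2) in the regime where Gaussian rotation invariance was used. Huang--Tikhomirov control the projections $\Proj_{E^\perp}X_i$ by independence of Gaussian projections, which fails for general $\mu$. I would first try to replace every such step by a net argument over the subspace combined with the $\psi_1$ marginal tails and Paouris' inequality. This costs only $\gamma^{C}$ factors and degrades the probability to $e^{-cn^{1/4}}$, and it is this loss that the weaker probability in the statement accommodates.
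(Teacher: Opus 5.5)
Your proposal leaves the deterministic core of the proof unproved. It defers to ``the deterministic lemma extracted from their argument'' without stating it. The mechanism you attribute to that lemma also does not close the argument with the inputs you list.

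\textbf{The deterministic step.} The vector $u=\sum_j a_j t^{(j)}$ is a linear combination of representations. Since $Y\mapsto\beta(Y)$ is nonlinear when $N>n$, $u$ is in general far from an $\ell_1$-minimal representation of $Au=\sum_j a_jy_j$. So $\ell_1$-mass of $u$ on a sparse set gives no lower bound on $\|Au\|_P$ unless you control $\Ker A$ (a null-space or incompressibility property) and the mass of $u$ off that set. Your input (2) is only an upper bound on $\|A_S\|$. Via $r(P)\gr c$ it yields upper bounds on $\|A_Su_S\|_P$, which is the wrong direction for the contradiction you describe. Nothing in your sketch actually produces such a $u$.

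The paper's argument is quite different:
\begin{itemize}
\item It first regularizes via Lemma~\ref{lem:ellinftyreg} to unit vectors with $\|y_i\|_P\in[\delta,2\delta]$.
\item It then splits according to the dyadic profile $m_\delta(y_i,r)$ of the coefficients $\beta(y_i)$.
\item The spiky case is ruled out by the cleaning lemma and the pseudo-incompressibility Lemma~\ref{lem:generalPNbasic}, which rests on incompressibility of $\Ker A$ (Lemma~\ref{lem:incompcomb}).
\item In the flat case, Lemma~\ref{lem:stairs} produces vectors $\widetilde y_i$ with $\|\widetilde y_i\|_P\gtrsim|L|^{-\alpha/2}$ (up to $\gamma$ and logarithmic factors) and bounded sign sums.
\item These are contradicted by Proposition~\ref{prop:spansofcomp}. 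Its proof uses the approximation $A_T^\top A\beta\approx n\beta_T$ on the large-coordinate sets $T^\sigma(\tau)$, the exceptional set $J(t,{\bf F})$, Kahane--Khintchine, and the relative inradius $\approx\sqrt{n/|J|}$ of $\conv\{\pm X_j:j\in J\}$.
\end{itemize}
None of these ingredients appears in your plan.

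\textbf{The random inputs.} Your list is insufficient and partly misdirected. The argument needs two-sided near-isometric restricted isometry $(1\pm\eta_0)\sqrt n$ on all supports of size at most $c_*n/\ln^2(e\gamma)$. This gives kernel incompressibility and the sub-polytope inradius. Crucially, it also needs the event of Lemma~\ref{lem:approxlemma}: $\|\frac1nA_T^\top A_T-I\|\ls k^{-1/13}(\ln k)^{-1/2}$ for all $|T|\ls nk^{-1/2}$ and $k_0\ls k\ls n/2$.

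This event is where $e^{-cn^{1/4}}$ comes from. At sparsity $m_k\approx nk^{-1/2}$, the tail $\exp(-c\sqrt{m_k}\ln(\cdot))$ of Theorem~\ref{thm:rip} is only $\exp(-cn^{1/4})$ when $k\approx n$. The column-length term requires Gu\'edon--Milman at accuracy $\theta_k\approx k^{-1/13}$. A Paouris tail plus a union bound over columns gives $\gamma n e^{-c\sqrt n}$, so your explanation of the exponent is not correct.

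Your item (3) has the duality backwards. A lower bound on $\|A^\top\theta\|_1$ (or on $s_{\min}(A^\top)$) bounds $h_P(\theta)=\|A^\top\theta\|_\infty$ from below. That gives $r(P)\gr c$, i.e.\ the upper bound $\|x\|_P\ls C\|x\|_2$, which is exactly how the paper uses it.

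The Grassmannian-net step you worry about is unnecessary. The paper replaces it by the deterministic counting bound of Lemma~\ref{lem:projection_counting}, which uses only $\|A\|\ls C\sqrt N$.

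Finally, the deterministic lemmas require $k\ls n/2$. The range $n/2<k\ls n$ needs a separate step: restrict an almost optimal isomorphism to a coordinate subspace of dimension $\lfloor n/2\rfloor$.
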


For every fixed $\gamma\gr\gamma_0$ the probability in Theorem~\ref{thm:intro_sections} tends to one as $n\to\infty$.
The exponent $\alpha$ is absolute and the dependence on the aspect ratio is polynomial. Since
$\BM(E,\ell_\infty^k)\gr1$ for every $k$-dimensional normed space $E$,
Theorem~\ref{thm:intro_sections} improves on the trivial estimate whenever
$$k>c^{-1/\alpha}\gamma^{C/\alpha}.$$
In particular, if $\sqrt n\ls k\ls n$, then
$$\BM(E,\ell_\infty^k)\gr c\gamma^{-C}n^{\alpha/2},$$
so the estimate is non-trivial uniformly throughout this range whenever
$$\gamma\ls c_0 n^{\alpha/(2C)},$$
where $c_0>0$ is an absolute constant.

We next compare our results with those of Huang and Tikhomirov.
In \cite{HT26} the random vectors are standard Gaussian and the aspect ratio
is assumed to satisfy
$$K\ls \frac{N}{n}\ls K'$$
for fixed numbers $1<K<K'$.
In contrast, we work with arbitrary isotropic log-concave measures and obtain
estimates for all $\gamma\gr\gamma_0$, without assuming symmetry, product
structure, or rotational invariance and without imposing an a priori upper
bound on $\gamma$.

Furthermore, Huang and Tikhomirov obtain probability at least
$1-C(K,K')/n$, with constants depending implicitly on $K$ and $K'$,
whereas our probability estimate is
$$1-C\gamma\exp(-c n^{1/4}).$$
We also make the dependence on $\gamma$ explicit, obtaining polynomial bounds
in the exclusion estimate together with quantitative choices of the cotype
exponent and cotype constant.

At the level of the proof the deterministic coefficient scheme of \cite{HT26} is retained but all Gaussian inputs are replaced. 
Sparse singular value and full matrix estimates are obtained from results for isotropic log-concave random matrices together with the norm-deviation theorem of Gu\'edon and Milman. 
These substitutions lead to the probability exponent $n^{1/4}$ and to the polynomial losses in $\gamma$.

Beyond the finite-dimensional setting, our results also have an
infinite-dimensional consequence. For a Banach space ${\bf X}$ and $k\in\N$ set $D_{\bf X}(k):=\sup\{\BM(E,F):E,F\subset{\bf X},\ \dim E=\dim F=k\}$. 
John's theorem gives $D_{\bf X}(k)\ls k$ for every Banach space ${\bf X}$ and Gluskin's theorem shows that this order is optimal in general. 
By the Maurey--Pisier theorem \cite{MP76}, every Banach space of infinite cotype contains the spaces $\ell_\infty^m$ uniformly. 
Since every finite-dimensional normed space embeds almost isometrically into some $\ell_\infty^m$, every such space satisfies $D_{\bf X}(k)=\Theta(k)$. 
Huang and Tikhomirov \cite{HT26} proved that there also exists a Banach space of finite cotype with this property. 
Combining Theorem~\ref{thm:intro_cotype} with the log-concave extension of Gluskin's theorem \cite{GH26} we obtain the following realization of their construction.

\begin{theorem}\label{thm:intro_theoremC}
There exists a separable Banach space ${\bf X}$ of finite cotype such that $D_{\bf X}(k)=\Theta(k)$ as $k\to\infty$. 
Moreover, the finite-dimensional building blocks may be chosen to be normed spaces generated by centrally symmetric random polytopes associated with isotropic log-concave measures.
\end{theorem}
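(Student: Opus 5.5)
The plan is to realize ${\bf X}$ as an $\ell_2$-sum ${\bf X}=\left(\sum_{m}\oplus X_m\right)_{\ell_2}$ of finite-dimensional random polytope spaces $X_m=X_{N_m,n_m}^{\mu_m}$ with a fixed aspect ratio $\gamma=N_m/n_m\gr\gamma_0$ (say $N_m=\lceil\gamma_0\rceil n_m$) and dimensions $n_m\to\infty$ growing fast, e.g.\ $n_m=2^m$. Here $\mu_m$ is any isotropic log-concave measure on $\R^{n_m}$. Such a space is separable. Since $\gamma$ is fixed and $\sum_m C\gamma\exp(-cn_m^{1/4})<1$ once $n_m$ starts large enough, Theorem~\ref{thm:intro_cotype} lets us choose, simultaneously for all $m$, realizations of $X_m$ that have cotype $q(\gamma)$ with constant at most $C_A(\gamma)$. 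This is an existence statement, so it suffices that each event has positive probability. The same choice should also satisfy the log-concave Gluskin estimate of \cite{GH26} on each block; we intersect the two events, and the Gluskin event also holds with probability tending to one.

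The first key step is cotype of the sum. An $\ell_2$-sum of spaces with uniformly bounded cotype-$q$ constants, $q\gr2$, has cotype $q$ with constant bounded by a fixed multiple of the uniform constant. The argument applies Kahane's inequality, then Minkowski in $L_{q/2}$ (or the standard fact that $\ell_2(Y_m)$ inherits cotype $\max(2,q)$), and it is routine. Hence ${\bf X}$ has finite cotype $q(\gamma)$.

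For the lower bound $D_{\bf X}(k)\gr ck$, I would use the log-concave Gluskin theorem from \cite{GH26}. It should state that, with high probability, for every $1\ls k\ls n$ (or at least for $k$ proportional to $n$), two independent random $k$-dimensional spaces of this type have Banach--Mazur distance at least $ck$. Alternatively it gives two $k$-dimensional subspaces inside $X_{N,n}^\mu$ at distance $\gr ck$. The main obstacle is matching the form of the cited result to our needs: we need, for every $k$, two $k$-dimensional subspaces of ${\bf X}$ at distance $\gtrsim k$. If \cite{GH26} only gives this for $k=n$ with two independent copies, I would take blocks in pairs $X_m, X'_m$ of equal dimension $n_m$, both in the sum, and use that $X_m$ and $X'_m$ are subspaces of ${\bf X}$ with $\BM(X_m,X'_m)\gr cn_m$. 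For general $k$ with $n_m\ls k<n_{m+1}=2n_m$, I would take $E=X_m\oplus_2 G$ and $F=X'_m\oplus_2 G$ for a common $(k-n_m)$-dimensional piece $G$, taken from a Euclidean-like or common subspace. I would then argue that $\BM(E,F)\gtrsim n_m\gr k/2$, since $d(E,F)$ controls the distance between the large blocks up to constants via projection onto the $\ell_2$-components. This last step is delicate, so a cleaner alternative is to pick $n_m$ to include every integer dimension, e.g.\ blocks for all $n\gr n_0$ with cotype uniform. Then $D_{\bf X}(k)\gr ck$ for all $k\gr n_0$ directly, and dimensions below $n_0$ only affect constants.

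Finally, the upper bound $D_{\bf X}(k)\ls k$ follows from John's theorem, giving $D_{\bf X}(k)=\Theta(k)$. The building blocks are by construction spaces $X_{N,n}^{\mu}$ generated by centrally symmetric random polytopes, which establishes the ``moreover'' part.
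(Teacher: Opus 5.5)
Your final construction is essentially the paper's proof: pairs of independent copies $X_n,X_n'$ in every dimension $n\gr n_0$, each pair realized so that both have cotype $q(\gamma)$ with constant $\ls C_A(\gamma)$ and $\BM(X_n,X_n')\gr cn$, then John's theorem for $D_{\bf X}(k)\ls k$. The one real difference is the sum: the paper takes an $\ell_q$-sum with $q=q(\gamma)$, so cotype of ${\bf X}$ follows from Tonelli in one line, whereas your $\ell_2$-sum needs Kahane's inequality plus Minkowski in $\ell_{q/2}$. That route is correct but costs a $q$-dependent factor in the constant.

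Two small corrections are needed. First, the aspect ratio must be the $\gamma=C_{\mathrm{Gl}}\gr\gamma_0$ supplied by the log-concave Gluskin proposition, not $\lceil\gamma_0\rceil$, since the separation is only guaranteed at that ratio. Second, that proposition gives probability only $\gr 1-\varepsilon_{\mathrm{Gl}}$, not probability tending to one. So choose each pair separately, intersecting with the cotype events, which is possible for large $n$, rather than through a union bound over all blocks.
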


The existence assertion is due to Huang and Tikhomirov; the additional point
here is the realization by isotropic log-concave random-polytope spaces.

\subsection{Elements of the proof}

Let $A:\R^N\to\R^n$ be the matrix with columns $X_1,\ldots,X_N$. 
The proof of Theorem~\ref{thm:intro_sections} is carried out on a high-probability event
$$\Omega=\Omega_{\mathrm{sv}}\cap\Omega_{\mathrm{app}}.$$
The event $\Omega_{\mathrm{sv}}$ controls the operator norm of $A$, the lower
singular value of $A^\top$, and the singular values of sparse column
submatrices. The event $\Omega_{\mathrm{app}}$ provides an approximation of
$A_T^\top A\beta$ by $n\beta_T$ on sparse sets
$T\subset[N]$.
These estimates are obtained from results on isotropic log-concave random
matrices together with the norm-deviation theorem of Gu\'edon and Milman
\cite{GM11}.

On $\Omega$ we follow the deterministic coefficient scheme introduced in
\cite{HT26}. Assuming that a $k$-dimensional subspace is close to
$\ell_\infty^k$, we select vectors
$y_1,\ldots,y_k$ realizing an almost isometric embedding.
For each sign vector $\sigma\in\{-1,1\}^k$ we write
$$Y_\sigma=\sum_{i=1}^k \sigma_i y_i=A\beta^\sigma,$$
where $\beta^\sigma$ has minimal $\ell_1$-norm among all solutions of
$A\beta=Y_\sigma$.

The principal difficulty is that the assignment
$Y_\sigma\mapsto\beta^\sigma$ is nonlinear whenever $N>n$.
Consequently, one cannot replace $\beta^\sigma$ by
$\sum_i \sigma_i\beta(y_i)$.

The large coordinates of $\beta^\sigma$ are detected through the identity
$$\langle X_j,Y_\sigma\rangle=(A^\top A\beta^\sigma)_j.$$
Using the approximation property of $\Omega_{\mathrm{app}}$, this quantity is
close to $n\beta_j^\sigma$ on suitable sparse sets.
Following \cite{HT26}, we collect exceptional indices into a set
$J(t,{\bf F})$ and consider the span of the corresponding columns.
For many sign vectors $\sigma$, the projection of $Y_\sigma$ onto the
orthogonal complement of this span is small. The Kahane--Khintchine inequality
then yields a vector $y_i$ whose projection onto that complement is small,
while the component lying in the exceptional span is controlled by the
relative inradius of the associated subpolytope. Combining these estimates
contradicts the existence of a low-distortion embedding of $\ell_\infty^k$.

The paper is organized as follows. 
Section~2 collects the notation and the probabilistic and Banach-space preliminaries. 
Section~3 constructs the high-probability event $\Omega$. 
Section~4 develops the deterministic argument excluding $\ell_\infty^k$-subspaces. 
Section~5 proves Theorems~\ref{thm:intro_cotype}, \ref{thm:intro_sections} and \ref{thm:intro_theoremC}.

\section{Background}

\subsection{Notation and definitions}

We work in $\R^n$ equipped with the standard inner product $\langle\cdot,\cdot\rangle$ and the Euclidean norm $\|\cdot\|_2$. 
The Euclidean unit ball is denoted by $B_2^n$ and the Euclidean unit sphere by $S^{n-1}$. 
For $1\ls p\ls\infty$, we denote by $\ell_p^n$ the space $\R^n$ equipped with its usual $\ell_p$ norm and by $B_p^n$ its unit ball.
In particular,
$$B_1^n=\left\{x\in\R^n:\ \sum_{i=1}^n |x_i|\ls1\right\}, \qquad B_\infty^n=[-1,1]^n.$$

For $N\in\N$ we write $[N]:=\{1,\dots,N\}$. 
If $v\in\R^N$ and $J\subset[N]$ we write $v_J$ for the vector obtained by restricting $v$ to the coordinates in $J$.
If $A$ is an $n\times N$ matrix, then $A_J$ denotes the submatrix of $A$ whose columns are indexed by $J$.
If $T:\R^N\to\R^n$ is linear, $\|T\|=\|T\|_{\mathrm{op}}$ denotes its operator norm and $\|T\|_{\mathrm{HS}}$ its Hilbert--Schmidt norm.
We write $s_{\min}(T)$ and $s_{\max}(T)$ for the smallest and largest singular values of $T$.
For a symmetric matrix $M$, $\lambda_{\min}(M)$ and $\lambda_{\max}(M)$ denote its smallest and largest eigenvalues.

If $K$ is a measurable subset of a $d$-dimensional affine subspace, $|K|$ denotes its $d$-dimensional Lebesgue measure.
The letters $C,c,C_1,c_1,\dots$ denote positive absolute constants, allowed to change from line to line. 
The constant $\gamma_0>1$ will denote a sufficiently large absolute constant fixed once and for all after a prescribed finite number of possible enlargements.
For positive functions $f,g$ on $\N$, we write $f(k)=\Theta(g(k))$ as $k\to\infty$ if there are constants $c,C>0$ and $k_*\in\N$ such that
$$cg(k)\ls f(k)\ls Cg(k),\qquad k\gr k_*.$$

For a subspace $F\subset\R^n$, $\Proj_F$ denotes the orthogonal projection onto $F$.
We write $G_{n,d}$ for the Grassmannian of $d$-dimensional subspaces of $\R^n$.

A convex body in $\R^n$ is a compact convex set with non-empty interior. 
A convex body $K$ is centrally symmetric if $K=-K$. 
If $K\subset\R^n$ is a centrally symmetric convex body with non-empty interior, we write
$$\|x\|_K:=\inf\{t>0:\ x\in tK\},\qquad x\in\R^n,$$
for its Minkowski functional. 
Thus $(\R^n,\|\cdot\|_K)$ is a normed space with unit ball $K$. 
The support function of $K$ is
$$h_K(y):=\max\{\langle x,y\rangle:\ x\in K\},\qquad y\in\R^n.$$
The Euclidean in-radius of $K$ is
$$r(K):=\sup\{r>0:\ rB_2^n\subset K\}=\min_{\theta\in S^{n-1}}h_K(\theta).$$
More generally, if $K$ is a centrally symmetric convex body contained in a subspace $F\subset\R^n$ we use $r(K)$ for the largest $r>0$ such that $r(B_2^n\cap F)\subset K$, this is the relative in-radius of $K$ in $F$.

If $E$ and $F$ are finite-dimensional normed spaces of the same dimension, their Banach--Mazur distance is
$$\BM(E,F):=\inf\{\|T:E\to F\|\|T^{-1}:F\to E\|:\ T:E\to F\hbox{ is a linear isomorphism}\}.$$
If $D\gr1$, we say that a finite-dimensional normed space $F$ is $D$-isomorphic to a subspace of a Banach space ${\bf X}$ if there exists a subspace $E\subset{\bf X}$ with $\dim E=\dim F$ and $\BM(E,F)\ls D$.

If $\sigma=(\sigma_1,\dots,\sigma_k)$ is uniformly distributed on $\{-1,1\}^k$, we refer to $\sigma_i$ as independent Rademacher signs and write $\Exp_\sigma$ and $\Prob_\sigma$ 
for the expectation and probability with respect to these signs. If $E$ is an event or a set, ${\bf 1}_E$ denotes its indicator.

A Borel probability measure $\mu$ on $\R^n$ is called log-concave if
$$\mu(\lambda K+(1-\lambda)L)\gr \mu(K)^\lambda\mu(L)^{1-\lambda}$$
for all non-empty compact sets $K,L\subset\R^n$ and all $\lambda\in(0,1)$. 
If $\mu$ is log-concave and not supported on a proper affine subspace then it has a density of the form $\exp(-V)$ where $V:\R^n\to(-\infty,\infty]$ is convex.
We say that $\mu$ is isotropic if, for a random vector $X$ with distribution $\mu$,
$$\Exp X=0,\qquad \Exp XX^\top=I_n.$$

Throughout the paper, $X_1,\dots,X_N$ will denote independent random vectors with common isotropic log-concave distribution $\mu$ on $\R^n$ and
$$A:=\begin{bmatrix}X_1&\cdots&X_N\end{bmatrix}$$
will be the associated $n\times N$ random matrix. 
The random centrally symmetric polytope associated with these vectors is
$$P_{N,n}^{\mu}:=\conv\{\pm X_i:\ 1\ls i\ls N\}.$$
When no confusion is possible we write simply $P_{N,n}$ for $P_{N,n}^{\mu}$. 
The corresponding random normed space is
$$(\R^n,\|\cdot\|_{P_{N,n}^{\mu}}).$$

\subsection{Isotropic log-concave estimates}

We first collect the probabilistic estimates for isotropic log-concave random vectors which will replace the Gaussian input in \cite{HT26}. 
They will be used in Section~3 to construct a high-probability event on which the random matrix $A$ has the Euclidean structure needed for the deterministic argument.

The first two estimates control respectively the operator norm of the full matrix and the restricted isometry constants of its coordinate submatrices.

\begin{theorem}\label{thm:op_norm}{\rm \cite[Corollary~3.8]{ALPTJ10}.}
There exist absolute constants $C,c>0$ such that the following holds.
Let $X_1,\dots,X_N$ be independent isotropic log-concave random vectors in $\R^n$ and let
$A=\begin{bmatrix}X_1&\cdots&X_N\end{bmatrix}$.
If $N\ls \exp(\sqrt n)$, then
$$\Prob\{\|A\|>C(\sqrt n+\sqrt N)\}\ls \exp(-c\sqrt n).$$
\end{theorem}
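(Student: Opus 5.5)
We need to prove the cited operator norm bound for isotropic log-concave matrices. The statement is quoted from ALPTJ10 and in the paper it is cited rather than proved, so the plan is to sketch how one proves it.

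The plan is to reduce to the row-wise/column-wise structure. Note $\|A\|=\|A^\top\|$, and $A^\top$ is an $N\times n$ matrix whose rows $X_1^\top,\dots,X_N^\top$ are independent isotropic log-concave vectors. We have
$$\|A\|^2=\sup_{\theta\in S^{n-1}}\sum_{i=1}^N\langle X_i,\theta\rangle^2 .$$
We split the sum according to the largest coordinates. For a fixed $\theta$ and a threshold $m\ls N$, write $\sum_i\langle X_i,\theta\rangle^2$ as the contribution of the $m$ largest values of $|\langle X_i,\theta\rangle|$ plus the rest. This gives
$$\|A\|\ls \sup_{\theta}\Bigl(\sum_{i}\langle X_i,\theta\rangle^2{\bf 1}_{\{|\langle X_i,\theta\rangle|\ls t\}}\Bigr)^{1/2}+\sup_{|I|\ls m}\|A_I\| ,$$
where $m$ bounds the number of indices with $|\langle X_i,\theta\rangle|>t$.

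\emph{Step 1 (Euclidean norms).} By Paouris' theorem, $\Prob\{\|X_i\|_2>C\sqrt n\,s\}\ls\exp(-cs\sqrt n)$ for $s\gr 1$. A union bound over $i\ls N\ls\exp(\sqrt n)$ gives $\max_i\|X_i\|_2\ls C\sqrt n$ with probability at least $1-\exp(-c\sqrt n)$. This is where the restriction $N\ls \exp(\sqrt n)$ enters.

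\emph{Step 2 (sparse submatrices).} On this event, for any $I$ with $|I|=m$, write
$$\|A_I\|^2\ls \max_{i}\|X_i\|_2^2+\sup_{\theta}\Bigl|\sum_{i\ne j\in I}\theta_i\theta_j\langle X_i,X_j\rangle\Bigr|.$$
The off-diagonal terms are controlled by a decoupling/chaining argument. For fixed $x$ and independent $X_j$, the variable $\langle x,X_j\rangle$ is $\psi_1$ with parameter $\|x\|_2$, so Bernstein-type bounds with a union bound over nets of $m$-sparse vectors (cardinality $\binom{N}{m}(C)^m$) give $\|A_I\|\ls C(\sqrt n+\sqrt m\log(eN/m))$ uniformly in $I$. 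Choosing $m$ of order $n/\log^2(eN/n)$ makes this $C\sqrt n$.

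\emph{Step 3 (bounded part and levels).} The truncated part is handled by a net over $S^{n-1}$ and Bernstein's inequality for the bounded, subexponential variables $\langle X_i,\theta\rangle^2{\bf 1}_{\{\cdot\ls t\}}$. Each has mean at most $1$, so the sum concentrates around at most $N$ with deviation $\lesssim\sqrt{Nn}\,t+nt^2$. The number of large coordinates is controlled by iterating Step 2 over dyadic levels: if $|\langle X_i,\theta\rangle|>2^\ell t$ for $|I|$ indices, then $\|A_I\|\gr 2^\ell t\sqrt{|I|}$, which combined with Step 2 forces $|I|$ small. Summing the dyadic levels gives a total of $C(\sqrt n+\sqrt N)$.

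\emph{Main obstacle.} The main obstacle is Step 2, the uniform control of sparse submatrices. Log-concave marginals are only $\psi_1$ rather than $\psi_2$, which costs logarithmic factors. Removing those factors requires the careful level-by-level counting of ALPTJ, in which the probability for level $\ell$ is compared with the entropy $\binom{N}{m_\ell}$. I would follow their argument here rather than reprove it, so in the paper it suffices to cite \cite[Corollary~3.8]{ALPTJ10}.
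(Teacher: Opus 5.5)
The paper gives no proof of this statement: it is imported from \cite[Corollary~3.8]{ALPTJ10}, and your final recommendation to cite it agrees. As an argument, however, your sketch has a genuine gap at Step~2, which carries the whole difficulty, and the justification you give for it does not work.

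After decoupling, fix $Y=\sum_{i\in S}x_iX_i$. The variable $\langle Y,\sum_{j\notin S}x_jX_j\rangle$ has only an exponential tail, and Bernstein's subexponential term sits at scale $\|Y\|_2\|x\|_\infty$. A union bound over a net of cardinality $\binom{N}{m}C^m\approx\exp(Cm\ln(eN/m))$ therefore forces a deviation of order $\|Y\|_2\,m\ln(eN/m)$ on coefficient vectors that have a coordinate of order one. Since $\|Y\|_2\ls A_m:=\sup_{|I|\ls m}\|A_I\|$, the resulting inequality $A_m^2\lesssim n+A_m\,m\ln(eN/m)$ only gives $A_m\lesssim\sqrt n+m\ln(eN/m)$. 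So the loss is a factor $\sqrt m$, not a logarithm as you state.

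With this weaker bound Step~3 fails too. The inequality $t\sqrt{|I|}\ls C(\sqrt n+|I|\ln(eN/|I|))$ holds automatically once $\sqrt{|I|}\ln(eN/|I|)\gtrsim t$. And $t$ cannot exceed order $1+\sqrt{N/n}$ without $nt^2$ swamping the truncated part. Hence you get no control on the number of large coordinates.

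The missing ingredient is the ALPTJ estimate $A_m\lesssim\sqrt n+\sqrt m\ln(eN/m)$. Besides Paouris' bound for the diagonal, its proof needs a decomposition of $x$ according to the size of its coordinates, so that the entropy of each piece is paid for by the tail available at its own scale. That is exactly what you would be citing.

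Granting that estimate at the single level $m_0\simeq n/\ln^2(eN/n)$ (for $N\gr n$), your Steps~1 and~3 do close, and more simply than you describe. Work at the points $\theta'$ of a $\frac12$-net of $S^{n-1}$; the truncated functional is not Lipschitz, so use $\|A\|\ls2\sup_{\theta'}\|A^\top\theta'\|_2$. Take a single threshold $t=C'\ln(eN/n)$ with $C'$ large.
\begin{itemize}
\item If at least $m_0$ indices had $|\langle X_i,\theta'\rangle|>t$, a subset $I$ of size $m_0$ would give $t\sqrt{m_0}\ls\|A_I\|\lesssim\sqrt n$, which is impossible. So the large part is at most $A_{m_0}^2\lesssim n$.
\item Bernstein and the union bound give $N+C\sqrt{Nn}+Cnt^2\lesssim N$ for the truncated part.
\end{itemize}
No dyadic summation is needed. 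In fact, summing per-level bounds, each of order $n$, would cost a factor $\ln n$.

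Note also that the sparse input at level $m_0$ is precisely what Theorems~\ref{thm:rip} and~\ref{thm:GM_norm_deviation} provide in the proof of Proposition~\ref{prop:sparse_singular_values} (for $N\ls\exp(c\sqrt n)$), independently of Theorem~\ref{thm:op_norm}. Your reduction would thus derive the operator-norm bound from the restricted isometry input the paper already imports. That replaces one ALPTJ citation by another rather than removing it.
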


For an $n\times N$ matrix $M$ and $1\ls m\ls N$, the $m$-th restricted isometry constant of $M$ is the smallest number $\delta_m(M)\gr0$ such that
$$(1-\delta_m(M))\|z\|_2^2\ls \|Mz\|_2^2\ls (1+\delta_m(M))\|z\|_2^2$$
for every vector $z\in\R^N$ supported on at most $m$ coordinates.
Equivalently,
$$\delta_m(M)=\sup_{\substack{J\subset[N]\\ |J|\ls m}} \left\|M_J^\top M_J-I\right\|.$$

\begin{theorem}\label{thm:rip}{\rm \cite[Theorem~3.3 and Lemmas~2.5, 2.7]{ALPTJ11}.}
There exist absolute constants $C,c>0$ such that the following holds.
Let $X_1,\dots,X_N$ be independent isotropic log-concave random vectors in $\R^n$ and let $A=\begin{bmatrix}X_1&\cdots&X_N\end{bmatrix}$.
Assume that $N\ls \exp(\sqrt n)$. 
Then for every $1\ls m\ls n$ and every $\theta\in(0,1)$,
\begin{align*}
\Prob\left\{\delta_m(A/\sqrt n)> C\sqrt{\frac mn} \ln\left(\frac{eN}{m\sqrt{m/n}}\right)+\theta\right\}
&\ls C\exp\left(-c\sqrt m\ln\left(\frac{eN}{m\sqrt{m/n}}\right)\right)\\
&\quad +C\exp(-c\sqrt n) +\Prob\left\{\max_{j\ls N}\left|\frac{\|X_j\|_2^2}{n}-1\right|>\theta\right\}.
\end{align*}
\end{theorem}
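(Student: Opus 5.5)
The quantity to control is
$$\delta_m(A/\sqrt n)=\sup_{z\in U_m}\left|\frac{\|Az\|_2^2}{n}-1\right|,\qquad U_m:=\{z\in S^{N-1}:|{\rm supp}\,z|\ls m\}.$$
We would split it into a diagonal and an off-diagonal part:
$$\|Az\|_2^2-n=\sum_j z_j^2\left(\|X_j\|_2^2-n\right)+\sum_{i\neq j}z_iz_j\langle X_i,X_j\rangle .$$
The diagonal part is at most $n\max_j|\|X_j\|_2^2/n-1|$. This is the source of the last term in the probability bound: on its complement it contributes at most $\theta$. So it remains to show that, with probability at least $1-C\exp(-c\sqrt m\ln(\cdot))-C\exp(-c\sqrt n)$,
$$\sup_{z\in U_m}\Bigl|\sum_{i\neq j}z_iz_j\langle X_i,X_j\rangle\Bigr|\ls C\sqrt{mn}\,\ln\Bigl(\frac{eN}{m\sqrt{m/n}}\Bigr).$$

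\emph{Step 1: uniform sparse norm bound.} We first bound
$$A_m:=\sup_{z\in U_m}\|Az\|_2 .$$
The plan is to follow the strategy of \cite{ALPTJ10}. Paouris' inequality gives $\Prob\{\|X_j\|_2>Ct\sqrt n\}\ls e^{-t\sqrt n}$, and $\langle X_j,\theta\rangle$ has $\psi_1$ tails. We would condition on the event $\max_j\|X_j\|_2\ls C\sqrt n$, which fails with probability at most $N e^{-c\sqrt n}$; this is absorbed into $C\exp(-c\sqrt n)$ because $N\ls\exp(\sqrt n)$ after adjusting constants. Then we would use a net argument over supports $J$ of size $m$ together with a bound on $\sup_{\theta}\sum_{j\in J}\langle X_j,\theta\rangle^2$ via order statistics of $\psi_1$ variables. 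The target is
$$A_m\ls C\Bigl(\sqrt n+\sqrt m\ln\frac{eN}{m}\Bigr),$$
with failure probability $\exp(-c\sqrt m\ln(eN/m))+\exp(-c\sqrt n)$. This is the content attributed to Lemma~2.5.

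\emph{Step 2: off-diagonal term by decoupling.} Write the off-diagonal sum as an average over random partitions $[N]=I\cup I^c$, so it is bounded by $4\max_I$ of $\sup_z|\langle A_Iz_I,A_{I^c}z_{I^c}\rangle|$. For fixed $I$ and fixed sparse $z_{I^c}$, we condition on $Y:=A_{I^c}z_{I^c}$. Its norm is at most $A_m$ by Step~1. Then each $\langle X_i,Y\rangle$ for $i\in I$ is a $\psi_1$ variable with parameter $\|Y\|_2$. Consequently
$$\sup_{z_I}\sum_{i\in I}z_i\langle X_i,Y\rangle=\Bigl(\sum_{\text{top }m}\langle X_i,Y\rangle^2\Bigr)^{1/2}\ls C\|Y\|_2\sqrt m\ln\frac{eN}{m}$$
with high probability. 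A net over $z_{I^c}$ is then needed to make this uniform. This is Lemma~2.7 in spirit; the number of partitions is controlled by choosing them from a small family rather than all $2^N$.

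\emph{Step 3: combine.} Combining the previous steps gives an off-diagonal bound of order
$$A_m\sqrt m\ln(\cdot)\ls C\Bigl(\sqrt n+\sqrt m\ln(\cdot)\Bigr)\sqrt m\ln(\cdot).$$
Dividing by $n$ gives a bound of order $\sqrt{m/n}\ln(\cdot)+(m/n)\ln^2(\cdot)$. We then need the refinement with $\ln\bigl(eN/(m\sqrt{m/n})\bigr)$, which absorbs the second term whenever the bound is below $1$ and is trivial otherwise.

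\emph{Main obstacle.} We expect the main obstacle to be the uniformity in Step~2. The net over $z_{I^c}$ has cardinality about $\binom{N}{m}9^m$, whereas the $\psi_1$ tail only yields probability $\exp(-c\sqrt m\ln(\cdot))$, not $\exp(-cm\ln(\cdot))$. We would try to handle this as in \cite{ALPTJ11}: union-bound only over supports, and for the top-$m$ order statistic use the estimate that the $k$-th largest $|\langle X_i,Y\rangle|$ exceeds $t\ln(eN/k)\|Y\|_2$ with probability $\exp(-ctk\ln(eN/k))$. Summing over dyadic $k$ then keeps the entropy under control, leaving the weaker $\sqrt m$ in the exponent from the small-$k$ levels.
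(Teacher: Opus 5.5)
\textbf{Gap in Step 2.} Your outline (diagonal/off-diagonal split, decoupling, conditioning on one side, $\psi_1$ order statistics, with Paouris' bound and $N\ls e^{\sqrt n}$ producing the $Ce^{-c\sqrt n}$ term) is essentially the architecture behind \cite{ALPTJ11}, which the paper only cites. But the decisive uniform estimate is not proved, and the remedy you sketch does not work.

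Conditioning fixes $Y=A_{I^c}z_{I^c}$. This depends on the coefficients of $z_{I^c}$, not only on its support, so any union bound must run over a net of entropy of order $m\ln(eN/m)$. For fixed $Y$, put $\xi_i:=\langle X_i,Y\rangle/\|Y\|_2$. The probability that $(\sum_{k\ls m}(\xi_k^*)^2)^{1/2}$ exceeds $t\sqrt m\ln(eN/m)$ can be as large as $\exp(-Ct\sqrt m\ln(eN/m))$, since a single large coordinate suffices. Your level-$k$ bound $\xi_k^*\ls t\ln(eN/k)$ fails with probability $\exp(-ctk\ln(eN/k))$. Paying for the net at level $k$ therefore forces $t\gtrsim m\ln(eN/m)/(k\ln(eN/k))$, i.e.\ only $\xi_k^*\lesssim m\ln(eN/m)/k$. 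This inflates the top-$m$ $\ell_2$-norm from $\sqrt m\ln(eN/m)$ to $m\ln(eN/m)$, and summing over dyadic $k$ for a fixed $Y$ cannot repair it. Your sketch of Step~1 (nets plus order statistics in a fixed direction) meets the same obstruction.

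The missing idea is to match the entropy of the net to the deviation scale:
\begin{itemize}
\item Split $z$ along its decreasing rearrangement into blocks $B_s$ of size $2^s$, so that $\|z_{B_s}\|_\infty\ls 2^{-s/2}$.
\item Pair block $s$ on one side with the cumulative vector $\sum_{s'\ls s}z_{B_{s'}}$ on the other side; it has support at most $2^{s+1}$ and norm at most $1$.
\item Put the net only on the cumulative vector, whose entropy is about $2^s\ln(eN/2^s)$.
\item Bound the block by $2^{-s/2}$ times the $\ell_1$-sum of the $2^s$ largest $|\langle X_i,Y\rangle|$. Its tail at level $t2^s\ln(eN/2^s)\|Y\|_2$ is $\exp(-ct2^s\ln(eN/2^s))$, which does beat that entropy.
\end{itemize}
Block $s$ then contributes $tA_m2^{s/2}\ln(eN/2^s)$, and these contributions sum to $O(tA_m\sqrt m\ln(eN/m))$. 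Enlarging $t$ for small $s$ is affordable because those blocks contribute little, and this is exactly what produces the failure probability $\exp(-c\sqrt m\ln(\cdot))$.

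\textbf{Error in Step 3.} The claim that the case $C\sqrt{m/n}\ln(\cdot)\gr 1$ ``is trivial'' is false: $\delta_m$ has no a priori bound, and the term $(m/n)\ln^2(eN/m)$ is genuinely present. Take i.i.d.\ Laplace coordinates of variance $1$ (an isotropic log-concave product measure), $m=n$ and $N=n^2\ls e^{\sqrt n}$.
\begin{itemize}
\item With probability $1-O(1/n)$, at least $n/2$ of the numbers $|X_{j1}|$ exceed $(\ln n)/\sqrt2$. Choosing $|J|=n$ containing $n/2$ of these indices gives $\delta_n(A/\sqrt n)\gr\frac1n\|A_J^\top e_1\|_2^2-1\gr\frac14\ln^2n-1$. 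For large $n$ this exceeds the threshold $C\ln(en)+\theta$.
\item The right-hand side of the stated bound tends to $0$; for the last term use Theorem~\ref{thm:GM_norm_deviation} and a union bound over $n^2$ columns.
\end{itemize}
So the displayed inequality can hold only in the regime $C\sqrt{m/n}\ln(eN/(m\sqrt{m/n}))\ls 1$, where the $(m/n)\ln^2$ term is indeed absorbed. This is the only regime in which the paper uses it (for $m_0$ and $m_k$). Your argument should be restricted to that regime rather than appeal to triviality.
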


The preceding estimate also involves the Euclidean lengths of the columns.
We control this quantity using the following norm deviation estimate.

\begin{theorem}\label{thm:GM_norm_deviation}{\rm \cite[Theorem~1.1]{GM11}.}
There exist absolute constants $C,c>0$ such that the following holds.
If $X$ is an isotropic log-concave random vector in $\R^n$, then for every $t\gr0$,
$$\Prob\left\{\left|\frac{\|X\|_2}{\sqrt n}-1\right|\gr t\right\} \ls C\exp\left(-c\sqrt n\,\min\{t^3,t\}\right).$$
Consequently, for every $\theta\in(0,1)$,
$$\Prob\left\{\left|\frac{\|X\|_2^2}{n}-1\right|>\theta\right\} \ls C\exp(-c\sqrt n\,\theta^3).$$
\end{theorem}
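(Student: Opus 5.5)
The statement to be justified is the ``Consequently'' part of Theorem~\ref{thm:GM_norm_deviation}. The first inequality is quoted from \cite{GM11}, so only the passage to the squared norm needs proof. I would argue by inclusion of events: show that, for every $\theta\in(0,1)$, the event $\{|\|X\|_2^2/n-1|>\theta\}$ is contained in $\{|\|X\|_2/\sqrt n-1|\gr t\}$ for a suitable $t\asymp\theta$, and then apply the first bound.

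Set $s:=\|X\|_2/\sqrt n\gr0$. Then $\|X\|_2^2/n-1=s^2-1=(s-1)(s+1)$. Suppose $|s-1|<\theta/3$. Since $\theta<1$, this gives $s+1<2+\theta/3<3$, and hence $|s^2-1|<(\theta/3)\cdot 3=\theta$. Taking the contrapositive, $|s^2-1|>\theta$ forces $|s-1|\gr\theta/3$. This gives the event inclusion with $t=\theta/3$, and the first inequality yields
$$\Prob\left\{\left|\frac{\|X\|_2^2}{n}-1\right|>\theta\right\}\ls C\exp\left(-c\sqrt n\,\min\{(\theta/3)^3,\theta/3\}\right).$$

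It remains to simplify the exponent. Because $\theta/3<1$, the minimum equals $(\theta/3)^3=\theta^3/27$. We therefore get the bound $C\exp(-(c/27)\sqrt n\,\theta^3)$, and absorbing the factor $1/27$ into the absolute constant $c$ gives the claim.

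There is no real obstacle here. The only point requiring care is to use $\theta<1$ twice: once to bound the factor $s+1$ by an absolute constant, and once to identify the minimum as the cubic term. The derivation works for every $\theta\in(0,1)$, not only for small $\theta$.
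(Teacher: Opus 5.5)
Your derivation is correct: the inclusion $\{|s^2-1|>\theta\}\subset\{|s-1|\gr\theta/3\}$ (from $1\ls s+1<3$), followed by $\min\{(\theta/3)^3,\theta/3\}=\theta^3/27$, is exactly the elementary step the paper leaves implicit when it states the squared-norm bound as an immediate consequence of \cite[Theorem~1.1]{GM11}. The paper gives no separate argument for this step, so your proposal supplies the intended deduction.
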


The next estimate provides lower singular-value control for the full rectangular matrix $A^\top$.

\begin{theorem}\label{thm:sharp_emp_cov}
{\rm \cite[Corollary~1]{ALPTJ11sharp}.}
There exist absolute constants $C,c>0$ such that the following holds.
Let $X_1,\dots,X_N$ be independent isotropic log-concave random vectors in $\R^n$, let
$A=\begin{bmatrix}X_1&\cdots&X_N\end{bmatrix}$ and assume $N\gr n$.
Then, with probability at least $1-C\exp(-c\sqrt n)$,
$$1-C\sqrt{\frac nN}\ls \frac{\lambda_{\min}(AA^\top)}{N} \ls \frac{\lambda_{\max}(AA^\top)}{N} \ls 1+C\sqrt{\frac nN}.$$
\end{theorem}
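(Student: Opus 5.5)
This result is quoted from \cite{ALPTJ11sharp}. The plan below reconstructs the proof along the lines of that paper. Write
$$\frac{AA^\top}{N}-I_n=\frac1N\sum_{i=1}^N\left(X_iX_i^\top-I_n\right),$$
so that it suffices to show, with the stated probability, that
$$\sup_{\theta\in S^{n-1}}\left|\frac1N\sum_{i=1}^N\left(\langle X_i,\theta\rangle^2-1\right)\right|\ls C\sqrt{\frac nN}.$$
Fix a truncation level $B>0$. For each $\theta$, split the indices into the \emph{bulk} $\{i:|\langle X_i,\theta\rangle|\ls B\}$ and the \emph{exceptional set} $J_\theta=\{i:|\langle X_i,\theta\rangle|>B\}$. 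The quadratic form then splits accordingly into a truncated sum and a sum over $J_\theta$. The lower bound on $\lambda_{\min}$ only needs the truncated part, since the exceptional part is nonnegative. The upper bound needs both parts.

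\emph{Bulk part.} For fixed $\theta$, the variables $\langle X_i,\theta\rangle$ are $\psi_1$ with absolute constant, by Borell's lemma. Hence the truncated squares are bounded by $B^2$ and have mean $1-O(e^{-cB})$. Bernstein's inequality then gives deviation $t$ with probability $\exp(-cNt^2/B^4)$. A single net on $S^{n-1}$ with $t\sim\sqrt{n/N}$ only works if $B$ is an absolute constant, and this is not enough to absorb the tails. I would therefore follow the chaining/decomposition of ALPT. Decompose $\langle X_i,\theta\rangle^2$ dyadically in levels $2^{j}$. At each level, estimate the number of indices at that level via binomial/Chernoff bounds, combined with nets whose fineness is adapted to the level. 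The sum over levels is summable because the $\psi_1$ tails are exponential.

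\emph{Exceptional part.} This is the main obstacle. One must bound $\sup_\theta\sum_{i\in J_\theta}\langle X_i,\theta\rangle^2$ by $C\sqrt{nN}$, so that after division by $N$ it is of the order $\sqrt{n/N}$.
\begin{itemize}
\item First I would show that $|J_\theta|\ls m$ uniformly in $\theta$, for some $m$ depending on $B$ and $N$.
\item Then the exceptional sum is at most $\sup_{|J|\ls m}\|A_J^\top\|^2$.
\item This supremum is controlled by combining Paouris' bound on $\max_i\|X_i\|_2$ (or Theorem~\ref{thm:GM_norm_deviation}) with the sparse estimates behind Theorem~\ref{thm:rip}.
\end{itemize}
A naive use of Theorem~\ref{thm:rip} loses a factor $\ln^2(eN/m)$. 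The sharp statement requires the refined argument of \cite{ALPTJ11sharp}. There one estimates $\sup_{|J|=m}\sup_\theta\|A_J^\top\theta\|_2$ by decoupling large coordinates level by level. One uses that, for fixed $\theta$, the $m$ largest values of $|\langle X_i,\theta\rangle|$ behave like order statistics of $\psi_1$ variables, with sum of squares of order $m\ln^2(eN/m)$. One then unions over nets of $\theta$ at scale depending on $m$. Choosing $m\sim n$ and balancing this against the bulk estimate yields the clean $C\sqrt{n/N}$ error.

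The failure probability comes from three sources. The Bernstein and Chernoff steps give terms like $e^{-cn}$ after the union over nets. The $\max_i\|X_i\|_2$ control and the operator-norm step (Theorem~\ref{thm:op_norm}) give $e^{-c\sqrt n}$. These combine to $1-C\exp(-c\sqrt n)$, as claimed.
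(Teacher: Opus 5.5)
The paper gives no proof of this statement; it imports it from \cite[Corollary~1]{ALPTJ11sharp}. Your sketch also hands its decisive step to that reference, so it is not a self-contained proof. The reasoning you give around that step also puts the difficulty in the wrong place.

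\emph{Bulk part.} You used the Hoeffding-type bound $\exp(-cNt^2/B^4)$ and concluded that a single net forces $B$ to be an absolute constant. Bernstein's inequality uses the variance: $\mathrm{Var}(\langle X_i,\theta\rangle^2\mathbf{1}_{\{|\langle X_i,\theta\rangle|\ls B\}})\ls \Exp\langle X_i,\theta\rangle^4\ls C$. This gives $\exp(-c\min\{Nt^2,Nt/B^2\})$. Take $t=K\sqrt{n/N}$ and $B=C_1\ln(eN/n)$. Since $\sqrt{N/n}/\ln^2(eN/n)$ is bounded below by an absolute constant, the exponent is at least $c\min\{K^2,K/C_1^2\}\,n$.

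So a single $1/4$-net $\mathcal N$ with $|\mathcal N|\ls 9^n$ suffices, provided you net \emph{before} truncating. Because $AA^\top-NI$ is symmetric, $\|AA^\top-NI\|\ls 2\max_{\theta\in\mathcal N}|\sum_i(\langle X_i,\theta\rangle^2-1)|$. The truncation is then done only at the finitely many fixed $\theta\in\mathcal N$. The truncated functional is not a quadratic form, so netting it directly through Lipschitz estimates would lose a logarithmic factor. The mean defect is $\sum_i\Exp\langle X_i,\theta\rangle^2\mathbf{1}_{\{|\langle X_i,\theta\rangle|>B\}}\ls CNe^{-cB}\ls\sqrt{nN}$ once $C_1$ is large. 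The level-by-level chaining you propose is therefore unnecessary, and as written it is not an argument.

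\emph{Exceptional part.} Contrary to your diagnosis, the logarithm does no harm here. Take $m=\lfloor c_2 n/\ln^2(eN/n)\rfloor$. Theorem~\ref{thm:rip} gives $\sup_{|J|\ls m}\|A_J\|^2\ls C_3 n$ with probability $1-C\exp(-c\sqrt n)$. If $|J_\theta|\gr m$ for some $\theta\in S^{n-1}$, choose $J\subset J_\theta$ with $|J|=m$; then $mB^2<\|A_J^\top\theta\|_2^2\ls C_3n$. This is impossible once $C_1^2>2C_3/c_2$. Hence $|J_\theta|<m$ for every $\theta$, and $\sum_{i\in J_\theta}\langle X_i,\theta\rangle^2\ls C_3n\ls C_3\sqrt{nN}$ uniformly in $\theta$. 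Combining the three pieces at the points of $\mathcal N$ gives $\|AA^\top-NI\|\ls C\sqrt{nN}$, which is both eigenvalue bounds.

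So no ``refined argument'' is needed at this step. The mechanism you attribute to it would not work on its own: fixed-$\theta$ order statistics plus a union over nets of $\theta$ and subsets $J$ does not give $\sup_{|J|=m}\|A_J\|\ls C(\sqrt n+\sqrt m\ln(eN/m))$. A plain union bound over the $\binom Nm$ sets and $e^{Cn}$ directions only gives a bound of order $n+m\ln(eN/m)$. The correct bound needs $\max_i\|X_i\|_2\ls C\sqrt n$ together with a decoupling of the $\langle X_i,X_j\rangle$, which is what Theorem~\ref{thm:rip} packages.

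Finally, routing everything through Theorems~\ref{thm:op_norm}, \ref{thm:rip} and a union bound on $\max_i\|X_i\|_2$ only covers $N\ls \exp(c\sqrt n)$. The statement is for all $N\gr n$. That range is enough for the paper's use, where $\gamma\ls\exp(\tilde c\sqrt n)$, but not for the statement as quoted.
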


We shall also use Paouris' large deviation estimate. 
In Section~3 it will be combined with a volumetric estimate in order to obtain an upper bound on the Euclidean inradius of $P_{N,n}^{\mu}$.

\begin{theorem}\label{thm:paouris_tail}{\rm \cite[Theorem~1.1]{Paouris06}.}
There exists an absolute constant $C>0$ such that if $X$ is an isotropic log-concave random vector in $\R^n$ then for every $t\gr1$
$$\Prob\{\|X\|_2\gr Ct\sqrt n\}\ls \exp(-t\sqrt n).$$
\end{theorem}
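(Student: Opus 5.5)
This is Paouris' theorem, quoted from \cite{Paouris06}; here is how I would reprove it. Write $I_q(\mu):=(\Exp\|X\|_2^q)^{1/q}$, so that $I_2(\mu)=\sqrt n$. The plan is to prove the moment bound
$$I_q(\mu)\ls C(\sqrt n+q)\qquad\text{for all } q\gr2,$$
and then apply Markov's inequality with $q=t\sqrt n$:
$$\Prob\{\|X\|_2\gr eC'q\}\ls \left(\frac{I_q}{eC'q}\right)^{q}\ls e^{-q}=\exp(-t\sqrt n).$$
This holds for $t\gr1$ once $C'$ is large enough.

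\textbf{The range $q\gr\sqrt n$.} This part is the easy one. The function $x\mapsto\|x\|_2$ is a seminorm, so Borell's lemma gives reverse H\"older inequalities for log-concave measures: $I_q\ls C(q/p)I_p$ for $q\gr p\gr1$. Taking $p=c\sqrt n$ reduces this range to the statement $I_{c\sqrt n}(\mu)\ls C\sqrt n$.

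\textbf{The main claim: $I_q(\mu)\ls C\sqrt n$ for $q\ls c\sqrt n$.} I would use the $L_q$-centroid bodies $Z_q(\mu)$, defined by $h_{Z_q(\mu)}(\theta)=(\Exp|\langle X,\theta\rangle|^q)^{1/q}$. The steps are as follows.
\begin{enumerate}
\item[(i)] Integrate in polar coordinates on the sphere and use the Gaussian-type identity $\Exp_\theta|\langle x,\theta\rangle|^q\asymp(\sqrt{q/n}\,\|x\|_2)^q$. This gives $I_q(\mu)\asymp\sqrt{n/q}\,w_q(Z_q(\mu))$, where $w_q$ is the $q$-th moment of the support function on $S^{n-1}$.
\item[(ii)] Define $q_*(\mu)$ as the largest $q$ such that $w_q(Z_q)$ is comparable to the mean width $w(Z_q)$ (a Dvoretzky-type dimension, via concentration on the sphere). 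Show that, for $q\ls q_*$, one also has $I_q\asymp I_{-q}$.
\item[(iii)] To get $I_q\asymp I_{-q}$, pass to $k$-dimensional marginals with $k\asymp q$. The negative moment $I_{-k}$ is expressed through the values at $0$ of the marginal densities $f_{\pi_F\mu}(0)$, $F\in G_{n,k}$. For a log-concave density in isotropic-type position these values are bounded by $C^k$ times the reciprocal volume of $Z_k(\pi_F\mu)$.
\item[(iv)] In isotropic position, $Z_q\supseteq cB_2^n$ and $w(Z_q)\ls C\sqrt q$ for small $q$. From this, Dvoretzky's theorem yields $q_*\gr c\sqrt n$.
\end{enumerate}
Combining these for $q\ls c\sqrt n$ gives
$$I_q\asymp I_{-q}\ls I_2=\sqrt n,$$
which is the main claim.

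\textbf{Main obstacle.} I expect step (iii) to be the hardest: the comparison $I_q\ls CI_{-q}$ up to $q_*$. It needs the volume estimate for $Z_k$ of marginals together with the bound on $f(0)$ for $k$-dimensional log-concave marginals. Once that is in place, the isotropic bound $q_*\gr c\sqrt n$ and the tail estimate follow from the steps above.
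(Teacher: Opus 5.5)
The paper does not prove this statement; it quotes it from Paouris, so the comparison is with Paouris' centroid-body argument. Several parts of your skeleton are correct: the identity $I_q\simeq\sqrt{n/q}\,w_q(Z_q)$, the use of Borell's lemma for $q\gr\sqrt n$, and Markov's inequality with $q=t\sqrt n$.

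Step (iv), however, does not work as written, and it is the only place where isotropy enters and the only source of the exponent $\sqrt n$. Your $q_*$ measures concentration of $h_{Z_q}$ on $S^{n-1}$, so it is governed by $n\,(w(Z_q)/R(Z_q))^2$. What you need is therefore a \emph{lower} bound for $w(Z_q)$ and an \emph{upper} bound for $R(Z_q)$. Your two inputs do not provide this:
\begin{itemize}
\item The inclusion $Z_q\supseteq Z_2=B_2^n$ only gives $w(Z_q)\gr 1$. Together with $R(Z_q)\ls Cq$ this yields $q_*\gtrsim n^{1/3}$, hence only a tail of order $\exp(-ctn^{1/3})$.
\item The bound $w(Z_q)\ls C\sqrt q$ for $q$ up to $\sqrt n$ is, via (i), essentially the estimate $I_q\ls C\sqrt n$ you are trying to prove. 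For bounded $q$ it is trivial and useless.
\end{itemize}
The missing input is $w(Z_q)\gtrsim\sqrt q$. By (i) and $I_q\gr I_2=\sqrt n$ you get $w_q(Z_q)\gr c\sqrt q$. Concentration on the sphere for the $R(Z_q)$-Lipschitz function $h_{Z_q}$, with $R(Z_q)\ls Cq$ by Borell, gives $w_q(Z_q)\ls w(Z_q)+Cq^{3/2}/\sqrt n$. For $q\ls c\sqrt n$ these two bounds force $w(Z_q)\simeq w_q(Z_q)\gtrsim\sqrt q$, and hence $q_*\gr c\sqrt n$.

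Step (iii) is also a much heavier tool than this statement needs, and your sketch omits its real difficulty. The formula $I_{-k}\simeq\sqrt n\,\bigl(\int_{G_{n,k}}f_{\pi_F\mu}(0)\,d\nu(F)\bigr)^{-1/k}$ requires control of $|P_FZ_k(\mu)|^{-1}$ averaged over \emph{all} $F\in G_{n,k}$. Dvoretzky's theorem only controls $F$ outside an exceptional set, and on that set you only know $P_FZ_k\supseteq B_F$. Handling the exceptional set at the full scale $k\simeq\sqrt n$ requires small-ball estimates for $h_{Z_k}$ on the sphere (Klartag--Vershynin), i.e.\ Paouris' later small-ball theorem.

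In the isotropic case you can bypass negative moments entirely, as Paouris' original argument does:
\begin{itemize}
\item For $k\ls c\sqrt n$, pick a single $F$ with $P_FZ_k(\mu)\supseteq c\,w(Z_k)B_F$.
\item Since $P_FZ_k(\mu)=Z_k(\pi_F\mu)$ and $\pi_F\mu$ is isotropic log-concave on $F$, you have $|Z_k(\pi_F\mu)|^{1/k}\ls Cf_{\pi_F\mu}(0)^{-1/k}\ls C'$. The last step uses $f(0)^{1/k}\gr e^{-1}\|f\|_\infty^{1/k}\gr c$ for isotropic log-concave densities.
\item Hence $w(Z_k)\ls C\sqrt k$, and $I_k\simeq\sqrt{n/k}\,w(Z_k)\ls C\sqrt n$.
\end{itemize}
Your Borell and Markov steps then finish the proof.
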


We shall use the following consequence of the log-concave Gluskin separation theorem obtained by Giannopoulos and the author \cite[Theorem~1.1]{GH26}. 
Unlike the previous estimates, this result is not part of the construction of the good event. 
It will be used in the final construction, where one needs independent copies generated from the same isotropic log-concave law to be far apart in Banach--Mazur distance.

\begin{proposition}[Log-concave Gluskin]\label{prop:gluskin_separation}
For every $\varepsilon_{\mathrm{Gl}}\in(0,1)$ there exist an integer $C_{\mathrm{Gl}}\gr \gamma_0$, constants $c_{\mathrm{Gl}}>0$ and $n_{\mathrm{Gl}}\in\N$ with the following property.
Let $n\gr n_{\mathrm{Gl}}$, let $\mu$ be any isotropic log-concave probability measure on $\R^n$ and put $N_n:=C_{\mathrm{Gl}}n$.
Let $P_{N_n,n}^{\mu}$ and $\widetilde P_{N_n,n}^{\mu}$ be two independent copies of the random polytope generated by independent samples from $\mu$.
Then
$$\Prob\left\{\BM\big((\R^n,\|\cdot\|_{P_{N_n,n}^{\mu}}), (\R^n,\|\cdot\|_{\widetilde P_{N_n,n}^{\mu}})\big)\gr c_{\mathrm{Gl}}n\right\} \gr 1-\varepsilon_{\mathrm{Gl}}.$$
\end{proposition}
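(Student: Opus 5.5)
The statement is, in essence, \cite[Theorem~1.1]{GH26} specialised to the regime we need. The first thing I would do is check that the form proved there already gives separation of order $n$ with probability tending to one, uniformly in $\mu$, once $N/n$ is a large enough fixed constant. Given that, the proposition follows by choosing the constants. For a prescribed $\varepsilon_{\mathrm{Gl}}$, I take $C_{\mathrm{Gl}}$ to be any integer at least $\gamma_0$ that is also large enough for \cite{GH26}. I take $c_{\mathrm{Gl}}$ to be the separation constant supplied there. Finally I choose $n_{\mathrm{Gl}}$ so large that the failure probability, which is of the form $C\exp(-cn^{\kappa})$ for some $\kappa>0$ and absolute constants, is at most $\varepsilon_{\mathrm{Gl}}$. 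The only point to check is that none of these choices depends on $\mu$. This is where the isotropic normalisation and the absolute constants of Theorems~\ref{thm:GM_norm_deviation}--\ref{thm:paouris_tail} enter.

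If one wants a self-contained argument, I would follow Gluskin's original scheme.

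\textbf{Step 1 (properties of the first polytope).} Condition on $P=P_{N_n,n}^{\mu}$ lying in a good event. On this event $P\subset C\sqrt n\,B_2^n$, by Theorem~\ref{thm:paouris_tail} and a union bound. Also $r(P)\gr c\sqrt{\ln(N/n)}$ and $|P|^{1/n}\ls C\sqrt{\ln(N/n)}/\sqrt n$, the latter from the volume of the convex hull of $N$ points of Euclidean norm $C\sqrt n$.

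\textbf{Step 2 (reduction to a bounded family of operators).} Suppose $\BM\ls c_{\mathrm{Gl}}n$. Then there is $T$ with $\widetilde P\subset TP\subset c_{\mathrm{Gl}}n\,\widetilde P$. Using the inradius and circumradius bounds for both polytopes, one may normalise $T$ so that it lies in a bounded set $\mathcal T$ of operators. This set admits an $\varepsilon$-net $\mathcal N$ with $|\mathcal N|\ls (C n)^{n^2}$.

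\textbf{Step 3 (estimate for a fixed operator).} For fixed $T\in\mathcal N$, the event $\widetilde P\subset 2TP$ forces every $\widetilde X_i$ to lie in $2TP$. Its probability is therefore at most $\mu(2TP)^{N}$. The key estimate is
$$\mu(K)\ls |K|\,\|f_\mu\|_\infty\ls |K|\,(CL_\mu)^n,$$
where $f_\mu$ is the density of $\mu$ and $L_\mu$ its isotropic constant. Here I use the current polylogarithmic bound on $L_\mu$, together with the volume bound on $P$ from Step 1 and the determinant normalisation of $T$. This makes $\mu(2TP)\ls e^{-c n}$, or at least $\mu(2TP)\ls\delta<1$ with $\delta$ absolute.

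\textbf{Step 4 (union bound).} The resulting bound is $\delta^{C_{\mathrm{Gl}}n}$. It beats the net cardinality $e^{Cn^2\ln n}$ only if the exponent is of order $n^2$. That is exactly where Gluskin instead uses the stronger requirement that each point $\widetilde X_i$ have small norm in $TP$, exploiting the extremality of $\widetilde X_i$ in $\widetilde P$.

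The main obstacle is Step 4. One has to run Gluskin's refined counting, with the factor $n$ coming from requiring $\|\widetilde X_i\|_{TP}\ls c_{\mathrm{Gl}}n\,\|\widetilde X_i\|_{\widetilde P}$ simultaneously for all $i$. This has to be done with log-concave small-ball bounds replacing Gaussian ones. For this I would first try small-ball estimates for linear images of isotropic log-concave vectors, in Paouris' negative-moment form, in place of the bound $|K|\,\|f_\mu\|_\infty$. This is precisely the content of \cite{GH26}, so in the paper I would simply invoke that theorem and adjust the constants as described in the first paragraph.
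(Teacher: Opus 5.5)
Your main line matches the paper. The proposition is not proved there but imported directly from \cite[Theorem~1.1]{GH26}, and the only work is the bookkeeping you describe: an admissible aspect ratio $C_{\mathrm{Gl}}\gr\gamma_0$, the resulting separation constant $c_{\mathrm{Gl}}$, and $n_{\mathrm{Gl}}$ large enough that the failure probability is below $\varepsilon_{\mathrm{Gl}}$. Uniformity in $\mu$ has to be read off from the statement of that theorem, not from Theorems~\ref{thm:GM_norm_deviation}--\ref{thm:paouris_tail}.

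Do not lean on the self-contained sketch, because it has errors beyond the union-bound failure you acknowledge. The bound $r(P)\gr c\sqrt{\ln(N/n)}$ in Step~1 is false for general isotropic log-concave $\mu$: for the isotropic cube $h_P(e_1)\ls\sqrt3$. In Step~3, the inclusions $\widetilde P\subset TP\subset c_{\mathrm{Gl}}n\widetilde P$ leave $|\det T|$ free up to a factor $(c_{\mathrm{Gl}}n)^n$, so $\mu(2TP)\ls|2TP|\,\|f_\mu\|_\infty$ yields nothing below $1$.
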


\subsection{Banach-space preliminaries}

We now recall the Banach-space input needed to pass from finite-dimensional exclusion of $\ell_\infty$ to cotype.

Let ${\bf X}$ be a Banach space and let $q\in[2,\infty)$. 
We say that ${\bf X}$ has cotype $q$ with constant $C_q$ if for every $k\in\N$ and every $x_1,\dots,x_k\in{\bf X}$,
$$\left(\sum_{i=1}^k\|x_i\|_{\bf X}^q\right)^{1/q}
\ls C_q\left(\Exp_\sigma\left\|\sum_{i=1}^k\sigma_i x_i\right\|_{\bf X}^q\right)^{1/q},$$
where $\sigma=(\sigma_1,\dots,\sigma_k)$ is uniformly distributed on $\{-1,1\}^k$.
We say that ${\bf X}$ has finite cotype if it has cotype $q$ for some finite $q$.

We shall use the elementary facts that cotype is monotone in the exponent and stable under linear isomorphisms.
More precisely, if ${\bf X}$ has cotype $q$ with constant $C_q$, then it has cotype $r$ with constant $C_q$ for every $r\gr q$.
If ${\bf X}$ is $D$-isomorphic to ${\bf Y}$, then cotype estimates transfer between ${\bf X}$ and ${\bf Y}$ with the constant multiplied by at most $D$.
Hilbert spaces have cotype $2$ with an absolute constant.

The result we use is the cotype part of the Maurey--Pisier theorem.  
In its qualitative form, this theorem says that a Banach space has finite cotype if and only if it does not contain the spaces $\ell_\infty^n$ uniformly; see \cite{MP76}.  
For the present argument we need the following explicit finite-dimensional exclusion estimate. 
We use the quantitative cotype form stated in \cite[Theorem~7.3.8 and Remark~7.3.9(d)]{HNVW17}. 
The proof method in that presentation is described there as a variation of arguments of Pisier \cite{Pisier74} and Hinrichs \cite{Hinrichs96}.

\begin{theorem}\label{thm:quant_MP_cotype}
{\rm \cite[Theorem~7.3.8 and Remark~7.3.9(d)]{HNVW17}.}
Let ${\bf X}$ be a Banach space, let $N\gr2$ and let $\varepsilon>0$.
Assume that $\ell_\infty^N$ is not $(1+\varepsilon)$-isomorphic to a subspace of
${\bf X}$. Then, for every
$$q>\left(\frac{50N}{\varepsilon}\right)^{2(N+1)},$$
the space ${\bf X}$ has cotype $q$ with cotype constant at most $5$.
\end{theorem}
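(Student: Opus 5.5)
The statement is quoted from \cite{HNVW17}, so in the paper it would simply be cited. If I had to reprove it, I would follow the Pisier--Hinrichs scheme. It has three steps: a one-step ``almost $\ell_\infty$'' lemma, an iteration over $N$-ary block decompositions that turns failure of cotype into a geometric decay estimate, and an averaging argument that passes from sup-over-signs to Rademacher averages.

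\emph{Step 1 (one-step lemma).} Suppose $z_1,\dots,z_N\in{\bf X}$ satisfy $\max_{\sigma\in\{-1,1\}^N}\|\sum_i\sigma_i z_i\|\ls1$. I would show that then $\min_i\|z_i\|\ls 1-\delta$ with $\delta=\varepsilon/(4(1+\varepsilon))$ or similar. Let $T:\ell_\infty^N\to{\bf X}$ be the map $Te_i=z_i$. By convexity, $\|T\|\ls1$, since the extreme points of $B_\infty^N$ are sign vectors. For the lower bound, fix $a\in B_\infty^N$ with $|a_j|=1$ and let $a'$ be $a$ with the sign of the $j$-th coordinate flipped. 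Then $2z_j=\pm(Ta-Ta')$, so
\[
\|Ta\|\gr 2\|z_j\|-\|Ta'\|\gr 2(1-\delta)-1=1-2\delta .
\]
If every $\|z_i\|>1-\delta$, this gives $\|T^{-1}\|\ls(1-2\delta)^{-1}$. For small $\delta$ the distortion is then at most $1+\varepsilon$, which contradicts the hypothesis.

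\emph{Step 2 (iteration).} Take $k=N^m$ vectors $x_1,\dots,x_k$, index them by words of length $m$ over $[N]$, and normalize so that $\max_\sigma\|\sum\sigma_ix_i\|\ls1$. At each level $\ell$, group the vectors into $N$-tuples of signed block sums $z_{w}=\sum_{v\succ w}\sigma_v x_v$. For every choice of inner signs, the family of $N$ sibling blocks again satisfies the hypothesis of Step 1 after rescaling by its parent's norm. Hence in each $N$-tuple at least one child has norm at most $(1-\delta)$ times its parent's bound. Following this down the tree, I expect an inequality of the form
\[
\sum_i\|x_i\|^q\ls\big((N-1)+(1-\delta)^q\big)^m\cdot(\text{normalization}).
\]
For $q$ of order $\delta^{-1}\ln N$, this would force $(\sum\|x_i\|^q)^{1/q}$ to be bounded. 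That is an $\ell_\infty$-upper-estimate version of cotype.

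\emph{Step 3 (from sup over signs to averages).} This is the step I expect to be the main obstacle, and it is where the doubly exponential exponent $(50N/\varepsilon)^{2(N+1)}$ and the constant $5$ should come from. Cotype involves $\Exp_\sigma\|\sum\sigma_ix_i\|^q$, not $\max_\sigma$. My first attempt would be Pisier's trick: by Chebyshev, restrict to a large set of sign patterns on which $\|\sum\sigma_ix_i\|\ls 2(\Exp\|\cdot\|^q)^{1/q}$. Then use a Ramsey/pigeonhole extraction, in the style of Hinrichs, of $N$ disjoint sub-blocks on which all $2^N$ sign combinations of blocks stay in this good set. This costs a factor like $(CN/\varepsilon)^{N}$ per level, and feeding that loss into Step 2 gives the stated threshold. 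Finally, general $k$ reduces to powers of $N$ by padding with zero vectors.
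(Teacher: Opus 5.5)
Citing \cite{HNVW17} is exactly what the paper does: Theorem~\ref{thm:quant_MP_cotype} is imported without proof, so that part is fine. The reproof you sketch, however, does not go through.

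Step~1 is correct. Step~2 is not. Since $(N-1)+(1-\delta)^q>1$ for every $N\gr2$, your bound $(\sum_i\|x_i\|^q)^{1/q}\ls\big((N-1)+(1-\delta)^q\big)^{m/q}$ grows with $m$ for every fixed $q$, so it gives nothing uniform in $k=N^m$. Knowing that one child per node is small controls the average $\frac1k\sum_i\|x_i\|^q$, hence $\min_i\|x_i\|$, but not the sum. (That child is even independent of the inner signs only if you first choose maximizing signs separately on the disjoint blocks.) To obtain a sum you have to apply the decay to the $j$ largest vectors for every $j$, using that a sub-sum has a smaller sign-maximum. Concretely, $\phi(k):=\sup\{\min_{i\ls k}\|x_i\|:\max_\sigma\|\sum_{i\ls k}\sigma_ix_i\|\ls1\}$ is submultiplicative, so $\phi(N^m)\ls(1-\delta)^m$. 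This gives the weak-$\ell_p$ bound $s_j\ls(1-\delta)^{-1}j^{-1/p}\max_\sigma\|\sum_i\sigma_ix_i\|$ for the decreasing rearrangement $(s_j)$ of $(\|x_i\|)$, with $p=\ln N/|\ln(1-\delta)|$. Only then can you sum over $j$ for $q>p$.

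The decisive gap is Step~3, which is a plan rather than an argument, and as stated it cannot work. A Chebyshev threshold $2(\Exp\|\cdot\|^q)^{1/q}$ already costs more than the gain $1-\delta$ of Step~1. Even with $1+\eta$ in place of $2$, you would need all $N$ selected block sums to be simultaneously within a factor close to $1$ of the quantities being iterated, and nothing in the sketch provides this. Making all $2^N$ block-sign combinations good is the easy part: a random choice of block signs and a union bound suffice once $q$ is large compared with $N$, and no Ramsey argument is needed.

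What works is to pass to averages only at the level of $N$ vectors, where it is cheap, and to iterate an inequality that tensorizes.
\begin{itemize}
\item For $x_1,\dots,x_N$ and $R:=(\Exp\|\sum_{i\ls N}\sigma_ix_i\|^q)^{1/q}$ one has $\max_i\|x_i\|\ls R$ and $\max_\sigma\|\sum_i\sigma_ix_i\|\ls2^{N/q}R$. So Step~1 gives $\frac1N\sum_{i\ls N}\|x_i\|^q\ls(1-\frac1{2N})R^q$ once $2^N(1-\delta)^q\ls\frac12$.
\item Let $D_k$ be the best constant in $(\frac1k\sum_{i\ls k}\|x_i\|^q)^{1/q}\ls D_k(\Exp\|\sum_{i\ls k}\sigma_ix_i\|^q)^{1/q}$. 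By Fubini $D_k$ is the same for ${\bf X}$ and $L^q({\bf X})$, so blocking gives $D_{ab}\ls D_aD_b$, hence $D_{N^m}\ls D_N^m$ with $D_N<1$.
\item The rearrangement trick, now with $L^q$-averages (which also decrease on sub-sums), gives $s_j\ls N^{\theta}j^{-\theta}(\Exp\|\sum_i\sigma_ix_i\|^q)^{1/q}$ with $\theta:=\log_N(1/D_N)>0$.
\item Summing yields cotype $r$ with constant at most $5$ for every $r\gr2/\theta$.
\end{itemize}
For $\varepsilon\ls1$ this threshold is $O(N^2\ln N/\varepsilon)$, far below $(50N/\varepsilon)^{2(N+1)}$. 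Incidentally, the latter is exponential in $N\ln(N/\varepsilon)$, not doubly exponential.

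Finally, as quoted the statement needs $\varepsilon$ bounded; this is harmless because the paper only uses $\varepsilon=1$. Indeed, ${\bf X}=\ell_\infty^{N-1}$ satisfies the hypothesis for every $\varepsilon$. But for $\varepsilon>50N$ the threshold is below $2$, while the cotype-$2$ constant of $\ell_\infty^{N-1}$ is at least $\sqrt{N-1}>5$ once $N\gr27$.
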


We shall also use John's theorem in its standard Banach--Mazur form.

\begin{theorem}[John]\label{thm:john}
Every $d$-dimensional normed space is at Banach--Mazur distance at most $\sqrt d$ from $\ell_2^d$.
\end{theorem}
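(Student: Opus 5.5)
The plan is the classical maximal-volume ellipsoid argument. Let $E$ be a $d$-dimensional normed space and identify it with $(\R^d,\|\cdot\|_K)$, where $K$ is a centrally symmetric convex body. By compactness of the set of centered ellipsoids contained in $K$, and continuity of volume, there is an ellipsoid $\mathcal{E}\subset K$ of maximal volume. Symmetry of $K$ lets us take $\mathcal{E}$ centered at $0$: replacing $\mathcal{E}$ by $\tfrac12(\mathcal{E}-\mathcal{E})$ does not decrease volume, by Brunn--Minkowski. After applying a linear map, which changes neither $\BM(E,\ell_2^d)$ nor the argument, we may assume $\mathcal{E}=B_2^d$. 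Then $B_2^d\subset K$, and it suffices to prove $K\subset\sqrt d\,B_2^d$. Indeed, the identity map then satisfies $\|x\|_K\ls\|x\|_2\ls\sqrt d\,\|x\|_K$, which gives $\BM(E,\ell_2^d)\ls\sqrt d$.

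The core step is John's decomposition of the identity. I would show that there exist contact points $u_1,\dots,u_m\in\partial K\cap S^{d-1}$ and weights $c_i>0$ such that $\sum_i c_iu_iu_i^\top=I_d$. Since the $u_i$ are unit vectors, taking traces gives $\sum_i c_i=d$.

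The proof goes by contradiction, using a separation argument in the space of symmetric matrices. Suppose $I_d/d$ is not in the convex hull of $\{uu^\top: u\in\partial K\cap S^{d-1}\}$. This hull is compact, since the contact set is compact. We can then separate $I_d/d$ from it by a symmetric matrix $H$, which we may normalize to have trace zero. This gives $\langle Hu,u\rangle<0$, say, for all contact points $u$. One then checks that the ellipsoid $(I+\delta H)^{-1}$-deformed, suitably rescaled, still lies in $K$ for small $\delta>0$ and has strictly larger volume. The first-order determinant expansion, together with the trace-zero normalization, is what produces the volume gain. This contradicts maximality.

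This perturbation step is the main technical obstacle. The delicate part is controlling the points of $S^{d-1}$ away from the contact set: one needs a uniform margin there, obtained from compactness, while near the contact set the strict inequality $\langle Hu,u\rangle<0$ does the work.

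Given the decomposition, the conclusion is short. Let $x\in K$. Every contact point $u_i$ lies on $\partial K$ and on $S^{d-1}$, with $B_2^d\subset K$. Hence the supporting hyperplane of $K$ at $u_i$ is also tangent to the sphere, so it equals $\{y:\langle y,u_i\rangle=1\}$. It follows that $h_K(u_i)=1$ and therefore $|\langle x,u_i\rangle|\ls1$ for all $i$. We then compute
$$\|x\|_2^2=\sum_i c_i\langle x,u_i\rangle^2\ls\sum_i c_i=d,$$
so $K\subset\sqrt d\,B_2^d$, as required.
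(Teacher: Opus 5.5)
The paper gives no proof of this statement. It quotes John's theorem as a classical fact and uses it only in two places: the small-dimensional case $n\ls k_{\mathrm{MP}}(\gamma)$ of Theorem~\ref{thm:cotype}, and the upper bound $D_{\bf X}(k)\ls k$ in Theorem~\ref{thm:theoremC}. Your maximal-volume ellipsoid argument via John's decomposition of the identity is the standard proof, and it is correct. In particular, the identification $h_K(u_i)=1$ at contact points, the computation $\|x\|_2^2=\sum_i c_i\langle x,u_i\rangle^2\ls d$, and the Banach--Mazur bound via the identity map are all right.

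The perturbation step is only sketched, so two points need care when you write it out.

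First, with $\operatorname{tr}H=0$ the linear term in $\det(I+\delta H)=1+\delta\operatorname{tr}H+O(\delta^2)$ vanishes; indeed $\det(I+\delta H)\ls 1$ by AM--GM. So the strict volume gain does not come from the determinant. It comes from rescaling by a factor $1+c\delta\eta$, which is allowed because $\langle Hu,u\rangle\ls-\eta$ on a neighbourhood of the contact set and $\|u\|_K\ls 1-\kappa$ on the rest of $S^{d-1}$.

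Second, the deformed ellipsoid must shrink in the directions where $\langle Hu,u\rangle<0$. With your sign convention, $(I+\delta H)B_2^d$ or $\{x:\langle (I-\delta H)x,x\rangle\ls 1\}$ does this. The image $(I+\delta H)^{-1}B_2^d$ would instead stretch there, so check which object your phrase ``$(I+\delta H)^{-1}$-deformed'' refers to.

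Finally, $\tfrac12(\mathcal{E}-\mathcal{E})$ is exactly the centred translate of $\mathcal{E}$, with the same volume. So the centring step needs no Brunn--Minkowski.
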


\section{The high-probability event}

Let $n,N\in\N$ with $N>n$ and put $\gamma:=N/n$.
From now on we fix a sufficiently small numerical value $\eta=\eta_0\in(0,1)$.
Choose and fix $\theta\in(0,1)$ such that $\sqrt{1-\theta}\gr 1-\eta_0$ and $\sqrt{1+\theta}\ls1+\eta_0$.
We also set $\theta_0:=\theta/2$.
Throughout the proofs $C,c>0$ denote absolute constants, allowed to change from line to line.

\subsection{Sparse singular values and inradius bounds}

The following proposition is the log-concave analogue of
\cite[Lemma 2.13]{HT26}. 
Its role is identical to that of Huang and Tikhomirov's Gaussian sparse singular-value event but the proof uses the log-concave matrix estimates collected in Section~2.2.

\begin{proposition}\label{prop:sparse_singular_values}
After increasing the fixed absolute constant $\gamma_0>1$ if necessary, there exist absolute constants $\tilde c, c_*, c_{\mathrm{low}}>0$ and $C_{\mathrm{up}}\gr1$ with the following property.
Assume $\gamma=N/n\gr \gamma_0$ and $\gamma\ls \exp(\tilde c\sqrt n)$.
Let $A$ be the $n\times N$ matrix with independent isotropic log-concave columns $X_1,\dots,X_N$.
Define $\Omega_{\mathrm{sv}}$ to be the event that $\|A\|\ls C_{\mathrm{up}}\sqrt N$, $s_{\min}(A^\top)\gr c_{\mathrm{low}}(\sqrt N-\sqrt n)$ and for every non-empty $J\subset[N]$ with $|J|\ls c_*n/\ln^2(e\gamma)$,
$$(1-\eta_0)\sqrt n\,\|z\|_2 \ls \|A_Jz\|_2 \ls (1+\eta_0)\sqrt n\,\|z\|_2 \qquad\hbox{for every }z\in\R^J. $$
Equivalently,
$$(1-\eta_0)\sqrt n \ls s_{\min}(A_J) \ls s_{\max}(A_J) \ls (1+\eta_0)\sqrt n $$
for every non-empty $J\subset[N]$ with $|J|\ls c_*n/\ln^2(e\gamma)$.
Then $\Prob\big(\Omega_{\mathrm{sv}}\big) \gr 1-C_{\mathrm{up}}\exp(-c_{\mathrm{low}}\sqrt n).$
\end{proposition}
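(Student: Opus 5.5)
The event $\Omega_{\mathrm{sv}}$ is an intersection of three events, and I plan to bound the failure probability of each separately using the estimates of Section~2.2, then take a union bound. Throughout I use the hypothesis $\gamma\ls\exp(\tilde c\sqrt n)$ with $\tilde c\ls 1/2$, say, which together with $n\gr1$ gives $N=\gamma n\ls \exp(\sqrt n)$ after adjusting constants (or for $n$ large; small $n$ can be absorbed by enlarging $C_{\mathrm{up}}$ so that the claimed probability bound is trivial). So Theorems~\ref{thm:op_norm} and \ref{thm:rip} are applicable.

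\emph{Operator norm.} Theorem~\ref{thm:op_norm} gives $\|A\|\ls C(\sqrt n+\sqrt N)\ls 2C\sqrt N$ off an event of probability $\exp(-c\sqrt n)$. I take $C_{\mathrm{up}}\gr 2C$.

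\emph{Lower singular value of $A^\top$.} Here $s_{\min}(A^\top)^2=\lambda_{\min}(AA^\top)$. Theorem~\ref{thm:sharp_emp_cov} gives $\lambda_{\min}(AA^\top)\gr N(1-C\gamma^{-1/2})$ with probability $1-C\exp(-c\sqrt n)$. Choosing $\gamma_0\gr 16C^2$ makes this at least $N/2$. Then $s_{\min}(A^\top)\gr \sqrt N/\sqrt2\gr c_{\mathrm{low}}(\sqrt N-\sqrt n)$ for any $c_{\mathrm{low}}\ls 1/\sqrt2$.

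\emph{Sparse singular values.} This is the main step. I apply Theorem~\ref{thm:rip} with $\theta=\theta$ fixed as above (so that $\sqrt{1\pm\theta}$ lies in $[1-\eta_0,1+\eta_0]$). I use threshold $2\theta$ for $\delta_m$ and reduce to requiring $\delta_m(A/\sqrt n)\ls\theta$, with the RIP slack at $\theta_0=\theta/2$. Set $m=\lfloor c_*n/\ln^2(e\gamma)\rfloor$; if $m=0$ there is nothing to prove. Writing $m/n=\rho\ls c_*/\ln^2(e\gamma)$, I have
$$\ln\frac{eN}{m\sqrt{m/n}}=\ln\frac{e\gamma}{\rho^{3/2}}\ls \ln(e\gamma)+\tfrac32\ln(1/\rho).$$
Hence
$$\sqrt\rho\,\ln\frac{e\gamma}{\rho^{3/2}}\ls \sqrt{c_*}+\tfrac32\sqrt\rho\ln(1/\rho).$$
Since $\rho\ls c_*$ and $\sqrt\rho\ln(1/\rho)$ is increasing for small $\rho$, this is at most $C'(\sqrt{c_*}+\sqrt{c_*}\ln(1/c_*))$. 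Choosing $c_*$ small makes $C\sqrt{m/n}\ln(\cdot)\ls\theta_0$. So $\{\delta_m>\theta\}\subset\{\delta_m>C\sqrt{m/n}\ln(\cdot)+\theta_0\}$, whose probability is bounded by three terms:
\begin{itemize}
\item[(a)] $C\exp(-c\sqrt m\ln(\cdot))$;
\item[(b)] $C\exp(-c\sqrt n)$;
\item[(c)] $\Prob\{\max_j|\|X_j\|_2^2/n-1|>\theta_0\}$.
\end{itemize}
For (c), Theorem~\ref{thm:GM_norm_deviation} and a union bound give at most $CN\exp(-c\sqrt n\theta_0^3)=C\gamma n\exp(-c'\sqrt n)$. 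Since $\gamma\ls\exp(\tilde c\sqrt n)$ with $\tilde c<c'/2$, this is at most $C\exp(-c''\sqrt n)$.

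The delicate term is (a), since $\sqrt m$ may be much smaller than $\sqrt n$, so I expect this to be the main obstacle. My first approach is to check whether (a) is genuinely dominated. The logarithmic factor is at least $\ln(e\gamma)$, so the exponent is at least $c\sqrt{c_*n}/\ln(e\gamma)\cdot\ln(e\gamma)=c\sqrt{c_*}\sqrt n$ when $m\approx c_*n/\ln^2(e\gamma)$. This is exactly why the $\ln^2$ scaling is chosen: (a) is then at most $C\exp(-c\sqrt n)$. One must handle the rounding of $m$ (e.g. $m\gr c_*n/(2\ln^2(e\gamma))$ as soon as $m\gr1$), which is harmless.

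Finally, $\delta_m(A/\sqrt n)\ls\theta$ gives $(1-\theta)n\|z\|^2\ls\|A_Jz\|^2\ls(1+\theta)n\|z\|^2$ for $|J|\ls m$, which yields the stated two-sided bounds by the choice of $\theta$. Summing the failure probabilities gives $1-C\exp(-c\sqrt n)$, and renaming constants gives $C_{\mathrm{up}},c_{\mathrm{low}}$.
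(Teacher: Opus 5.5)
Your proposal is correct and follows the paper's proof essentially step by step. You control the same three events via Theorems~\ref{thm:op_norm}, \ref{thm:sharp_emp_cov} and \ref{thm:rip}, handle the column-norm term with Theorem~\ref{thm:GM_norm_deviation} and a union bound, use the same sparsity level $m=\lfloor c_*n/\ln^2(e\gamma)\rfloor$, and observe as the paper does that the $\ln^2(e\gamma)$ scaling makes the first restricted-isometry error term $C\exp(-c\sqrt n)$. Your bound on $\sqrt{m/n}\,\ln(\cdot)$ through the monotonicity of $\sqrt\rho\ln(1/\rho)$, and your treatment of $m=0$ as vacuous, are slightly cleaner variants that avoid enlarging $\gamma_0$ in terms of $c_*$; otherwise the two arguments coincide.
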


\begin{proof}
We define $L_\gamma:=\ln(e\gamma)$ and choose $c_*>0$ sufficiently small depending only on the fixed numerical value of $\theta_0$ and decrease it whenever necessary below.
We also increase $\gamma_0$ if necessary.

The assumption $\gamma\ls\exp(\tilde c\sqrt n)$ implies $N=\gamma n\ls\exp(\sqrt n)$ for all sufficiently large $n$. 
The finitely many smaller dimensions will be absorbed into the final constants. 
Hence by Theorem~\ref{thm:op_norm} and since $N\gr n$,
$$\Prob\{\|A\|>2C\sqrt N\}\ls \Prob\{\|A\|>C(\sqrt N+\sqrt{n})\}\ls \exp(-c\sqrt n).$$

For the lower singular value of $A^\top$ Theorem~\ref{thm:sharp_emp_cov} gives with probability at least $1-C\exp(-c\sqrt n)$, 
$$\frac{\lambda_{\min}(AA^\top)}{N}\gr 1-C\sqrt{\frac nN}\gr \frac{1}{2}$$
after increasing $\gamma_0$ if necessary. This gives
$$s_{\min}(A^\top)=\sqrt{\lambda_{\min}(AA^\top)} \gr c\sqrt N\gr c(\sqrt N-\sqrt n).$$

We now choose the sparsity level $m_0:=\left\lfloor \frac{c_*n}{L_\gamma^2}\right\rfloor.$
Since $L_\gamma:=\ln(e\gamma)$, by the choice of $\tilde c>0$ and after increasing $\gamma_0$ if necessary, we see that if $n$ is sufficiently large
then $1\ls m_0\ls n$ and $m_0\gr \frac{c_*n}{2L_\gamma^2}.$
Since $N=\gamma n$,  by the definition of $m_0$ we get
$$\ln\left(\frac{eN}{m_0\sqrt{m_0/n}}\right) \ls C\left(L_\gamma+\ln(c_*^{-1})+\ln L_\gamma\right). $$
After increasing $\gamma_0$ if necessary for every $\gamma\gr\gamma_0$ the last display is bounded by $CL_\gamma$. Therefore 
\begin{equation}\label{eq:m_0} C\sqrt{\frac {m_0}n} \ln\left(\frac{eN}{m_0\sqrt{m_0/n}}\right) \ls C\sqrt{c_*}\ls \frac{\theta}{2},\end{equation}
since $c_*$ is small enough depending on $\theta_0$.

We apply Theorem~\ref{thm:rip} with $m=m_0$ and with the theorem parameter equal to $\theta_0=\theta/2$. The first term in Theorem~\ref{thm:rip} is bounded 
by $C\exp(-c\sqrt n)$. To see this, note that $m_0\ls n$ implies $m_0\sqrt{m_0/n}\ls n$ and hence
$$\ln\left(\frac{eN}{m_0\sqrt{m_0/n}}\right)\gr \ln(e\gamma)=L_\gamma.$$
Using also $m_0\gr c_*n/(2L_\gamma^2)$ we get
$$\sqrt{m_0}\ln\left(\frac{eN}{m_0\sqrt{m_0/n}}\right) \gr \sqrt{\frac{c_*n}{2L_\gamma^2}}\,L_\gamma =\frac{1}{\sqrt{2}}\sqrt{c_*}\sqrt n. $$
Taking also \eqref{eq:m_0} into account, we see that
$$\Prob\left\{\delta_{m_0}(A/\sqrt n)>\theta\right\}\ls C\exp(-c\sqrt n).$$
For the third term in Theorem~\ref{thm:rip} we use Theorem~\ref{thm:GM_norm_deviation} and the union bound over $N=\gamma n$ columns
to write 
$$\Prob\left\{\max_{j\ls N}\left|\frac{\|X_j\|_2^2}{n}-1\right|>\theta_0\right\} \ls C\gamma n\exp(-c\sqrt n\,\theta_0^3).$$
Since $\theta_0$ is fixed and $\gamma\ls\exp(\tilde c\sqrt n)$, choosing $\tilde c>0$ sufficiently small gives $C\gamma n\exp(-c\sqrt n\,\theta_0^3)\ls C\exp(-c\sqrt n)$ 
for all sufficiently large $n$. Indeed, $C\gamma n\exp(-c\sqrt n\,\theta_0^3) \ls C\exp\left(\ln n+\tilde c\sqrt n-c\theta_0^3\sqrt n\right)$ and the claim follows by choosing $\tilde c\ls c\theta_0^3/4$ and using $\ln n\ls c\theta_0^3\sqrt n/4$ for all sufficiently large $n$ after decreasing the absolute constant $c>0$.

Combining the above, we see that with probability at least $1-C\exp(-c\sqrt n)$,
$$\sup_{\substack{J\subset[N]\\ |J|\ls c_*n/\ln^2(e\gamma)}} \left\| \frac1n A_J^\top A_J-I \right\| \ls \theta.$$
On this event, for every such $J$ and every $z\in\R^J$,
$$ (1-\theta)\|z\|_2^2 \ls \frac1n\|A_Jz\|_2^2 \ls (1+\theta)\|z\|_2^2. $$
Taking square roots and using the choice of $\theta$ gives
$$(1-\eta_0)\sqrt n\,\|z\|_2 \ls \|A_Jz\|_2 \ls (1+\eta_0)\sqrt n\,\|z\|_2. $$

Choose $C_{\mathrm{up}}\gr C$ sufficiently large and choose $c_{\mathrm{low}}>0$ sufficiently small both absolute.
Intersecting the operator-norm, lower singular-value and restricted isometry events and absorbing the finitely many small dimensions gives $\Prob\big(\Omega_{\mathrm{sv}}\big) \gr 1-C_{\mathrm{up}}\exp(-c_{\mathrm{low}}\sqrt n).$
\end{proof}

We shall use the following volumetric estimate, due to B\'{a}r\'{a}ny and F\"{u}redi, 
only to obtain an upper bound on the Euclidean inradius of $P_{N,n}^{\mu}$.

\begin{theorem}\label{thm:barany_furedi}{\rm \cite[Theorem~4.6.9]{AAGM21}.}
There exists an absolute constant $C>0$ such that the following holds.
Let $x_1,\dots,x_M\in B_2^n$ and let $K=\conv\{x_1,\dots,x_M\}$. Then
$$|K|^{1/n}\ls C\frac{\sqrt{\ln(1+M/n)}}{n}.$$
\end{theorem}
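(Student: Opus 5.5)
The plan is to pass to the polar body and bound its volume from below with a Gaussian computation; Šidák's inequality is what produces the factor $\ln(1+M/n)$ instead of $\ln M$. In outline: symmetrize, apply the Blaschke--Santaló inequality, and compute a Gaussian measure.

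\emph{Reduction to the symmetric case.} Set $L:=\conv\{\pm x_1,\dots,\pm x_M\}$. Then $K\subset L$, so it suffices to bound $|L|^{1/n}$; replacing $M$ by $2M$ only changes constants. If $L$ has empty interior the claim is trivial, so assume $L$ is a centrally symmetric convex body. By the Blaschke--Santaló inequality,
$$|L|\,|L^\circ|\ls |B_2^n|^2 .$$
Since $|B_2^n|^{2/n}\ls C/n$, it is enough to prove
$$|L^\circ|^{1/n}\gr \frac{c}{\sqrt{\ln(1+M/n)}} .$$

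\emph{Lower bound for $|L^\circ|$.} The polar body is
$$L^\circ=\{y\in\R^n:\ |\langle x_i,y\rangle|\ls1\ \hbox{for all } i\ls M\}.$$
Let $G$ be a standard Gaussian vector in $\R^n$ and $t>0$. The Gaussian density is at most $(2\pi)^{-n/2}$, so
$$\gamma_n(tL^\circ)\ls (2\pi)^{-n/2}\,t^n\,|L^\circ| .$$
For each $i$ we have $\|x_i\|_2\ls1$, so $\langle x_i,G\rangle$ is a centred Gaussian with variance at most $1$. Šidák's inequality for symmetric slabs then gives
$$\gamma_n(tL^\circ)=\Prob\Big\{\max_i|\langle x_i,G\rangle|\ls t\Big\}\gr \prod_{i=1}^M\Prob\{|\langle x_i,G\rangle|\ls t\}\gr \big(1-2e^{-t^2/2}\big)^M .$$

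\emph{Choice of $t$.} Take $t^2:=2\ln(4(1+M/n))$. Then $\varepsilon:=2e^{-t^2/2}\ls\min\{1/2,\,n/(2M)\}$. Using $1-\varepsilon\gr e^{-2\varepsilon}$ for $\varepsilon\ls1/2$,
$$\gamma_n(tL^\circ)\gr e^{-2\varepsilon M}\gr e^{-n}.$$
Combining with the density bound,
$$|L^\circ|^{1/n}\gr \frac{\sqrt{2\pi}}{e\,t}\gr \frac{c}{\sqrt{\ln(1+M/n)}} .$$
The last step uses $\ln(4(1+M/n))\ls C\ln(1+M/n)$, which holds once $M\gr n$. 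For $M<n$ one applies the result with $M$ replaced by $n$: this is allowed since adding points only enlarges $L$, and it costs only an absolute constant. This completes the estimate.

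\emph{Main obstacle.} The key step is getting $\ln(1+M/n)$ rather than $\ln M$. A plain union bound gives $\gamma_n(tL^\circ)\gr1/2$ only for $t\sim\sqrt{\ln M}$. The point is that we only need Gaussian measure $e^{-Cn}$, not a constant, because an exponential-in-$n$ loss disappears after taking $n$-th roots. Šidák's product inequality is exactly what converts this weaker requirement into the choice $t^2\sim\ln(M/n)$.
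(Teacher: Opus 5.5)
The paper does not prove this statement: it quotes it from \cite[Theorem~4.6.9]{AAGM21}. It uses it only for the upper bound on $r(P_{N,n}^{\mu})$ in Proposition~\ref{prop:inradius}, which the paper says is not needed for its main result. So there is no internal argument to compare with. What you give is a correct, self-contained proof by the classical duality route: symmetrize, use Blaschke--Santal\'o to reduce to a lower bound on $|L^\circ|$, and bound the Gaussian measure of $tL^\circ$ from below with \v{S}id\'ak's inequality.

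The details check out:
\begin{itemize}
\item $L^\circ$ is the intersection of only $M$ symmetric slabs, so the doubling $M\to 2M$ you mention is not even needed.
\item Your choice of $t$ gives $\varepsilon=n/(2(n+M))$, hence $\varepsilon\ls 1/2$ and $\varepsilon M<n/2$.
\item $\ln(4(1+M/n))\ls 3\ln(1+M/n)$ once $M\gr n$.
\end{itemize}
Your diagnosis of the key point is also right. Only Gaussian measure $e^{-Cn}$ is needed, and the product bound from \v{S}id\'ak turns this into $t^2\sim\ln(1+M/n)$ rather than $\ln M$. Other standard proofs exist, such as the Carl--Pajor route through entropy numbers of $\ell_1^M\to\ell_2^n$. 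Yours rests on just two classical inequalities and gives explicit constants.

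One sentence needs fixing. For $M<n$ you say that replacing $M$ by $n$ ``costs only an absolute constant''. That is false. For $M\ll n$ the target bound $\sqrt{\ln(1+M/n)}$ is of order $\sqrt{M/n}$, so the loss factor $\sqrt{\ln 2/\ln(1+M/n)}$ is unbounded. The case is harmless for a different reason: if $M<n$, then $L$, and hence $K$, lies in the span of the $x_i$, which has dimension less than $n$, so $|K|=0$. Your empty-interior clause already covers this, and that is the justification you should give instead.
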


The following proposition is the log-concave analogue of \cite[Lemma 3.3]{HT26}. 
The lower bound on the inradius and the relative inradius estimates follow the same deterministic argument as in Huang--Tikhomirov once their Gaussian sparse-singular-value event is replaced by $\Omega_{\mathrm{sv}}$. 
The upper bound on the full inradius uses a different log-concave input, namely Paouris' tail estimate combined with the Bárány--Füredi volumetric bound.
Although this upper bound is not used in the proof of the main result of the present paper we include it for completeness and for comparison with the corresponding Gaussian estimate in \cite[Lemma 3.3]{HT26}.

\begin{proposition}\label{prop:inradius}
Assume $\gamma=N/n\gr \gamma_0$ and $\gamma\ls \exp(\tilde c\sqrt n)$.
There are absolute constants $c_{\mathrm{rad}}>0$ and $C_{\mathrm{rad}}\gr1$ with the following property.
Let $A$ be the $n\times N$ matrix with independent isotropic log-concave columns $X_1,\dots,X_N$ and let $P_{N,n}^{\mu}:=\conv\{\pm X_j:1\ls j\ls N\}$.

With probability at least $1-C_{\mathrm{rad}}\exp(-c_{\mathrm{rad}}\sqrt n)$ one has $r(P_{N,n}^{\mu})\ls C_{\mathrm{rad}} \sqrt{\ln(e\gamma)}.$
Moreover, on $\Omega_{\mathrm{sv}}$ one has $r(P_{N,n}^{\mu})\gr c_{\mathrm{rad}}$ and for every non-empty $I\subset[N]$ with $|I|\ls c_*n/\ln^2(e\gamma)$ the relative in-radius of $\conv\{\pm X_j:j\in I\}\subset \spn\{X_j:j\in I\}$ satisfies
$$(1-\eta_0)\sqrt{\frac{n}{|I|}} \ls r\big(\conv\{\pm X_j:j\in I\}\big) \ls (1+\eta_0)\sqrt{\frac{n}{|I|}}. $$
\end{proposition}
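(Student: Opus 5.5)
The statement has three parts, and each gets its own argument: an upper bound on $r(P_{N,n}^{\mu})$ in probability, a deterministic lower bound on $\Omega_{\mathrm{sv}}$, and two-sided relative in-radius bounds for sparse subpolytopes, also on $\Omega_{\mathrm{sv}}$.

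\emph{Upper bound on the in-radius.} Apply Theorem~\ref{thm:paouris_tail} with $t=1$ to each column and take a union bound over the $N=\gamma n$ columns. Using $\gamma\ls\exp(\tilde c\sqrt n)$ with $\tilde c$ small, this gives $\max_j\|X_j\|_2\ls C\sqrt n$ with probability at least $1-\gamma n e^{-\sqrt n}\gr 1-Ce^{-c\sqrt n}$. On this event the $2N$ points $\pm X_j/(C\sqrt n)$ lie in $B_2^n$. Theorem~\ref{thm:barany_furedi} with $M=2N$ then yields
$$|P_{N,n}^{\mu}|^{1/n}\ls C\sqrt n\cdot\frac{\sqrt{\ln(1+2\gamma)}}{n}=C\frac{\sqrt{\ln(e\gamma)}}{\sqrt n}.$$
If $rB_2^n\subset P_{N,n}^{\mu}$, then $r|B_2^n|^{1/n}\ls|P_{N,n}^{\mu}|^{1/n}$. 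Since $|B_2^n|^{1/n}\gr c/\sqrt n$, we get $r\ls C\sqrt{\ln(e\gamma)}$.

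\emph{Lower bound on the in-radius.} Use duality: $r(P)=\min_{\theta\in S^{n-1}}h_P(\theta)$ and $h_P(\theta)=\max_j|\langle X_j,\theta\rangle|=\|A^\top\theta\|_\infty$. Fix $\theta$ and put $v=A^\top\theta\in\R^N$. On $\Omega_{\mathrm{sv}}$ we have $\|v\|_2\gr c_{\mathrm{low}}(\sqrt N-\sqrt n)\gr c\sqrt N$ for $\gamma\gr\gamma_0$. Let $J$ be the set of the $m=\lfloor c_*n/\ln^2(e\gamma)\rfloor$ largest coordinates of $|v|$. Then
$$\|v_J\|_2=\|A_J^\top\theta\|_2\ls(1+\eta_0)\sqrt n,$$
so
$$\|v_{J^c}\|_2^2\gr c^2N-(1+\eta_0)^2n\gr c'N.$$
Every coordinate off $J$ is at most the $m$-th largest, which in turn is at most $\|v_J\|_2/\sqrt m$. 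This is the weak point: it only bounds things from above. The correct route is the other direction: $\|v\|_\infty^2\cdot N\gr\|v_{J^c}\|_2^2\gr c'N$, which gives $\|v\|_\infty\gr c$ and hence $r(P)\gr c_{\mathrm{rad}}$. In fact the full $\|v\|_2\gr c\sqrt N$ already suffices, since $\|v\|_\infty\gr\|v\|_2/\sqrt N\gr c$, so the sparse estimate is not even needed here.

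\emph{Relative in-radius.} Fix $I$ with $|I|\ls c_*n/\ln^2(e\gamma)$ and set $F=\spn\{X_j:j\in I\}$. On $\Omega_{\mathrm{sv}}$, $A_I$ is injective, so $\dim F=|I|$ and $A_I$ maps $B_1^I$ onto $Q=\conv\{\pm X_j:j\in I\}$. Consider the isometry-like map $U=A_I/\sqrt n:\R^I\to F$, which satisfies $(1-\eta_0)\|z\|_2\ls\|Uz\|_2\ls(1+\eta_0)\|z\|_2$. Since $r(B_1^I)=|I|^{-1/2}$, we have $|I|^{-1/2}B_2^I\subset B_1^I$. Applying $\sqrt nU$ and using $U(B_2^I)\supset(1-\eta_0)(B_2^n\cap F)$ (by the lower bound and surjectivity onto $F$) gives the lower bound $(1-\eta_0)\sqrt{n/|I|}$. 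For the upper bound, $B_1^I$ touches the sphere of radius $|I|^{-1/2}$ at the point $w=|I|^{-1}(\pm1,\dots,\pm1)$, which lies on a facet with normal $u$. Then $\sqrt nUw$ lies on the boundary of $Q$ and has norm at most $(1+\eta_0)\sqrt{n/|I|}$. Since $Q$ is convex and a boundary point bounds the in-radius, $r(Q)\ls\|\sqrt nUw\|_2$, which is the required upper bound. Any boundary point $p$ gives $r\ls\|p\|_2$, so this step is immediate.

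The main obstacle is the probability bookkeeping in the upper bound: the union bound over $N$ columns must be absorbed into $e^{-c\sqrt n}$, which is exactly where the assumption $\gamma\ls\exp(\tilde c\sqrt n)$ is needed. The remaining steps are deterministic on $\Omega_{\mathrm{sv}}$.
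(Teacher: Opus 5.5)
Your proposal is correct and matches the paper's proof in all three parts. The upper bound uses Paouris' tail, a union bound and B\'{a}r\'{a}ny--F\"{u}redi; the paper takes a large $t=L$ depending on $\tilde c$ rather than $t=1$, which avoids needing $\tilde c<1$, though that holds anyway. The lower bound is $\|A^\top\theta\|_\infty\gr\|A^\top\theta\|_2/\sqrt N\gr c_{\mathrm{low}}(1-\gamma^{-1/2})$, and the relative in-radius uses $|I|^{-1/2}B_2^I\subset B_1^I$ together with a boundary point $A_Iw$. Your detour through the $m$ largest coordinates of $A^\top\theta$ is unnecessary, as you conclude yourself, and can be deleted.
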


\begin{proof}
Let $P:=P_{N,n}^{\mu}=\conv\{\pm X_j:1\ls j\ls N\}$.
We first prove the lower bound for $r(P)$ on $\Omega_{\mathrm{sv}}$.
For $\theta\in S^{n-1}$,
$$h_P(\theta)=\max_{1\ls j\ls N}|\langle X_j,\theta\rangle| =\|A^\top\theta\|_\infty.$$
Therefore
$$r(P) = \min_{\theta\in S^{n-1}}h_P(\theta)  \gr \frac{1}{\sqrt N}\min_{\theta\in S^{n-1}}\|A^\top\theta\|_2.$$
On $\Omega_{\mathrm{sv}}$ we have
$$\min_{\theta\in S^{n-1}}\|A^\top\theta\|_2=s_{\min}(A^\top) \gr c_{\mathrm{low}}(\sqrt N-\sqrt n).$$
Since $N=\gamma n$ and $\gamma\gr\gamma_0$ this gives
$$r(P) \gr c_{\mathrm{low}}\left(1-\sqrt{\frac nN}\right) = c_{\mathrm{low}}\left(1-\frac1{\sqrt \gamma}\right) \gr c_{\mathrm{low}}\left(1-\frac1{\sqrt{\gamma_0}}\right).$$
Since $c_{\mathrm{low}}$ and $\gamma_0$ are absolute, we get $r(P)\gr c_{\mathrm{rad}}$ for a universal constant $c_{\mathrm{rad}}>0$ on $\Omega_{\mathrm{sv}}$.

We next prove the upper bound for $r(P)$.
By Theorem~\ref{thm:paouris_tail} for any sufficiently large $L>0$ we have
$$\Prob\{\|X_1\|_2>L\sqrt n\}\ls \exp(-cL\sqrt n).$$
Since $N=\gamma n$, the union bound gives
$$\Prob\left\{\max_{1\ls j\ls N}\|X_j\|_2>L\sqrt n\right\}\ls \gamma n\exp(-cL\sqrt n).$$
Choosing $L>0$ sufficiently large depending only on the fixed value of $\tilde c$, which is absolute, and using $\gamma\ls\exp(\tilde c\sqrt n)$ 
this is bounded by $C\exp(-c\sqrt n)$ for all sufficiently large $n$.

Assume from now on that $\max_{j\ls N}\|X_j\|_2\ls L\sqrt n$.
Put $u_j:=X_j/(L\sqrt n)$, $1\ls j\ls N$ and $K:=\conv\{\pm u_j:\ 1\ls j\ls N\}$.
Then $u_j\in B_2^n$ and $P=L\sqrt n\,K$.

By Theorem~\ref{thm:barany_furedi} applied to the $2N$ points $\pm u_j\in B_2^n$,
$$|K|^{1/n}\ls C\frac{\sqrt{\ln(1+2N/n)}}{n}\ls C\frac{\sqrt{\ln(e\gamma)}}{n}.$$
Hence
$$|P|^{1/n}=L\sqrt n\,|K|^{1/n}\ls C\frac{\sqrt{\ln(e\gamma)}}{\sqrt n}.$$
If $r(P)B_2^n\subset P$ then $r(P)^n|B_2^n|\ls |P|$.
Since $|B_2^n|^{1/n}\gr c/\sqrt n$, we obtain
$$r(P)\ls \frac{|P|^{1/n}}{|B_2^n|^{1/n}}\ls C_{\mathrm{rad}} \sqrt{\ln(e\gamma)},$$
after increasing the absolute constant $C_{\mathrm{rad}}\gr1$.
Thus after decreasing $c_{\mathrm{rad}}>0$ and increasing $C_{\mathrm{rad}}\gr1$ if necessary and absorbing the finitely many small dimensions,
$$\Prob\left\{r(P)\ls C_{\mathrm{rad}}\sqrt{\ln(e\gamma)}\right\}\gr 1-C_{\mathrm{rad}}\exp(-c_{\mathrm{rad}}\sqrt n).$$

It remains to prove the relative in-radius estimate for small sub-polytopes.
Let $I\subset[N]$ be non-empty, set $m:=|I|$ and assume $m\ls c_*n/\ln^2(e\gamma)$.
Write $P_I:=\conv\{\pm X_j:j\in I\}=A_I B_1^m$ inside the subspace $\spn\{X_j:j\in I\}$.
On $\Omega_{\mathrm{sv}}$
$$(1-\eta_0)\sqrt n\,\|z\|_2\ls \|A_Iz\|_2\ls (1+\eta_0)\sqrt n\,\|z\|_2,\qquad z\in\R^I.$$
Since $m^{-1/2}B_2^m\subseteq B_1^m$, we have $A_I(m^{-1/2}B_2^m)\subseteq A_I(B_1^m)=P_I$.
The in-radius of the ellipsoid $A_I(m^{-1/2}B_2^m)$ is $s_{\min}(A_I)/\sqrt m$ and therefore
$$r(P_I) \gr \frac{s_{\min}(A_I)}{\sqrt m} \gr (1-\eta_0)\sqrt{\frac nm}$$
by Proposition~\ref{prop:sparse_singular_values}. For the upper bound, take $z=(1/m,\dots,1/m)\in\R^I$.
Then $\|z\|_1=1$, so $z\in\partial B_1^m$ and since $A_I$ is injective on $\R^I$, $A_Iz\in\partial P_I$ relative to $\spn\{X_j:j\in I\}$.
Moreover
$$\|A_Iz\|_2\ls s_{\max}(A_I)\|z\|_2\ls (1+\eta_0)\sqrt n\,m^{-1/2}$$
again by Proposition~\ref{prop:sparse_singular_values}. Thus the relative boundary of $P_I$ contains a point of Euclidean norm at most $(1+\eta_0)\sqrt{n/m}$ and hence
$$r(P_I)\ls (1+\eta_0)\sqrt{\frac nm}.$$

The lower bound for $r(P)$ and the relative in-radius estimates have been proved deterministically on $\Omega_{\mathrm{sv}}$ while the upper bound for $r(P)$ was proved with the stated probability.
\end{proof}

\subsection{Projection and dot-product counting}

The next elementary lemma replaces the use of the Grassmannian discretization argument of Huang--Tikhomirov specifically \cite[Lemmas 2.11 and 2.14]{HT26} in the proof of the projection-counting estimate needed below.  
Instead of discretizing $G_{n,d}$ we use the operator-norm bound directly which gives a deterministic estimate uniform over all subspaces.

\begin{lemma}\label{lem:projection_counting}
Let $x_1,\dots,x_N\in\R^n$ and let $A$ be the $n\times N$ matrix with columns
$x_1,\dots,x_N$.
Assume that $\|A\|\ls L\sqrt N$.
Then for every $1\ls d\ls n$, every $d$-dimensional subspace
$F\subset\R^n$ and every $s>0$,
$$\left|\left\{1\ls j\ls N:\ \|\Proj_F x_j\|_2> Ls\sqrt d\right\}\right| \ls \frac{N}{s^2}. $$
\end{lemma}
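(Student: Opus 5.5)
The plan is a second-moment (Chebyshev/Markov) count based on the Hilbert--Schmidt norm of $\Proj_F A$. Fix $d$, a $d$-dimensional subspace $F$ and $s>0$. The key identity is
$$\sum_{j=1}^N\|\Proj_F x_j\|_2^2=\|\Proj_F A\|_{\mathrm{HS}}^2 .$$
It holds because the columns of $\Proj_F A$ are exactly the vectors $\Proj_F x_j$.

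Next, I bound the Hilbert--Schmidt norm of this rank-$d$ operator by its operator norm. Let $u_1,\dots,u_d$ be an orthonormal basis of $F$. Then
$$\|\Proj_F A\|_{\mathrm{HS}}^2=\|A^\top\Proj_F\|_{\mathrm{HS}}^2=\sum_{i=1}^d\|A^\top u_i\|_2^2\ls d\,\|A^\top\|^2=d\|A\|^2\ls L^2Nd .$$
Equivalently, one can use that $\Proj_F A$ has rank at most $d$, so $\|\Proj_F A\|_{\mathrm{HS}}^2\ls d\|\Proj_FA\|^2\ls d\|A\|^2$.

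Finally, let $S$ be the set of indices $j$ with $\|\Proj_F x_j\|_2>Ls\sqrt d$. Each $j\in S$ contributes more than $L^2s^2d$ to the sum above. Hence
$$|S|\,L^2s^2d<\sum_{j}\|\Proj_F x_j\|_2^2\ls L^2Nd ,$$
and therefore $|S|\ls N/s^2$. The case $S=\emptyset$ is trivial.

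There is no real obstacle here. The only point requiring care is to bound the Hilbert--Schmidt norm by $\sqrt d$ times the operator norm through the rank-$d$ projection, rather than by $\sqrt n$ through $A$ itself. This is what produces the factor $\sqrt d$ and makes the estimate uniform over all $F\in G_{n,d}$ without any discretization.
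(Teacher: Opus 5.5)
Your proof is correct and is essentially the paper's argument: you bound $\sum_j\|\Proj_F x_j\|_2^2=\sum_{\ell=1}^d\|A^\top u_\ell\|_2^2\ls d\|A\|^2\ls dL^2N$ using an orthonormal basis of $F$, and then count the indices whose projection exceeds $Ls\sqrt d$. The Hilbert--Schmidt phrasing is just a repackaging of that same computation.
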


\begin{proof}
Fix $F\in G_{n,d}$ and set
$J_s:=\{1\ls j\ls N:\ \|\Proj_F x_j\|_2> Ls\sqrt d\}$.
Then
$$ L^2s^2d\,|J_s|\ls \sum_{j\in J_s}\|\Proj_F x_j\|_2^2 \ls \sum_{j=1}^N\|\Proj_F x_j\|_2^2. $$
If $u_1,\dots,u_d$ is an orthonormal basis of $F$, then
$$ \sum_{j=1}^N\|\Proj_F x_j\|_2^2 = \sum_{\ell=1}^d\sum_{j=1}^N\langle x_j,u_\ell\rangle^2  = \sum_{\ell=1}^d\|A^\top u_\ell\|_2^2 \ls d\|A\|^2 \ls dL^2N.$$
Thus $L^2s^2d\,|J_s|\ls dL^2N$ and hence $|J_s|\ls N/s^2.$
\end{proof}

The following deterministic lemma is taken from Huang--Tikhomirov. 
It is independent of the distribution of the columns $X_j$ and therefore applies in the log-concave setting without any change.

\begin{lemma}\label{lem:dyaddecomp}{\rm \cite[Lemma 2.12]{HT26}.}
There is an absolute constant $c_{\mathrm{dyad}}>0$ with the following property.
Let $2\ls k\ls n$, let $y_1,\dots,y_k\in S^{n-1}$, let $Z\in\R^n$ and let $\sigma=(\sigma_1,\dots,\sigma_k)$ be uniformly distributed on $\{-1,1\}^k$.
Assume that for some $t\gr 1$, 
$$\Prob_\sigma\{|\langle Z,\sum_{i=1}^k\sigma_i y_i\rangle|\gr t\sqrt k\}\gr k^{-100}.$$
Let $\Sigma=\sum_{i=1}^k y_i y_i^\top$. 
For every integer $p$ let $E_p$ be the span of the eigenvectors of $\Sigma$ corresponding to eigenvalues that belong to the interval $(2^p,2^{p+1}]$.
Then either 
$$\|\Proj_{\spn\{y_1,\dots,y_k\}}Z\|_2 \gr c_{\mathrm{dyad}}\,t\sqrt k/\sqrt{\ln k},$$ 
or there exists an integer $p$ with $0\ls p\ls \lceil\log_2 k\rceil$ and $E_p\neq\{0\}$ such that 
$$\|\Proj_{E_p}Z\|_2 \gr c_{\mathrm{dyad}}\,t\sqrt{\dim E_p}/(\ln k)^{3/2}.$$
\end{lemma}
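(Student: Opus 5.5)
The lemma is deterministic, so the plan is to reduce the hypothesis to a lower bound on a single quadratic form in $Z$ and then distribute that mass over the dyadic spectral pieces of $\Sigma$.

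\emph{Step 1: from the sign probability to a quadratic form.} Set $W:=\spn\{y_1,\dots,y_k\}$ and $a_i:=\langle Z,y_i\rangle=\langle \Proj_W Z,y_i\rangle$. Then
$$\Big\langle Z,\sum_i\sigma_i y_i\Big\rangle=\sum_i\sigma_i a_i .$$
Hoeffding's inequality gives
$$\Prob_\sigma\Big\{\Big|\sum_i\sigma_ia_i\Big|\gr t\sqrt k\Big\}\ls 2\exp\big(-t^2k/(2\|a\|_2^2)\big).$$
Comparing with the hypothesis $\gr k^{-100}$ yields $t^2k/(2\|a\|_2^2)\ls 100\ln k+\ln 2$. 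Since $k\gr2$, this gives
$$\|a\|_2^2\gr c_1t^2k/\ln k .$$
Moreover $\|a\|_2^2=\sum_i\langle Z,y_i\rangle^2=\langle \Sigma Z,Z\rangle$.

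\emph{Step 2: spectral decomposition of $\Sigma$.} The matrix $\Sigma$ is positive semidefinite with range $W$ and $\operatorname{tr}\Sigma=\sum_i\|y_i\|_2^2=k$. Hence every eigenvalue is at most $k$, and every nonzero eigenvalue lies in $(2^p,2^{p+1}]$ either for some $p<0$ or for some $0\ls p\ls\lceil\log_2k\rceil$. Let $E_{<0}$ be the sum of the $E_p$ with $p<0$. Then
$$\langle\Sigma Z,Z\rangle\ls \|\Proj_{E_{<0}}Z\|_2^2+\sum_{p=0}^{\lceil\log_2 k\rceil}2^{p+1}\|\Proj_{E_p}Z\|_2^2 .$$
The key counting fact is
$$2^p\dim E_p\ls \sum_{\lambda\in(2^p,2^{p+1}]}\lambda\ls\operatorname{tr}\Sigma= k .$$

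\emph{Step 3: case split.} The first term on the right of the Step 2 bound is at most $\|\Proj_WZ\|_2^2$.

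If that first term is at least $\tfrac12\|a\|_2^2$, then
$$\|\Proj_WZ\|_2^2\gr c_1t^2k/(2\ln k),$$
which is the first alternative.

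Otherwise the dyadic sum is at least $c_1t^2k/(2\ln k)$. Suppose, for contradiction, that $\|\Proj_{E_p}Z\|_2^2<c^2t^2\dim E_p/(\ln k)^3$ for every $0\ls p\ls\lceil\log_2k\rceil$ with $E_p\neq\{0\}$. Using $2^p\dim E_p\ls k$, the dyadic sum is then less than
$$2c^2t^2k\,(\lceil\log_2k\rceil+1)/(\ln k)^3\ls C c^2t^2k/(\ln k)^2 .$$
Here $\lceil\log_2k\rceil+1\ls C\ln k$ because $k\gr2$. This is below $c_1t^2k/(2\ln k)$ once $c$ is chosen small enough, since $\ln k\gr\ln 2$ lets us compare the two powers of $\ln k$ up to an absolute constant.

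That contradiction gives the second alternative, and taking $c_{\mathrm{dyad}}$ to be the minimum of the two constants finishes the proof. The only delicate point is the bookkeeping in Step 3. One power of $\ln k$ is lost to Hoeffding, one to the number of dyadic levels, and the stated $(\ln k)^{3/2}$ leaves an extra half power of slack. The assumption $t\gr1$ is not needed.
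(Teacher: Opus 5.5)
The paper gives no proof of this lemma; it is imported from Huang--Tikhomirov \cite[Lemma 2.12]{HT26}, so there is no in-text argument to compare against. Your proposal is a correct, self-contained proof.

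\emph{Step 1.} Hoeffding's inequality turns the sign-probability hypothesis into
$$\langle\Sigma Z,Z\rangle=\sum_i\langle Z,y_i\rangle^2\gr \frac{t^2k}{202\ln k}.$$
The hypothesis also forces $a\neq0$, so the division in that step is legitimate.

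\emph{Step 2.} The trace identity $\operatorname{tr}\Sigma=k$ does two things. It confines the nonnegative dyadic levels to $0\ls p\ls\lceil\log_2k\rceil$. It also gives the weight bound $2^p\dim E_p\ls k$. Pigeonholing over the $O(\ln k)$ levels then closes the argument.

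\emph{Degenerate case.} When no $E_p$ with $p\gr0$ is nonzero, the dyadic sum vanishes. Your contradiction step then correctly lands you in the first alternative.

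As you observe, your bookkeeping actually gives
$$\|\Proj_{E_p}Z\|_2\gr c\,t\sqrt{\dim E_p}/\ln k,$$
which is half a logarithm better than the stated $(\ln k)^{3/2}$. It also uses only $t>0$. Fed into Proposition~\ref{prop:dotprodstat}, this would only lower the factor $\ln^4k$ to $\ln^3k$, which has no effect on the later arguments.
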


The following proposition is the log-concave version of \cite[Lemma 2.15]{HT26}.  
The proof follows the same reduction as in Huang--Tikhomirov, one first applies Lemma~\ref{lem:dyaddecomp} and then controls the number of columns with large projections. 
In the present setting the Gaussian projection-counting argument of Huang--Tikhomirov which uses their Grassmannian discretization is replaced by the deterministic operator-norm estimate of Lemma~\ref{lem:projection_counting}. 
Thus the result holds on the log-concave event $\Omega_{\mathrm{sv}}$ with the
corresponding constants and probability estimate.

\begin{proposition}\label{prop:dotprodstat}
Assume $\gamma=N/n\gr \gamma_0$ and $\gamma\ls \exp(\tilde c\sqrt n)$.
Let $A$ be the $n\times N$ matrix with independent isotropic log-concave columns $X_1,\dots,X_N$.
There is an absolute constant $C_{\mathrm{dot}}\gr1$ such that on the event $\Omega_{\mathrm{sv}}$ the following two estimates hold.
For every $2\ls k\ls n/2$, every $t\gr \ln^2 k$ and every $k$-tuple $y_1,\dots,y_k\in S^{n-1}$, the number of indices $j\ls N$ such that 
$$\Prob_{\sigma\in\{-1,1\}^k}\{|\langle X_j,\sum_{i=1}^k\sigma_i y_i\rangle| \gr t\sqrt k\}\gr k^{-100}$$ is bounded above by 
$$C_{\mathrm{dot}}\gamma n{\ln^4 k}/{t^2}.$$
Furthermore, for every $2\ls k\ls n/2$ and every $k$-tuple $y_1,\dots,y_k\in S^{n-1}$, the number of indices $j\ls N$ such that 
$$\|\Proj_{\spn\{y_1,\dots,y_k\}}X_j\|_2>C_{\mathrm{up}}k^{10}$$ is at most 
$$C_{\mathrm{dot}}\gamma n k^{-19}.$$
Consequently, the above estimates hold with probability at least $1-C_{\mathrm{up}}\exp(-c_{\mathrm{low}}\sqrt n)$.
\end{proposition}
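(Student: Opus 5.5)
Both estimates are deterministic consequences of the single bound $\|A\|\ls C_{\mathrm{up}}\sqrt N$, which holds on $\Omega_{\mathrm{sv}}$. The probability statement then follows directly from Proposition~\ref{prop:sparse_singular_values}. Throughout I work on $\Omega_{\mathrm{sv}}$ and apply Lemma~\ref{lem:projection_counting} with $L=C_{\mathrm{up}}$ to the columns $X_1,\dots,X_N$.

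\emph{Second estimate.} Put $F=\spn\{y_1,\dots,y_k\}$ and $d=\dim F\ls k$. If $d=0$ there is nothing to prove, so assume $d\gr 1$. Lemma~\ref{lem:projection_counting} with $s=k^{10}/\sqrt d\gr k^{9.5}$ gives
$$\big|\{j:\|\Proj_F X_j\|_2>C_{\mathrm{up}}k^{10}\}\big|\ls \frac{Nd}{k^{20}}\ls \frac{\gamma n}{k^{19}},$$
so any $C_{\mathrm{dot}}\gr1$ works here.

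\emph{First estimate.} Fix $j$ in the counted set. Lemma~\ref{lem:dyaddecomp} applies with $Z=X_j$, since $t\gr\ln^2k\gr 1$ once $k$ is large; small $k$ can be absorbed into the constant or treated directly. So $j$ falls into one of at most $\lceil\log_2k\rceil+2$ classes.

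In the first class, $\|\Proj_F X_j\|_2\gr c_{\mathrm{dyad}}t\sqrt k/\sqrt{\ln k}$ with $d=\dim F\ls k$. I take $s$ with $C_{\mathrm{up}}s\sqrt d=c_{\mathrm{dyad}}t\sqrt k/\sqrt{\ln k}$ (slightly reduced to handle the strict inequality). Then $s\gr c\,t/\sqrt{\ln k}$, and this class has at most $CN\ln k/t^2$ elements.

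In the class indexed by $p$, we have $\|\Proj_{E_p}X_j\|_2\gr c_{\mathrm{dyad}}t\sqrt{\dim E_p}/(\ln k)^{3/2}$. Lemma~\ref{lem:projection_counting} with $F=E_p$, $d=\dim E_p$ and $s=c\,t/(\ln k)^{3/2}$ bounds this class by $CN\ln^3k/t^2$. The eigenspaces $E_p$ depend only on $y_1,\dots,y_k$ and not on $j$, so each $E_p$ is a fixed subspace and the lemma applies uniformly in $j$.

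Summing over the $O(\ln k)$ classes gives at most
$$C\frac{N\ln k}{t^2}+C(\ln k)\frac{N\ln^3k}{t^2}\ls C_{\mathrm{dot}}\frac{\gamma n\ln^4k}{t^2}.$$

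The only point needing care is that the class decomposition must be uniform over all $j$. This holds because Lemma~\ref{lem:dyaddecomp} produces subspaces from the fixed family $\{F\}\cup\{E_p\}$ determined by the $y_i$, and Lemma~\ref{lem:projection_counting} is uniform over all subspaces. The hypothesis $t\gr\ln^2k$ is not essentially needed beyond $t\gr1$.

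Since both estimates hold on $\Omega_{\mathrm{sv}}$, Proposition~\ref{prop:sparse_singular_values} gives the probability bound $1-C_{\mathrm{up}}\exp(-c_{\mathrm{low}}\sqrt n)$.
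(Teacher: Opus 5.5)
Your proof is correct and is essentially the paper's argument. On $\Omega_{\mathrm{sv}}$ you apply Lemma~\ref{lem:projection_counting} with $L=C_{\mathrm{up}}$, use Lemma~\ref{lem:dyaddecomp} to split the counted indices into the $\spn\{y_1,\dots,y_k\}$ class and the $O(\ln k)$ eigenspace classes $E_p$ with the same choices of $s$, and take $s=k^{10}/\sqrt d$ for the second estimate; you also correctly note that for $k=2$ and $t\in[\ln^2 2,1)$ the hypothesis $t\gr 1$ of Lemma~\ref{lem:dyaddecomp} fails, so the trivial bound $N$ must be absorbed into $C_{\mathrm{dot}}$, a point the paper passes over silently.
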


\begin{proof}
We work on $\Omega_{\mathrm{sv}}$. Then $\|A\|\ls C_{\mathrm{up}}\sqrt N$.
By Lemma~\ref{lem:projection_counting}, applied with $L=C_{\mathrm{up}}$, for every $1\ls d\ls n$, every $d$-dimensional subspace $F\subset\R^n$ and every $s>0$,
$$\left|\left\{1\ls j\ls N:\ \|\Proj_FX_j\|_2>C_{\mathrm{up}}s\sqrt d\right\}\right| \ls \frac{N}{s^2}.$$

Fix $2\ls k\ls n/2$, $t\gr \ln^2 k$ and $y_1,\dots,y_k\in S^{n-1}$.  
Let $E:=\spn\{y_1,\dots,y_k\}$ and $\Sigma:=\sum_{i=1}^k y_iy_i^\top$.
For every integer $p$, let $E_p$ be the span of the eigenvectors of $\Sigma$ corresponding to eigenvalues in $(2^p,2^{p+1}]$.
Let $d:=\dim E$. For a fixed $p$ with $0\ls p\ls\lceil\log_2 k\rceil$ and $E_p\neq\{0\}$, set $d_p:=\dim E_p$. 

Let $J$ be the set of indices $j\ls N$ such that $\Prob_{\sigma}\{|\langle X_j,\sum_{i=1}^k\sigma_i y_i\rangle| \gr t\sqrt k\}\gr k^{-100}$.
By Lemma~\ref{lem:dyaddecomp}, for every $j\in J$ either $\|\Proj_E X_j\|_2\gr c_{\mathrm{dyad}}\,t\sqrt k/\sqrt{\ln k}$, or there exists $0\ls p\ls \lceil\log_2 k\rceil$ such that $E_p\neq\{0\}$ and $\|\Proj_{E_p}X_j\|_2\gr c_{\mathrm{dyad}}\,t\sqrt{\dim E_p}/(\ln k)^{3/2}$.

We can estimate the number of indices $j\ls N$ that satisfy the first alternative using Lemma~\ref{lem:projection_counting}
with $s_0:=c'\frac{t\sqrt k}{C_{\mathrm{up}}\sqrt{d\ln k}}.$ Then, because $d\ls k$,
$$\left|\left\{j\ls N:\ \|\Proj_E X_j\|_2\gr c\,\frac{t\sqrt k}{\sqrt{\ln k}}\right\}\right| \ls \frac{N}{s_0^2} \ls CN\frac{\ln k}{t^2}.$$
The number of indices $j\ls N$ that satisfy the second alternative for a certain value of $p$ can be estimated using Lemma~\ref{lem:projection_counting}
for $E_p$ with $s_p:=c'\frac{t}{C_{\mathrm{up}}(\ln k)^{3/2}}$: it is bounded by
$$ \left|\left\{j\ls N:\ \|\Proj_{E_p}X_j\|_2\gr c\,\frac{t\sqrt{d_p}}{(\ln k)^{3/2}}\right\}\right| \ls \frac{N}{s_p^2} \ls CN\frac{(\ln k)^3}{t^2}.$$
Summing over at most $1+\lceil\log_2 k\rceil$ values of $p$, we get 
$$|J|\ls CN{\ln^4 k}/{t^2}=C\gamma n{\ln^4 k}/{t^2}.$$
It remains to prove the second assertion. 
Fix again $y_1,\dots,y_k\in S^{n-1}$ and put $E=\spn\{y_1,\dots,y_k\}$, $d=\dim E\ls k$. 
Let $J_0:=\{j\ls N:\ \|\Proj_E X_j\|_2>C_{\mathrm{up}}k^{10}\}$.
Writing $C_{\mathrm{up}}k^{10}=C_{\mathrm{up}}s\sqrt d$, we have $s=k^{10}/\sqrt d$.
Since $d\ls k$, $s^2\gr k^{19}$.
By Lemma~\ref{lem:projection_counting}, 
$$|J_0| \ls \frac{N}{s^2} \ls CNk^{-19}=C\gamma n k^{-19}.$$
Choosing $C_{\mathrm{dot}}\gr1$ sufficiently large, the two estimates follow on $\Omega_{\mathrm{sv}}$.
The probability bound follows from Proposition~\ref{prop:sparse_singular_values}.
\end{proof}

\subsection{The approximation event}

The following lemma is the log-concave replacement for the approximation lemma of Huang--Tikhomirov \cite[Lemma 3.9]{HT26}. 
The deterministic identity underlying the proof is the same as in Huang--Tikhomirov: for $Z=A\beta$,
$$A_T^\top Z-n\beta_T=(A_T^\top A_T-nI)\beta_T+A_T^\top A_{T^c}\beta_{T^c}.$$
In the present setting the required control of $A_T^\top A_T-nI$ is obtained from a log-concave restricted isometry estimate on the auxiliary event $\Omega_{\mathrm{app}}$ rather than from the Gaussian sparse-singular-value event used in \cite[Lemma 3.9]{HT26}.
The exponent $1/13$ is chosen only to make the later deterministic estimates compatible with the probability exponent available from the log-concave restricted isometry input.

\begin{lemma}\label{lem:approxlemma}
Assume $\gamma=N/n>1$.
There are absolute constants $C_{\mathrm{app}}\gr1$ and $c_{\mathrm{app}}>0$ with the following property. 
Put $k_0:=\left\lceil C_{\mathrm{app}}(\ln(e\gamma))^{10}\right\rceil.$
Let $A$ be the $n\times N$ matrix with independent isotropic log-concave columns $X_1,\dots,X_N$.
Then the event $\Omega_{\mathrm{app}}$ that $\|A\|\ls C_{\mathrm{app}}\sqrt N$ and simultaneously 
\begin{equation}\label{eq:approx}\left\|\frac1n A_T^\top A_T-I\right\|\ls k^{-1/13}(\ln k)^{-1/2}\end{equation}
for every $k_0\ls k\ls n/2$ and every non-empty $T\subset[N]$ with $|T|\ls nk^{-1/2}$, holds with probability 
$\Prob(\Omega_{\mathrm{app}})\gr 1-C_{\mathrm{app}}\gamma\exp(-c_{\mathrm{app}} n^{1/4})$.

Moreover, on $\Omega_{\mathrm{app}}$, for every such $k,T$ as above and every $\beta=(\beta_j)_{j\ls N}\in\R^N$, if $Z:=A\beta$, then
$$\left\| A_T^\top Z-n\beta_T\right\|_2=\left\| \big(\langle X_\ell,Z\rangle\big)_{\ell\in T} - n\beta_T \right\|_2 \ls n k^{-1/13}(\ln k)^{-1/2}\|\beta_T\|_2 + C_{\mathrm{app}}\sqrt\gamma\, n\|\beta_{T^c}\|_2. $$
\end{lemma}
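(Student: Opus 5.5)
The plan is to build $\Omega_{\mathrm{app}}$ from three events: the operator-norm event of Theorem~\ref{thm:op_norm}, one column-length event from Theorem~\ref{thm:GM_norm_deviation}, and restricted isometry events from Theorem~\ref{thm:rip}, one for each $k$. The deterministic consequence then follows from the displayed identity. For each integer $k_0\ls k\ls n/2$ set $m_k:=\lfloor nk^{-1/2}\rfloor$ and $\delta_k:=k^{-1/13}(\ln k)^{-1/2}$. Every $T$ with $|T|\ls nk^{-1/2}$ has $|T|\ls m_k$, so \eqref{eq:approx} for all admissible $T$ is exactly the statement $\delta_{m_k}(A/\sqrt n)\ls\delta_k$.

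First I dispose of trivial ranges. If $\gamma>\exp(c\,n^{1/4})$ or $n$ is bounded, the claimed bound $1-C_{\mathrm{app}}\gamma\exp(-c_{\mathrm{app}}n^{1/4})$ is negative after adjusting constants. Otherwise $N\ls\exp(\sqrt n)$, and Theorem~\ref{thm:op_norm} gives $\|A\|\ls 2C\sqrt N$ off an event of probability $\exp(-c\sqrt n)$.

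Next I apply Theorem~\ref{thm:rip} with $m=m_k$ and parameter $\theta=\delta_k/2$. The main check, which I expect to be the crux, is the deterministic term. Since $\sqrt{m_k/n}\approx k^{-1/4}$, the logarithm is at most $C(\ln(e\gamma)+\ln k)$, so it suffices to have
$$Ck^{-1/4}\big(\ln(e\gamma)+\ln k\big)\ls \tfrac12 k^{-1/13}(\ln k)^{-1/2}.$$
This reduces to $k^{9/52}\gtrsim (\ln k)^{3/2}+\ln(e\gamma)(\ln k)^{1/2}$. It holds once $k\gr C_{\mathrm{app}}(\ln(e\gamma))^{10}$ with $C_{\mathrm{app}}$ large: in that range $k^{9/52}\gr(\ln(e\gamma))^{90/52}$, which beats $\ln(e\gamma)$ times any fixed power of $\ln k$. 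This is precisely why $k_0$ has the stated form and the exponent is $1/13<1/4$.

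The remaining probabilistic terms are handled as follows.
\begin{itemize}
\item The first error term of Theorem~\ref{thm:rip} is at most $C\exp(-c\sqrt{m_k})$. Since $k\ls n/2$ gives $m_k\gr c\sqrt n$, this is at most $C\exp(-cn^{1/4})$.
\item For the column-length term I do not use a separate event for each $k$. The smallest parameter occurs at $k=n/2$, namely $\theta_{\min}\approx n^{-1/13}(\ln n)^{-1/2}$. Theorem~\ref{thm:GM_norm_deviation} with a union bound over $N$ columns then gives probability at most $CN\exp\big(-c\sqrt n\,n^{-3/13}(\ln n)^{-3/2}\big)$. Since $1/2-3/13=7/26>1/4$, this is at most $C\gamma n\exp(-cn^{1/4})$.
\end{itemize}
A union bound over the at most $n$ values of $k$ multiplies the remaining terms by $n$, which is absorbed by decreasing $c_{\mathrm{app}}$. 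Altogether $\Prob(\Omega_{\mathrm{app}}^c)\ls C_{\mathrm{app}}\gamma\exp(-c_{\mathrm{app}}n^{1/4})$.

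For the deterministic part, I write $A_T^\top Z-n\beta_T=(A_T^\top A_T-nI)\beta_T+A_T^\top A_{T^c}\beta_{T^c}$. The first term is bounded by $n\delta_k\|\beta_T\|_2$ by \eqref{eq:approx}. For the second, \eqref{eq:approx} gives $\|A_T\|\ls\sqrt{2n}$, and $\|A_{T^c}\|\ls\|A\|\ls C\sqrt{\gamma n}$. Hence the second term is at most $C\sqrt\gamma\,n\|\beta_{T^c}\|_2$, and enlarging $C_{\mathrm{app}}$ gives the stated inequality.
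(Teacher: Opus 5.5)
Your proposal is correct and follows the paper's proof essentially step for step. Both use the same choice $m_k=\lfloor nk^{-1/2}\rfloor$ with a restricted-isometry parameter of order $k^{-1/13}(\ln k)^{-1/2}$, the same verification that $k\gr C_{\mathrm{app}}(\ln(e\gamma))^{10}$ makes the deterministic term of Theorem~\ref{thm:rip} small, the same comparison of $n^{7/26}(\ln n)^{-3/2}$ against $n^{1/4}$ for the column lengths, a union bound over $k$, and the same decomposition $(A_T^\top A_T-nI)\beta_T+A_T^\top A_{T^c}\beta_{T^c}$. Your deviations are cosmetic: you bound the first error term directly via $\sqrt{m_k}\gr cn^{1/4}$, and you use the single worst threshold $\theta_{\min}$ for the column lengths, which is harmless even when the third term of Theorem~\ref{thm:rip} is union-bounded separately for each $k$, since each such term is dominated by the $\theta_{\min}$ one.
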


\begin{proof}
We may assume that $\gamma \ls\exp(cn^{1/4})$, otherwise the probability estimate is trivial. 
Absorbing the finitely many small dimensions, we also have $N=\gamma n\ls\exp(\sqrt n)$.

For $2\ls k\ls n/2$ set
$$m_k:=\lfloor nk^{-1/2}\rfloor,\qquad \theta_k:=\frac {1}4 k^{-1/13}(\ln k)^{-1/2}.$$
Since $k\ls n/2$ we have $m_k=\lfloor nk^{-1/2}\rfloor\gr c nk^{-1/2}$ after absorbing the finitely many small dimensions and also $m_k\ls nk^{-1/2}$. 
Hence $\sqrt{{m_k}/{n}}\ls k^{-1/4}$ and $m_k\sqrt{m_k/n}\gr c n k^{-3/4}$.
Since $N=\gamma n$ and $k_0:=\left\lceil C_{\mathrm{app}}(\ln(e\gamma))^{10}\right\rceil$, after increasing the absolute constant $C_{\mathrm{app}}$ if necessary, 
for every $k\gr k_0$ we have that
$$\sqrt{\frac{m_k}{n}}\ln\left(\frac{eN}{m_k\sqrt{m_k/n}}\right) \ls k^{-1/4}\ln(Ce\gamma k^{3/4}) \ls C\ln(e\gamma)k^{-1/4}\ln k\ls \frac {1}4 k^{-1/13}(\ln k)^{-1/2}.$$

Applying Theorem~\ref{thm:rip} with $m=m_k$ and with the theorem parameter equal to $\theta_k$ and using Theorem~\ref{thm:GM_norm_deviation} to control the column norms gives
\begin{align*}
\Prob\left\{\delta_{m_k}(A/\sqrt n)> \frac 12 k^{-1/13}(\ln k)^{-1/2}\right\}
&\ls C\exp\left(-c\sqrt{m_k}\ln\left(\frac{eN}{m_k\sqrt{m_k/n}}\right)\right)\\
&\quad +C\exp(-c\sqrt n)+C\gamma n\exp(-c\sqrt n\,\theta_k^3).
\end{align*}
For $k_0\ls k\ls n/2$, since $m_k\sqrt{m_k/n}\ls n k^{-3/4}$, we have
$$\ln\left(\frac{eN}{m_k\sqrt{m_k/n}}\right)\gr \ln(e\gamma k^{3/4})\gr c\ln k.$$
Therefore
$$\sqrt{m_k}\ln\left(\frac{eN}{m_k\sqrt{m_k/n}}\right)\gr c\sqrt n\,k^{-1/4}\ln k\gr c n^{1/4}.$$
Also
$$\sqrt n\,\theta_k^3\gr c\sqrt n\,k^{-3/13}(\ln k)^{-3/2}\gr c n^{7/26}(\ln n)^{-3/2}\gr c n^{1/4},$$
after decreasing the absolute constant $c>0$ and absorbing the finitely many small dimensions.
Thus
$$\Prob\left\{\delta_{m_k}(A/\sqrt n)> \frac 12 k^{-1/13}(\ln k)^{-1/2}\right\}\ls C\gamma n\exp(-c n^{1/4}). $$
A union bound over $k_0\ls k\ls n/2$ gives the simultaneous restricted isometry estimate with probability at least $1-C\gamma n^2\exp(-c n^{1/4}).$
Since the factor $n^2$ is absorbed into the exponential after decreasing $c>0$ and increasing $C>0$ this probability is at least $1-C\gamma\exp(-c n^{1/4})$.

By Theorem~\ref{thm:op_norm} and our assumption $\gamma \ls\exp(cn^{1/4})$, after increasing the absolute constant $C>0$ if necessary we also have
$$\Prob\{\|A\|>C\sqrt N\}\ls C\exp(-c\sqrt n).$$
Intersecting this event with the preceding restricted isometry event and increasing the absolute constant $C_{\mathrm{app}}$, there is an event of probability at least $1-C_{\mathrm{app}}\gamma\exp(-c_{\mathrm{app}} n^{1/4})$ on which $\|A\|\ls C_{\mathrm{app}}\sqrt N$ and simultaneously for every $k_0\ls k\ls n/2$ and every non-empty $T\subset[N]$ with $|T|\ls nk^{-1/2}$
$$\left\|\frac1nA_T^\top A_T-I\right\|\ls k^{-1/13}(\ln k)^{-1/2}.$$

It remains to prove the deterministic estimate on this event.
Fix such $k,T$ and let $\delta_k:= k^{-1/13}(\ln k)^{-1/2}$.
For $\beta\in\R^N$ put $Z=A\beta$. 
Then
$$(A_T)^\top Z-n\beta_T=(A_T^\top A_T-nI)\beta_T+A_T^\top A_{T^c}\beta_{T^c}.$$
The first term has norm at most $\delta_k n\|\beta_T\|_2$.
For the second one the restricted isometry estimate gives $\|A_T\|\ls C\sqrt n$, while $\|A\|\ls C_{\mathrm{app}}\sqrt N=C_{\mathrm{app}}\sqrt\gamma\,\sqrt n$.
Thus
$$\|A_T^\top A_{T^c}\beta_{T^c}\|_2\ls C_{\mathrm{app}}\sqrt\gamma\,n\|\beta_{T^c}\|_2.$$
Since $(A_T)^\top Z=(\langle X_\ell,Z\rangle)_{\ell\in T}$ we obtain
$$\left\| A_T^\top Z-n\beta_T\right\|_2=\left\| \big(\langle X_\ell,Z\rangle\big)_{\ell\in T}-n\beta_T\right\|_2 \ls n k^{-1/13}(\ln k)^{-1/2}\|\beta_T\|_2+C_{\mathrm{app}}\sqrt\gamma\,n\|\beta_{T^c}\|_2,$$
and the proof of the lemma is complete.
\end{proof}

\begin{remark}\rm 
If one used $\delta_k=k^{-\rho}(\ln k)^{-1/2}$, the column-norm estimate would give an exponent of order  $n^{1/2-3\rho}(\ln n)^{-3/2}$ in the worst case $k\simeq n$. To make this term at least as strong as $n^{1/4}$ one needs $\rho<1/12$. 
On the other hand, the deterministic argument below requires $\rho<1/8$ because of another error term in Proposition~\ref{prop:spansofcomp} 
which is of order $k^{-1/8}$ up to logarithms. The choice $\rho=1/13$ satisfies both restrictions and gives the best probability exponent available 
from this proof scheme, namely $\exp(-c n^{1/4})$.
\end{remark}

\begin{remark}\label{rem:approx}\rm A consequence of \eqref{eq:approx} is that on the event $\Omega_{\mathrm{app}}$ we have
$$\frac{1}{n}\|A_Tx\|_2^2 \ls (1 + k^{-1/13}(\ln k)^{-1/2}) \|x\|_2^2\ls 2\|x\|_2^2$$
for every $k_0\ls k\ls n/2$, every non-empty $T\subset[N]$ with $|T|\ls nk^{-1/2}$ and every $x\in\mathbb{R}^T$.
Therefore, $\|A_T\|\ls C\sqrt{n}$, a fact that will be used in the proof of Lemma~\ref{lem:Abetasigma}.
\end{remark}

\section{Deterministic estimates on \texorpdfstring{$\Omega$}{Omega}}

\subsection{Coordinate representations and large sets of signs}
For the rest of the argument we work on the event $\Omega:=\Omega_{\mathrm{sv}}\cap\Omega_{\mathrm{app}}.$
Whenever $\gamma\ls\exp(cn^{1/4})$, Proposition~\ref{prop:sparse_singular_values} and Lemma~\ref{lem:approxlemma} give $\Prob(\Omega)\gr 1-C\gamma\exp(-c n^{1/4}).$
In the final applications below this restriction is ensured by the lower bounds imposed on $n$.
On $\Omega$, the estimates of Proposition~\ref{prop:inradius} which are stated on $\Omega_{\mathrm{sv}}$, the estimates of Proposition~\ref{prop:dotprodstat} 
and the approximation estimates of Lemma~\ref{lem:approxlemma} all hold.

Throughout this section we work on a realization in $\Omega$. 
In particular $\rank A=n$, since $\Omega\subset\Omega_{\mathrm{sv}}$ and $s_{\min}(A^\top)>0$ on $\Omega_{\mathrm{sv}}$.

The following definitions are taken from Huang--Tikhomirov. 
They are deterministic and only depend on the polytope representation of the norm, hence they apply in the log-concave setting with $P_{N,n}$ replaced by $P_{N,n}^{\mu}$.

\begin{definition}\label{def:beta}
For every $y\in\R^n$, choose a vector $\beta(y)\in\R^N$ such that
$$y=\sum_{j=1}^N\beta_j(y)X_j \qquad \text{and} \qquad \|\beta(y)\|_1=\|y\|_{P_{N,n}^{\mu}}.$$
When the minimizer is not unique fix a Borel measurable choice.

Let ${\bf F}=(y_i)_{i\in[k]}$ be a collection of vectors in $\R^n$ and let $\sigma\in\{-1,1\}^k$.
We define
$$\beta^\sigma({\bf F}):=\beta\left(\sum_{i=1}^k\sigma_i y_i\right).$$
Thus
$$\sum_{i=1}^k\sigma_i y_i=\sum_{j=1}^N\beta_j^\sigma({\bf F})X_j \qquad \text{and} \qquad \left\|\sum_{i=1}^k\sigma_i y_i\right\|_{P_{N,n}^{\mu}}
=\|\beta^\sigma({\bf F})\|_1.$$
When the tuple ${\bf F}$ is clear from the context we write simply $\beta^\sigma$.
\end{definition}

The following definitions are also taken from Huang--Tikhomirov. 
They are purely notational and will be used to separate large coordinates of the coefficient vectors $\beta(y)$ and $\beta^\sigma({\bf F})$.

\begin{definition}\label{def:Tsigma_mdelta}
Let ${\bf F}=(y_i)_{i\in[k]}$ be a collection of vectors in $\R^n$.
For $\tau>0$ and $\sigma\in\{-1,1\}^k$ define
$$T^\sigma(\tau,{\bf F}) :=\{j\ls N:\ |\beta_j^\sigma({\bf F})|\gr \tau\}.$$
When ${\bf F}$ is clear from the context we write $T^\sigma(\tau)$.

For $y\in\R^n$, $\delta>0$ and integers $r\gr 1$ define
$$m_\delta(y,r):=\left|\left\{j\ls N:\ |\beta_j(y)|\in \frac{\delta}{n}(2^{r-1},2^r]\right\}\right| \qquad \text{and} \qquad m_\delta(y,0):=\left|\left\{j\ls N:\ |\beta_j(y)|\ls \frac{\delta}{n}\right\}\right|.$$
\end{definition}

Finally, the next definition of the exceptional set $J(t,{\bf F})$ is also taken from Huang--Tikhomirov. 
The subsequent size estimate is the corresponding log-concave version of their Remark 3.8 obtained from Proposition~\ref{prop:dotprodstat}.

\begin{definition}\label{def:Jt}
Let $2\ls k\ls n/2$, let $t\gr \ln^2 k$ and let
${\bf F}=(y_i)_{i\in[k]}$ be a tuple of unit Euclidean vectors.
Define $J(t,{\bf F})\subset[N]$ as the collection of indices $j$ satisfying at least one of the following:
\begin{itemize}
   \item at least $2^k/k^{100}$ signs $\sigma\in\{-1,1\}^k$ satisfy
   $\left|\left\langle X_j,\sum_{i=1}^k\sigma_i y_i\right\rangle\right| \gr t\sqrt k$;
   \item there exists $\sigma\in\{-1,1\}^k$ such that
   $\left|\left\langle X_j,\sum_{i=1}^k\sigma_i y_i\right\rangle\right| \gr C_{\mathrm{up}}k^{11}$.
\end{itemize}
On $\Omega$,  $|J(t,{\bf F})|\ls C_{\mathrm{dot}}\gamma n{\ln^4k}/{t^2}+C_{\mathrm{dot}}\gamma nk^{-19}.$
In particular, after increasing the absolute constant $C_{\mathrm{dot}}\gr1$,
$$|J(t,{\bf F})|\ls C_{\mathrm{dot}}\gamma n\frac{\ln(2t)\ln^4k}{t^2}+C_{\mathrm{dot}}\gamma nk^{-10}.$$
\end{definition}

We shall use the following Hilbert-space form of the Kahane--Khintchine inequality. 
It is independent of the random vectors $X_j$ and will be used only to select a large set of signs for which $\|\sum_{i=1}^k\sigma_i y_i\|_2$ has the expected order $\sqrt{k\ln k}$.

\begin{theorem}\label{thm:hilbert_khintchine}{\rm \cite[Theorem~4.6]{LT91}.}
There exists an absolute constant $C>0$ such that the following holds.
Let $H$ be a Hilbert space, let $y_1,\dots,y_k\in H$ and let $\sigma=(\sigma_1,\dots,\sigma_k)$ be uniformly distributed on $\{-1,1\}^k$.
Then
$$\left(\sum_{i=1}^k\|y_i\|_H^2\right)^{1/2}\ls C\Exp_\sigma\left\|\sum_{i=1}^k\sigma_i y_i\right\|_H,$$
and for every $p\gr2$,
$$\left(\Exp_\sigma\left\|\sum_{i=1}^k\sigma_i y_i\right\|_H^p\right)^{1/p}\ls C\sqrt p\left(\sum_{i=1}^k\|y_i\|_H^2\right)^{1/2}.$$
\end{theorem}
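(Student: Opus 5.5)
We argue directly in the finite-dimensional subspace $H_0:=\spn\{y_1,\dots,y_k\}\subset H$, which is itself a Hilbert space, so we may fix an orthonormal basis $e_1,\dots,e_d$ of $H_0$ with $d\ls k$. Write $S:=\sum_{i=1}^k\sigma_i y_i$ and $s^2:=\sum_{i=1}^k\|y_i\|_H^2$. The first step is the exact second-moment identity
$$\Exp_\sigma\|S\|_H^2=\sum_{i,i'}\Exp_\sigma(\sigma_i\sigma_{i'})\langle y_i,y_{i'}\rangle=s^2,$$
which follows from $\Exp\sigma_i\sigma_{i'}=\delta_{ii'}$. Both inequalities of the theorem will be derived from this identity together with the scalar Khintchine inequality.

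For the upper bound with $p\gr2$ we use the scalar Khintchine inequality $\|\sum_i a_i\sigma_i\|_{L_p}\ls C\sqrt p\,\|a\|_2$. This follows from Hoeffding's subgaussian tail $\Prob\{|\sum a_i\sigma_i|>u\}\ls 2e^{-u^2/(2\|a\|_2^2)}$ after integrating $p u^{p-1}$ against the tail, which gives a bound of order $\Gamma(p/2+1)^{1/p}\ls C\sqrt p$. We then expand $\|S\|_H^2=\sum_{l=1}^d\langle S,e_l\rangle^2$. Since $p/2\gr1$, Minkowski's inequality in $L_{p/2}(\sigma)$ gives
$$\big(\Exp\|S\|_H^p\big)^{2/p}=\Big\|\sum_{l}\langle S,e_l\rangle^2\Big\|_{L_{p/2}}\ls\sum_l\big\|\langle S,e_l\rangle\big\|_{L_p}^2 .$$
Here $\langle S,e_l\rangle=\sum_i\sigma_i\langle y_i,e_l\rangle$ is a scalar Rademacher sum, so each term is at most $C^2p\sum_i\langle y_i,e_l\rangle^2$. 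Summing over $l$ and applying Parseval yields $C^2p\,s^2$. Taking square roots gives the second inequality.

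For the lower bound we use a Hölder (Paley--Zygmund type) interpolation between the first and fourth moments. Writing $\|S\|^2=\|S\|^{2/3}\|S\|^{4/3}$ and applying Hölder with exponents $3/2$ and $3$ gives
$$s^2=\Exp\|S\|^2\ls(\Exp\|S\|)^{2/3}(\Exp\|S\|^4)^{1/3}.$$
By the upper bound just proved with $p=4$, we have $\Exp\|S\|^4\ls (2C)^4s^4$. Hence $s^2\ls C'(\Exp\|S\|)^{2/3}s^{4/3}$, that is, $s\ls C''\,\Exp_\sigma\|S\|_H$. This is the first inequality; when $s=0$ it is trivial.

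No step is a serious obstacle. The only place that needs care is the $\sqrt p$ dependence in the scalar Khintchine inequality. This is exactly what Hoeffding's subgaussian bound supplies, and the Minkowski step in $L_{p/2}$ passes it to the Hilbert-space setting without any loss in $p$.
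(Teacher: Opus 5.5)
Your proof is correct. The paper gives no argument for this statement. It imports it from Ledoux--Talagrand, where it is an instance of the Kahane--Khintchine inequality for Rademacher sums; that inequality holds in arbitrary normed spaces, and its general proof needs tools beyond orthogonality.

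You instead prove the Hilbert-space case from scratch. The identity $\Exp_\sigma\|S\|_H^2=s^2$ fixes the $L_2$ norm exactly. Hoeffding's subgaussian bound, combined with the Minkowski step in $L_{p/2}$ over an orthonormal basis of $\spn\{y_i\}$, gives the $\sqrt p$ upper bound with no loss. H\"older interpolation between $L_1$ and $L_4$ then gives the reverse inequality.

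Your route buys a short, self-contained argument with explicit constants. If you compute the fourth moment directly, $\Exp_\sigma\|S\|_H^4=s^4+2\sum_{i\neq i'}\langle y_i,y_{i'}\rangle^2\ls 3s^4$, you even get the first inequality with constant $\sqrt3$.

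The citation buys generality to arbitrary Banach spaces, which the paper never uses. The theorem is applied only in $\R^n$ with the Euclidean norm (Corollary~\ref{cor:approxcor}, Proposition~\ref{prop:spansofcomp}) and in the scalar case $H=\R$ (Lemma~\ref{lem:rad_l1_antic}), and your proof covers all of these.
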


The following corollary is the log-concave version of 
\cite[Corollary 3.10]{HT26}. 
The proof follows the same argument as in Huang--Tikhomirov, one applies the approximation lemma to the set of large coordinates of $\beta^\sigma$ and then uses the
Kahane--Khintchine inequality to discard a small exceptional set of signs. 
In the present setting the approximation input is Lemma~\ref{lem:approxlemma} and the admissible threshold for $\tau$ is chosen so that  $|T^\sigma(\tau,{\bf F})|\ls nk^{-1/2},$ which is the sparsity range available in $\Omega_{\mathrm{app}}$.

\begin{corollary}\label{cor:approxcor}
There is an absolute constant $C\gr1$ with the following property.
Work on a realization in $\Omega$ and define $k_0:=\left\lceil C_{\mathrm{app}}(\ln(e\gamma))^{10}\right\rceil.$
Let $k_0\ls k\ls n/2$ and let ${\bf F}:=(y_i)_{i\in[k]}$ be a $k$-tuple of vectors in $S^{n-1}$.
For $\sigma\in\{-1,1\}^k$ set $Y_\sigma:=\sum_{i=1}^k\sigma_i y_i$, $\beta^\sigma:=\beta^\sigma({\bf F})$, $L_\sigma:=\|\beta^\sigma\|_1$ and $\delta_k:=k^{-1/13}(\ln k)^{-1/2}$.
Then there is a set $R\subset\{-1,1\}^k$ with $|R|\gr 2^k(1-k^{-100})$ such that for every $\sigma\in R$ and every $\tau\gr k^{1/2}L_\sigma/n$, one has
$$ \left\| (A_{T^\sigma(\tau,{\bf F})})^\top Y_\sigma - n\beta^\sigma_{T^\sigma(\tau,{\bf F})} \right\|_2 \ls C\delta_k\sqrt{nk\ln k} + C\sqrt\gamma\, n\sqrt{\tau L_\sigma}. $$
\end{corollary}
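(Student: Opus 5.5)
Fix $\sigma$ and $\tau\gr k^{1/2}L_\sigma/n$, and write $T:=T^\sigma(\tau,{\bf F})$. The plan is to apply the deterministic part of Lemma~\ref{lem:approxlemma} to $\beta=\beta^\sigma$ and $Z=Y_\sigma=A\beta^\sigma$. This needs $|T|\ls nk^{-1/2}$, which follows from Markov's inequality on the $\ell_1$ mass: $|T|\tau\ls\|\beta^\sigma\|_1=L_\sigma$, so $|T|\ls L_\sigma/\tau\ls nk^{-1/2}$. If $T$ is empty there is nothing to prove. Otherwise Lemma~\ref{lem:approxlemma} gives
$$\|A_T^\top Y_\sigma-n\beta^\sigma_T\|_2\ls n\delta_k\|\beta^\sigma_T\|_2+C_{\mathrm{app}}\sqrt\gamma\,n\|\beta^\sigma_{T^c}\|_2 .$$
The tail term is handled by the interpolation bound $\|\beta^\sigma_{T^c}\|_2^2\ls\|\beta^\sigma_{T^c}\|_\infty\|\beta^\sigma_{T^c}\|_1\ls\tau L_\sigma$, since every coordinate outside $T$ has modulus less than $\tau$. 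This produces the term $C\sqrt\gamma\,n\sqrt{\tau L_\sigma}$.

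The main step is to bound $n\delta_k\|\beta^\sigma_T\|_2$ by $C\delta_k\sqrt{nk\ln k}$. For this I would use the sparse lower singular value on $\Omega_{\mathrm{app}}$. By \eqref{eq:approx}, $\|A_Tx\|_2^2\gr(1-\delta_k)n\|x\|_2^2\gr\frac n2\|x\|_2^2$ for $x\in\R^T$, so $\|\beta^\sigma_T\|_2\ls\sqrt{2/n}\,\|A_T\beta^\sigma_T\|_2$. Next write $A_T\beta^\sigma_T=Y_\sigma-A_{T^c}\beta^\sigma_{T^c}$. This gives
$$\|A_T\beta^\sigma_T\|_2\ls\|Y_\sigma\|_2+\|A\|\,\|\beta^\sigma_{T^c}\|_2\ls\|Y_\sigma\|_2+C_{\mathrm{app}}\sqrt{\gamma n}\sqrt{\tau L_\sigma}.$$
Multiplying by $n\delta_k\sqrt{2/n}$, the second piece contributes at most $C\sqrt\gamma\,n\sqrt{\tau L_\sigma}$, which is absorbed into the second term of the claim because $\delta_k\ls1$. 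The first piece contributes $C\delta_k\sqrt n\,\|Y_\sigma\|_2$. An alternative route would be to test the identity against $\beta_T^\sigma$ itself, but the route through the lower singular value seems cleanest.

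It remains to choose $R$ so that $\|Y_\sigma\|_2\ls C\sqrt{k\ln k}$ for $\sigma\in R$. Here I use Theorem~\ref{thm:hilbert_khintchine} with $H=\R^n$ and $\|y_i\|_2=1$, which gives $(\Exp_\sigma\|Y_\sigma\|_2^p)^{1/p}\ls C\sqrt{pk}$. Take $p=200\ln k$ and apply Markov's inequality:
$$\Prob_\sigma\{\|Y_\sigma\|_2>eC\sqrt{200k\ln k}\}\ls e^{-p}=k^{-200}\ls k^{-100}.$$
Let $R$ be the complement of this exceptional set, so $|R|\gr2^k(1-k^{-100})$. Note that $R$ does not depend on $\tau$, as the statement requires. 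Combining the three estimates gives the claim with an absolute constant $C$.

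The only delicate point is that the bound on $\|\beta_T\|_2$ must be independent of $L_\sigma$ in the first term. The lower RIP bound combined with the decomposition of $Y_\sigma$ achieves this, and the step $|T|\ls nk^{-1/2}$ is exactly where the threshold $\tau\gr k^{1/2}L_\sigma/n$ enters.
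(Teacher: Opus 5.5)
Your proposal is correct and follows essentially the same route as the paper: Markov on the $\ell_1$ mass to get $|T|\le nk^{-1/2}$, Lemma~\ref{lem:approxlemma} with the tail bound $\|\beta^\sigma_{T^c}\|_2^2\le\tau L_\sigma$, the lower restricted-isometry bound combined with $A_T\beta^\sigma_T=Y_\sigma-A_{T^c}\beta^\sigma_{T^c}$ to control $\|\beta^\sigma_T\|_2$, and Kahane--Khintchine with $p\sim\ln k$ plus Markov to produce a $\tau$-independent sign set $R$. The one detail you should state explicitly is that $\delta_k\le 1/2$ holds because $k\ge k_0$ and $C_{\mathrm{app}}$ is taken large; the paper invokes this too.
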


\begin{proof}
Fix $\sigma\in\{-1,1\}^k$ and $\tau\gr k^{1/2}L_\sigma/n$. 
Write $T:=T^\sigma(\tau,{\bf F})$, $Y:=Y_\sigma$, $L:=L_\sigma$ and $\beta:=\beta^\sigma$.
If $T=\varnothing$ there is nothing to prove.

By definition of $T$, we have $\tau\,|T|\ls\sum_{j\in T}|\beta_j|\ls L$.  
Hence $|T|\ls nk^{-1/2}$ so Lemma~\ref{lem:approxlemma} applies to this set $T$. 
Applying it with $Z=Y=A\beta$ gives
$$\left\|(A_T)^\top Y-n\beta_T\right\|_2 \ls \delta_k n\|\beta_T\|_2 + C_{\mathrm{app}}\sqrt\gamma\, n\|\beta_{T^c}\|_2. $$
Since $|\beta_j|<\tau$ on $T^c$, we have $\|\beta_{T^c}\|_2^2\ls \tau\sum_{j\in T^c}|\beta_j|\ls \tau L$ and therefore $C_{\mathrm{app}}\sqrt\gamma\, n\|\beta_{T^c}\|_2\ls C_{\mathrm{app}}\sqrt\gamma\, n\sqrt{\tau L}.$

It remains to estimate $\|\beta_T\|_2$.  
On $\Omega$, $\left\|\frac1nA_T^\top A_T-I\right\|\ls\delta_k$. 
After increasing $C_{\mathrm{app}}$ if necessary in the definition of $k_0$, we may assume that $\delta_k\ls1/2$, hence $\|A_Tu\|_2\gr c\sqrt n\,\|u\|_2$ for $u\in\R^T$.
Applying this to $u=\beta_T$ and using $A_T\beta_T=Y-A_{T^c}\beta_{T^c}$, $\|A\|\ls C\sqrt N=C\sqrt\gamma\sqrt n$ and $\|\beta_{T^c}\|_2\ls\sqrt{\tau L}$, we get 
$$\sqrt n\,\|\beta_T\|_2\ls C\|Y\|_2+C_{\mathrm{app}}\sqrt\gamma\,\sqrt n\,\sqrt{\tau L}.$$
Multiplying by $\delta_k\sqrt n$ and using $\delta_k\ls1$ gives $\delta_k n\|\beta_T\|_2\ls C\delta_k\sqrt n\,\|Y\|_2+C_{\mathrm{app}}\sqrt\gamma\,n\sqrt{\tau L}.$
Thus
$$\left\|(A_T)^\top Y-n\beta_T\right\|_2 \ls C\delta_k\sqrt n\,\|Y\|_2 + C_{\mathrm{app}}\sqrt\gamma\, n\sqrt{\tau L}. $$

By Theorem~\ref{thm:hilbert_khintchine}, for every $p\gr2$,
$$\left(\Exp_\sigma\left\|\sum_{i=1}^k\sigma_i y_i\right\|_2^p\right)^{1/p} \ls C\sqrt p\left(\sum_{i=1}^k\|y_i\|_2^2\right)^{1/2} \ls C\sqrt{pk}.$$
Choose $p=C_0\ln k$. 
By Markov's inequality, choosing $C_0$ and then $C$ sufficiently large gives
$$\Prob_\sigma\{\|Y_\sigma\|_2>C\sqrt{k\ln k}\}\ls k^{-100}.$$
Hence there is a set $R\subset\{-1,1\}^k$ with $|R|\gr 2^k(1-k^{-100})$ such that $\|Y_\sigma\|_2\ls C\sqrt{k\ln k}$ for every $\sigma\in R$. 
Substituting this into the preceding estimate gives the claim.
\end{proof}

\subsection{Large coordinates and random sign averages}

We say that a unit vector $\beta\in\R^N$ is $(\delta,\rho)$-compressible if it is within Euclidean distance $\rho$ of the set of vectors supported on at most $\delta N$ coordinates. 
Otherwise $\beta$ is called $(\delta,\rho)$-incompressible.

The following lemma is the log-concave version of Huang--Tikhomirov's incompressibility lemma \cite[Lemma 3.4]{HT26}. 
The proof is the same deterministic argument as in Huang--Tikhomirov with their Gaussian sparse singular-value event replaced by $\Omega_{\mathrm{sv}}$. 
The values of $\delta_{\mathrm{inc}}$ and $\rho_{\mathrm{inc}}$ are adjusted to the log-concave sparsity scale $c_*n/\ln^2(e\gamma)$ and to the operator-norm bound $\|A\|\ls C_{\mathrm{up}}\sqrt N$.

\begin{lemma}\label{lem:incompcomb}
Assume $\gamma=N/n\gr \gamma_0$.
Put
$$\delta_{\mathrm{inc}}:=\frac{c_*}{2\gamma\ln^2(e\gamma)}
\qquad\hbox{and}\qquad
\rho_{\mathrm{inc}}:=\frac{1-\eta_0}{4C_{\mathrm{up}}\sqrt\gamma}.$$
Work on a realization in $\Omega$.
Then every unit vector $\beta\in\R^N$ satisfying $A\beta=\sum_{j=1}^N\beta_jX_j=0$ is $(\delta_{\mathrm{inc}},\rho_{\mathrm{inc}})$-incompressible.
\end{lemma}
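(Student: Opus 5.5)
The proof is by contradiction, and the only inputs are two properties of $A$ on $\Omega_{\mathrm{sv}}$: the restricted isometry bound on sparse column submatrices and the operator norm bound $\|A\|\ls C_{\mathrm{up}}\sqrt N$.

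Let $\beta\in\R^N$ be a unit vector with $A\beta=0$, and suppose it is $(\delta_{\mathrm{inc}},\rho_{\mathrm{inc}})$-compressible. Then there is a vector $u$ supported on a set $J\subset[N]$ with $|J|\ls\delta_{\mathrm{inc}}N$ and $\|\beta-u\|_2\ls\rho_{\mathrm{inc}}$. We may take $u=\beta_J$, since restricting $\beta$ to $J$ is the closest vector supported on $J$. This gives
$$\|\beta_J\|_2\gr1-\rho_{\mathrm{inc}}\qquad\text{and}\qquad\|\beta_{J^c}\|_2\ls\rho_{\mathrm{inc}}.$$
The choice of $\delta_{\mathrm{inc}}$ makes
$$|J|\ls\delta_{\mathrm{inc}}\gamma n=\frac{c_*n}{2\ln^2(e\gamma)}\ls\frac{c_*n}{\ln^2(e\gamma)},$$
so Proposition~\ref{prop:sparse_singular_values} applies to $A_J$. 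If $J$ is empty, then $\|\beta\|_2\ls\rho_{\mathrm{inc}}<1$, which is impossible because $\beta$ is a unit vector. So $J$ is non-empty.

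Since $A\beta=0$, we have $A_J\beta_J=-A_{J^c}\beta_{J^c}$. The left side is bounded below using the lower singular value of sparse submatrices:
$$\|A_J\beta_J\|_2\gr(1-\eta_0)\sqrt n\,(1-\rho_{\mathrm{inc}}).$$
The right side is bounded above by the operator norm. Here $A_{J^c}\beta_{J^c}=A\tilde\beta$, where $\tilde\beta$ is $\beta_{J^c}$ extended by zero, so
$$\|A_{J^c}\beta_{J^c}\|_2\ls C_{\mathrm{up}}\sqrt{\gamma n}\,\rho_{\mathrm{inc}}=\frac{1-\eta_0}{4}\sqrt n.$$
Combining the two bounds gives $1-\rho_{\mathrm{inc}}\ls 1/4$. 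But $C_{\mathrm{up}}\gr1$ and $\gamma\gr\gamma_0>1$, so $\rho_{\mathrm{inc}}<1/4$, and we would need $3/4<1/4$, a contradiction.

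The argument has no real obstacle. The only delicate points are the two checks above: that the support size $\delta_{\mathrm{inc}}N$ stays inside the sparsity range $c_*n/\ln^2(e\gamma)$ of $\Omega_{\mathrm{sv}}$, and that $\rho_{\mathrm{inc}}$ is small enough that the operator norm bound cannot compensate for the sparse lower singular value bound. Both are exactly what the definitions of $\delta_{\mathrm{inc}}$ and $\rho_{\mathrm{inc}}$ are designed to ensure.
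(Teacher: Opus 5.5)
Your proof is correct and is essentially the paper's argument. The sparse lower singular value bound on $\Omega_{\mathrm{sv}}$ gives $\|A_J\beta_J\|_2\gr(1-\eta_0)(1-\rho_{\mathrm{inc}})\sqrt n$, while $A_J\beta_J=-A_{J^c}\beta_{J^c}$ and $\|A\|\ls C_{\mathrm{up}}\sqrt N$ give $\|A_J\beta_J\|_2\ls\frac14(1-\eta_0)\sqrt n$, and the only cosmetic difference is that the paper takes $J$ to be the $\lfloor\delta_{\mathrm{inc}}N\rfloor$ largest coordinates of $\beta$ whereas you take the support of the approximating sparse vector (both yield $\|\beta_{J^c}\|_2\ls\rho_{\mathrm{inc}}$).
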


\begin{proof}
Assume toward a contradiction that $\beta$ is $(\delta_{\mathrm{inc}},\rho_{\mathrm{inc}})$-compressible.
Let $J\subset[N]$ be the set of indices corresponding to the $\lfloor\delta_{\mathrm{inc}}N\rfloor$ largest coordinates of $\beta$ in absolute value.
If $J=\varnothing$, then compressibility implies $\|\beta\|_2\ls\rho_{\mathrm{inc}}<1$, a contradiction.

By the choice of $J$, compressibility implies $\|\beta_{J^c}\|_2\ls \rho_{\mathrm{inc}}$.
Hence $\|\beta_J\|_2\gr 1-\rho_{\mathrm{inc}}$.

By the choice of $\delta_{\mathrm{inc}}$,
$$ |J|\ls \delta_{\mathrm{inc}}N=\delta_{\mathrm{inc}}\gamma n\ls \frac{c_*n}{\ln^2(e\gamma)}. $$
Thus, on $\Omega_{\mathrm{sv}}$,
$$\|A_J\beta_J\|_2\gr (1-\eta_0)\sqrt n\,\|\beta_J\|_2 \gr (1-\eta_0)(1-\rho_{\mathrm{inc}})\sqrt n.$$
On the other hand, since $A\beta=0$, $A_J\beta_J=-A_{J^c}\beta_{J^c}$ and, again on $\Omega_{\mathrm{sv}}$,
$$\|A_J\beta_J\|_2\ls \|A\|\,\|\beta_{J^c}\|_2 \ls C_{\mathrm{up}}\sqrt N\,\rho_{\mathrm{inc}} = C_{\mathrm{up}}\sqrt{\gamma}\,\rho_{\mathrm{inc}}\sqrt n.$$
By the choice of $\rho_{\mathrm{inc}}$,
$$C_{\mathrm{up}}\sqrt{\gamma}\,\rho_{\mathrm{inc}}=\frac 14 (1-\eta_0),$$
while $\rho_{\mathrm{inc}}\ls1/4$ and hence
$$(1-\eta_0)(1-\rho_{\mathrm{inc}})\gr \frac34(1-\eta_0).$$
This is a contradiction.
\end{proof}

The next lemma is the log-concave version of \cite[Lemma 3.11]{HT26}. 
The proof follows the same decomposition as in Huang--Tikhomirov, after removing the exceptional set $J(t,{\bf F})$ one separates the large coordinates of $\beta^\sigma$ and applies the approximation estimate to $T^\sigma(\tau,{\bf F})$ with $\tau=t^4\|\beta^\sigma\|_1/n$. 
In the present setting the input is Corollary~\ref{cor:approxcor} so we impose the additional condition $t\gr k^{1/8}$ in order to ensure $\tau\gr k^{1/2}\|\beta^\sigma\|_1/n$. 
The terms involving $\sqrt\gamma$ and $\delta_k\sqrt{k\ln k}$ reflect the log-concave operator-norm and approximation estimates.

\begin{lemma}\label{lem:Abetasigma}
Work on a realization in $\Omega$ and define $k_0:=\left\lceil C_{\mathrm{app}}(\ln(e\gamma))^{10}\right\rceil.$
Let $k_0\ls k\ls n/2$ and let ${\bf F}:=(y_i)_{i\in[k]}$ be a $k$-tuple of vectors in $S^{n-1}$.
Set $\delta_k:=k^{-1/13}(\ln k)^{-1/2}$.
Then for every $t\gr \max\{\ln^2 k,\ k^{1/8}\}$ there is a subset $R=R(t,{\bf F})\subset\{-1,1\}^k$ with $|R|\gr 2^k(1-Ck^{-78})$ such that for every $\sigma\in R$,
$$\|A_{J^c(t,{\bf F})}\beta^\sigma_{J^c(t,{\bf F})}\|_2 \ls C\left( \sqrt\gamma+\frac{\sqrt k}{t} +\delta_k\sqrt{k\ln k} +\sqrt\gamma\,t^2\|\beta^\sigma\|_1 \right), $$
where $C\gr1$ is an absolute constant.
\end{lemma}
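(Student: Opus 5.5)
Throughout, fix $t\gr\max\{\ln^2k,k^{1/8}\}$, write $J:=J(t,{\bf F})$, $Y_\sigma:=\sum_i\sigma_iy_i$, $L_\sigma:=\|\beta^\sigma\|_1$, and put $\tau_\sigma:=t^4L_\sigma/n$. The plan is to split the coordinates of $\beta^\sigma$ outside $J$ into a small part and a large part, and to bound each contribution to $\|A_{J^c}\beta^\sigma_{J^c}\|_2$ separately.

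\emph{Setup and the small coordinates.} Let $T_\sigma:=T^\sigma(\tau_\sigma,{\bf F})$. Since $t\gr k^{1/8}$, we have $\tau_\sigma\gr k^{1/2}L_\sigma/n$, so Corollary~\ref{cor:approxcor} applies to $T_\sigma$ for every $\sigma$ in its set $R_1$ of size at least $2^k(1-k^{-100})$. Moreover $|T_\sigma|\ls L_\sigma/\tau_\sigma=n/t^4\ls nk^{-1/2}$. We decompose
$$A_{J^c}\beta^\sigma_{J^c}=A_{J^c\cap T_\sigma}\beta^\sigma_{J^c\cap T_\sigma}+A_{J^c\cap T_\sigma^c}\beta^\sigma_{J^c\cap T_\sigma^c}.$$
For the small coordinates, $\|\beta^\sigma_{T_\sigma^c}\|_2^2\ls\tau_\sigma L_\sigma$. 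Combined with $\|A\|\ls C\sqrt\gamma\sqrt n$, this bounds the second term by $C\sqrt\gamma\sqrt n\sqrt{\tau_\sigma L_\sigma}=C\sqrt\gamma\,t^2L_\sigma$.

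\emph{The large coordinates.} Put $T'_\sigma:=T_\sigma\cap J^c$. Since $|T'_\sigma|\ls nk^{-1/2}$, Remark~\ref{rem:approx} gives
$$\|A_{T'_\sigma}\beta^\sigma_{T'_\sigma}\|_2\ls C\sqrt n\,\|\beta^\sigma_{T'_\sigma}\|_2.$$
Restricting the estimate of Corollary~\ref{cor:approxcor} to the coordinates in $T'_\sigma\subset T_\sigma$ and dividing by $\sqrt n$, we get
$$\sqrt n\,\|\beta^\sigma_{T'_\sigma}\|_2\ls n^{-1/2}\big\|(\langle X_j,Y_\sigma\rangle)_{j\in T'_\sigma}\big\|_2+C\delta_k\sqrt{k\ln k}+C\sqrt\gamma\,t^2L_\sigma.$$
It remains to bound $\sum_{j\in T'_\sigma}\langle X_j,Y_\sigma\rangle^2$. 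Call a pair $(j,\sigma)$ \emph{bad} if $|\langle X_j,Y_\sigma\rangle|\gr t\sqrt k$.
\begin{itemize}
\item Good pairs contribute at most $|T'_\sigma|\,t^2k\ls (n/t^4)\,t^2k=nk/t^2$. This yields the term $\sqrt k/t$.
\item For bad pairs we use $j\notin J$. First, $|\langle X_j,Y_\sigma\rangle|<C_{\mathrm{up}}k^{11}$ for every $\sigma$. Second, each such $j$ has fewer than $2^kk^{-100}$ bad signs. Double counting gives
$$\sum_\sigma\#\{j\notin J:(j,\sigma)\ \text{bad}\}\ls N2^kk^{-100}.$$
By Markov's inequality, the set $R_2$ of signs with at most $\gamma nk^{-22}$ bad indices satisfies $|R_2|\gr 2^k(1-k^{-78})$, using $N=\gamma n$. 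For $\sigma\in R_2$ the bad contribution is at most $\gamma nk^{-22}\cdot C_{\mathrm{up}}^2k^{22}=C\gamma n$. This yields the term $\sqrt\gamma$ after dividing by $n$ and taking square roots.
\end{itemize}

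\emph{Conclusion and main obstacle.} Take $R:=R_1\cap R_2$, so $|R|\gr2^k(1-Ck^{-78})$, and sum the four contributions to obtain the stated bound. The main obstacle is the bad-pair step: the exceptional signs depend on $j$, so we cannot take a union bound over the up to $nk^{-1/2}$ indices in $T_\sigma$. The double-counting/Markov argument with the uniform cap $C_{\mathrm{up}}k^{11}$ from the second clause of Definition~\ref{def:Jt} circumvents this. Calibrating the Markov threshold at $\gamma nk^{-22}$ is exactly what produces both the $k^{-78}$ loss in $|R|$ and the $\sqrt\gamma$ term.
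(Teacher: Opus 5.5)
Your proposal is correct and follows essentially the same route as the paper's proof: a Markov/double-counting set of signs with at most $Nk^{-22}$ indices $j\in J^c$ satisfying $|\langle X_j,Y_\sigma\rangle|\gr t\sqrt k$ (the paper's $R_1$, your $R_2$), intersected with the good set of Corollary~\ref{cor:approxcor}. The rest also matches: the choice $\tau=t^4\|\beta^\sigma\|_1/n$, and the split of $J^c\cap T^\sigma(\tau,{\bf F})$ into indices with $|\langle X_j,Y_\sigma\rangle|<t\sqrt k$ and the remaining ones, the latter controlled by the cap $C_{\mathrm{up}}k^{11}$ from the second clause of Definition~\ref{def:Jt}. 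The only detail the paper handles that you omit is the case $\|\beta^\sigma\|_1=0$: there $\tau=0$ and the bound $|T|\ls L/\tau$ is meaningless, but $\beta^\sigma=0$, so the claim is trivial.
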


\begin{proof}
Fix $t\gr \max\{\ln^2 k,\ k^{1/8}\}$ and write $J:=J(t,{\bf F})$.
For $\sigma\in\{-1,1\}^k$ put $Y_\sigma:=\sum_{i=1}^k\sigma_i y_i$, $\beta:=\beta^\sigma({\bf F})$ and $L:=\|\beta^\sigma({\bf F})\|_1$.

Let $I_\sigma:=\{j\in J^c:\ |\langle X_j,Y_\sigma\rangle|\gr t\sqrt k\}$.
By the definition of $J(t,{\bf F})$, for every $j\in J^c$ one has $\Prob_\sigma\{|\langle X_j,Y_\sigma\rangle|\gr t\sqrt k\}<k^{-100}$.
Therefore $\Exp_\sigma |I_\sigma|\ls Nk^{-100}$ and Markov's inequality gives $\Prob_\sigma\{|I_\sigma|>Nk^{-22}\}\ls k^{-78}$.
Let $R_1$ be the set of signs for which $|I_\sigma|\ls Nk^{-22}$.
Then $|R_1|\gr 2^k(1-k^{-78})$.

Let $R_2$ be the good-sign set from Corollary~\ref{cor:approxcor}.
Then $|R_2|\gr 2^k(1-k^{-100})$ and for every $\sigma\in R_2$ and every $\tau\gr k^{1/2}L/n$ one has
$$\left\| (A_{T^\sigma(\tau,{\bf F})})^\top Y_\sigma - n\beta^\sigma_{T^\sigma(\tau,{\bf F})} \right\|_2 \ls C\delta_k\sqrt{nk\ln k} + C\sqrt\gamma\, n\sqrt{\tau L}. $$
Set $R:=R_1\cap R_2$. 
Then $|R|\gr 2^k(1-Ck^{-78})$.

Fix $\sigma\in R$. 
If $L=0$, then $Y_\sigma=A\beta^\sigma=0$ and the conclusion is trivial. 
Assume $L>0$ and define $\tau:=t^4L/n$. 
Since $t\gr k^{1/8}$, $\tau\gr k^{1/2}L/n$ so Corollary~\ref{cor:approxcor} applies for this value of $\tau$.

Let $T:=T^\sigma(\tau,{\bf F})=\{j\ls N:\ |\beta_j|\gr\tau\}$, $S:=J^c$ and $U:=S\cap T$. 
We decompose $A_S\beta_S=A_U\beta_U+A_{S\setminus U}\beta_{S\setminus U}$.

Since $S\setminus U\subset T^c$, we have $|\beta_j|<\tau$ on $S\setminus U$.
Thus $\|\beta_{S\setminus U}\|_2^2\ls \tau\sum_{j\in S\setminus U}|\beta_j|\ls \tau L$.
Since $\Omega\subset\Omega_{\mathrm{app}}$, we have $\|A\|\ls C_{\mathrm{app}}\sqrt N=C_{\mathrm{app}}\sqrt\gamma\sqrt n$, hence
$$\|A_{S\setminus U}\beta_{S\setminus U}\|_2 \ls C_{\mathrm{app}}\sqrt\gamma\sqrt n\sqrt{\tau L}=C_{\mathrm{app}}\sqrt\gamma\,t^2L.$$

It remains to estimate $A_U\beta_U$. 
Since $\tau\,|T|\ls\sum_{j\in T}|\beta_j|\ls L$ we have $|T|\ls L/\tau=n/t^4\ls nk^{-1/2}$. 
Hence,  Remark~\ref{rem:approx} applies to $U$, if $U\neq\varnothing$ and gives $\|A_U\|\ls C\sqrt n$. 
Therefore $\|A_U\beta_U\|_2\ls C\sqrt n\,\|\beta_U\|_2$.

Since $U\subset T$,
$$ n\|\beta_U\|_2 \ls \left\| \big(\langle X_j,Y_\sigma\rangle\big)_{j\in U} \right\|_2 + \left\| (A_T)^\top Y_\sigma - n\beta_T \right\|_2. $$
By Corollary~\ref{cor:approxcor}, $\|(A_T)^\top Y_\sigma-n\beta_T\|_2 \ls C\delta_k\sqrt{nk\ln k}+C\sqrt\gamma\,n\sqrt{\tau L}.$
Since $\tau=t^4L/n$, this yields
$$ \|A_U\beta_U\|_2 \ls \frac{C}{\sqrt n} \left\| \big(\langle X_j,Y_\sigma\rangle\big)_{j\in U} \right\|_2 + C\delta_k\sqrt{k\ln k} + C\sqrt\gamma\,t^2L. $$

We split $U=(U\cap I_\sigma)\cup(U\setminus I_\sigma)$.
For $j\in U\setminus I_\sigma$, $|\langle X_j,Y_\sigma\rangle|<t\sqrt k$.
Therefore
$$\|(\langle X_j,Y_\sigma\rangle)_{j\in U\setminus I_\sigma}\|_2 \ls t\sqrt k\sqrt{|U|} \ls t\sqrt k\sqrt{|T|} \ls \sqrt{nk}/t.$$

For $j\in J^c$, the second defining condition of $J(t,{\bf F})$ fails. 
Hence, for every $\varepsilon\in\{-1,1\}^k$, $|\langle X_j,\sum_{i=1}^k\varepsilon_i y_i\rangle|<C_{\mathrm{up}}k^{11}.$
In particular, $|\langle X_j,Y_\sigma\rangle|<C_{\mathrm{up}}k^{11}$ for $j\in J^c$. 
Since $\sigma\in R_1$, $|I_\sigma|\ls Nk^{-22}$ and therefore
$$\|(\langle X_j,Y_\sigma\rangle)_{j\in U\cap I_\sigma}\|_2 \ls C_{\mathrm{up}}k^{11}\sqrt{|I_\sigma|} \ls Ck^{11}\sqrt{Nk^{-22}} \ls C\sqrt\gamma\sqrt n.$$

Combining the two parts gives $\|(\langle X_j,Y_\sigma\rangle)_{j\in U}\|_2 \ls C\sqrt\gamma\sqrt n+C\sqrt{nk}/t.$
Consequently,
$$\|A_U\beta_U\|_2 \ls C\left(\sqrt\gamma+\frac{\sqrt k}{t}+\delta_k\sqrt{k\ln k}+\sqrt\gamma\,t^2L\right).$$
Together with $\|A_{S\setminus U}\beta_{S\setminus U}\|_2\ls C\sqrt\gamma\,t^2L$, this gives
$$\|A_S\beta_S\|_2 \ls C\left( \sqrt\gamma+\frac{\sqrt k}{t} +\delta_k\sqrt{k\ln k} +\sqrt\gamma\,t^2L \right). $$
Since $S=J^c(t,{\bf F})$ and $L=\|\beta^\sigma({\bf F})\|_1$, the proof is complete.
\end{proof}

The following proposition is the log-concave version of
\cite[Proposition 3.12]{HT26}. 
The proof follows the same deterministic scheme as in Huang--Tikhomirov, one argues by contradiction, removes the exceptional set $J(t,{\bf F})$, projects onto the orthogonal complement of $\spn\{X_j:j\in J(t,{\bf F})\}$ and uses the relative inradius estimate for the subpolytope generated by the exceptional columns. 
In the present setting the input controlling
$A_{J^c(t,{\bf F})}\beta^\sigma_{J^c(t,{\bf F})}$ is
Lemma~\ref{lem:Abetasigma} which introduces the log-concave losses $\sqrt\gamma$ and $\delta_k\sqrt{k\ln k}$. 
Consequently the lower assumption on the $P_{N,n}^{\mu}$-norms is $C_{\mathrm{comp}}(\gamma)k^{-1/13}$ with $C_{\mathrm{comp}}(\gamma)=C\sqrt\gamma$ rather than the $k^{-1/9}$ threshold appearing in the Gaussian bounded-aspect-ratio case.

\begin{proposition}\label{prop:spansofcomp}
After increasing the fixed absolute constant $\gamma_0>1$ if necessary, assume $\gamma=N/n\gr \gamma_0$.
There is an absolute constant $C\gr1$ with the following property.
Set $C_{\mathrm{comp}}(\gamma):=C\sqrt\gamma.$
Work on a realization in $\Omega$ and define $k_0:=\left\lceil C_{\mathrm{app}}(\ln(e\gamma))^{10}\right\rceil.$
Let $1\ls k\ls n/2$ and let $y_1,\dots,y_k\in S^{n-1}$ satisfy
$$\|y_i\|_{P_{N,n}^{\mu}}=\|\beta(y_i)\|_1\gr C_{\mathrm{comp}}(\gamma)k^{-1/13},\qquad 1\ls i\ls k.$$
Then
$$\Exp_\sigma\|\beta^\sigma\|_1 = \Exp_\sigma \left\| \sum_{i=1}^k\sigma_i y_i \right\|_{P_{N,n}^{\mu}} \gr k^{1/8}.$$
\end{proposition}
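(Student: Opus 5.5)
We argue by contradiction: assume $\Exp_\sigma\|\beta^\sigma\|_1<k^{1/8}$. For $k<k_0$ or small $k$, note that $\|\beta(y_i)\|_1\ls\|Y_\sigma\|$-type bounds are trivial; in fact by the triangle inequality and averaging, $\Exp_\sigma\|\beta^\sigma\|_1\gr\max_i\|y_i\|_{P}\gr C\sqrt\gamma k^{-1/13}$. So for bounded $k$ the claim follows once $C$ is large, since $r(P)\gr c_{\mathrm{rad}}$ gives $\|y\|_P\ls 1/c_{\mathrm{rad}}$ but we only need the lower bound. Hence we may assume $k\gr k_0$ is large, so that Lemma~\ref{lem:Abetasigma} is available.

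Fix $t:=k^{1/8}\ln^2 k$ (so $t\gr\max\{\ln^2k,k^{1/8}\}$), write $J:=J(t,{\bf F})$, $F:=\spn\{X_j:j\in J\}$ and $Q:=\Proj_{F^\perp}$. By Markov, $\|\beta^\sigma\|_1\ls 2k^{1/8}$ for at least half of the signs; intersecting with the set $R(t,{\bf F})$ of Lemma~\ref{lem:Abetasigma} gives a set $R'$ of measure at least $1/3$. For $\sigma\in R'$, since $Y_\sigma=A_J\beta^\sigma_J+A_{J^c}\beta^\sigma_{J^c}$ and $Q$ kills the first term,
$$\|QY_\sigma\|_2\ls\|A_{J^c}\beta^\sigma_{J^c}\|_2\ls C\bigl(\sqrt\gamma+\sqrt k/t+\delta_k\sqrt{k\ln k}+\sqrt\gamma\,t^2k^{1/8}\bigr)=:\Delta .$$
Then by Theorem~\ref{thm:hilbert_khintchine} applied in $F^\perp$ (using the lower Kahane estimate together with the $L^p$ upper bound to pass from a set of measure $1/3$ to the mean, e.g. Paley--Zygmund), $(\sum_i\|Qy_i\|_2^2)^{1/2}\ls C\Delta$, hence some $i$ (indeed most $i$) has $\|Qy_i\|_2\ls C\Delta/\sqrt k$. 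The exponents must be checked: $t^2k^{1/8}=k^{3/8}\ln^4k$, so $\Delta/\sqrt k\ls C\sqrt\gamma\,k^{-1/8}\ln^4k$ and $\delta_k\sqrt{\ln k}=k^{-1/13}$; so $\|Qy_i\|_2\ls C\sqrt\gamma\,k^{-1/13}$ roughly. This is where the restriction $\rho<1/8$ from the Remark enters; I expect the bookkeeping of $t$ and of these exponents to be the main obstacle, and may need to tune $t=k^{1/8+\epsilon}$ or rebalance so that every term is $\ll\sqrt\gamma k^{-1/13}$ times a small constant.

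Finally bound $\|y_i\|_P$ from above for this $i$ to contradict the hypothesis. Write $y_i=\Proj_Fy_i+Qy_i$. Since $|J|\ls C\gamma n\ln^5k/t^2+C\gamma nk^{-10}$, which for $\gamma$ not too large relative to $k$ is below $c_*n/\ln^2(e\gamma)$ (else reduce to a subset or use $r(P)\gr c_{\mathrm{rad}}$), Proposition~\ref{prop:inradius} gives relative inradius of $P_J:=\conv\{\pm X_j:j\in J\}$ at least $(1-\eta_0)\sqrt{n/|J|}\gr c\,t/(\sqrt\gamma\ln^{3}k)$, so $\|\Proj_Fy_i\|_P\ls\|\Proj_F y_i\|_{P_J}\ls C\sqrt\gamma\ln^3k/t=C\sqrt\gamma k^{-1/8}\ln k$. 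For the complement, $\|Qy_i\|_P\ls\|Qy_i\|_2/r(P)\ls c_{\mathrm{rad}}^{-1}\|Qy_i\|_2$. Summing, $\|y_i\|_P$ is at most a constant multiple of $\sqrt\gamma k^{-1/13}$ with a constant smaller than $C$ in $C_{\mathrm{comp}}(\gamma)$ once $C$ is chosen large (the $k^{-1/8}$ terms being lower order for $k\gr k_0$), contradicting the assumption $\|y_i\|_P\gr C_{\mathrm{comp}}(\gamma)k^{-1/13}$.
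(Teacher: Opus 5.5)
Your overall route is the paper's: contradiction with $t\approx k^{1/8}$, Lemma~\ref{lem:Abetasigma}, the projection $Q$ onto $(\spn\{X_j:j\in J\})^\perp$, the lower Kahane bound, and the splitting $y_{i_0}=\Proj_F y_{i_0}+Qy_{i_0}$ using the relative inradius of $\conv\{\pm X_j:j\in J\}$. The extra $\ln^2k$ in $t$ is harmless. However, two steps do not work as written.

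\emph{The measure-$1/3$ step.} A bound $\|QY_\sigma\|_2\ls\Delta$ on a set of signs of measure $1/3$ does not imply $(\sum_i\|Qy_i\|_2^2)^{1/2}\ls C\Delta$. For example, if $Qy_1=Qy_2\neq0$ and $Qy_i=0$ for $i\gr3$, then $QY_\sigma=0$ for half of all $\sigma$. Paley--Zygmund with $\Exp_\sigma\|QY_\sigma\|_2^4\ls 3(\sum_i\|Qy_i\|_2^2)^2$ gives only $\Prob_\sigma\{\|QY_\sigma\|_2\gr\frac12(\sum_i\|Qy_i\|_2^2)^{1/2}\}\gr 3/16$. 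So your good set must have measure above $13/16$; Markov at level $10k^{1/8}$ instead of $2k^{1/8}$ suffices. The paper's route is cleaner. The bound of Lemma~\ref{lem:Abetasigma} is affine in $\|\beta^\sigma\|_1$, so integrate it over $R$, use $\|QY_\sigma\|_2\ls k$ on $R^c$ (measure at most $Ck^{-78}$), and apply the lower Kahane inequality directly to $\Exp_\sigma\|QY_\sigma\|_2$. No Markov step is needed.

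\emph{The reduction to large $k$.} Your averaging bound $\Exp_\sigma\|\beta^\sigma\|_1\gr\max_i\|y_i\|_P\gr C\sqrt\gamma\,k^{-1/13}$ proves the claim only for $k\ls (C\sqrt\gamma)^{104/21}$, i.e.\ roughly up to $\gamma^{2.48}$. But the condition $|J|\ls c_*n/\ln^2(e\gamma)$, which you need to invoke the relative inradius bound of Proposition~\ref{prop:inradius}, can only be guaranteed from Definition~\ref{def:Jt} when $k$ is at least of order $\gamma^4$ times a power of $\ln(e\gamma)$. This leaves an uncovered range of $k$.

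That range is handled by exactly the upper bound you set aside. Since $r(P)\gr c_{\mathrm{rad}}$, every unit vector satisfies $\|y_i\|_P\ls c_{\mathrm{rad}}^{-1}$. Put $K_\gamma:=\lceil C_0\gamma^4(\ln(e\gamma))^{28}\rceil$. Then $\sqrt\gamma\,K_\gamma^{-1/13}\gr c\,\gamma^{5/26}(\ln(e\gamma))^{-28/13}$, which is bounded below by an absolute constant. Hence once $C$ is large (chosen after $C_0$), $C\sqrt\gamma\,k^{-1/13}>c_{\mathrm{rad}}^{-1}$ for all $k<K_\gamma$, and the hypothesis is vacuous there. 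This is how the paper begins, and it makes your averaging argument unnecessary. Your parenthetical ``use $r(P)\gr c_{\mathrm{rad}}$'' points this way, but you must check explicitly that the vacuity range (up to order $\gamma^{13/2}$) overlaps the range where $|J|$ is small enough. ``Reducing to a subset'' of the $y_i$ would not help: it lowers $k$ and hence strengthens the hypothesis rather than weakening it.
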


\begin{proof}
Let $P:=P_{N,n}^{\mu}$ and put $L_\gamma:=\ln(e\gamma)$.
By Proposition~\ref{prop:inradius}, $r(P)\gr c_{\mathrm{rad}}$ on $\Omega$ and hence $\|z\|_P\ls c_{\mathrm{rad}}^{-1}\|z\|_2$, $z\in\R^n.$

Choose an absolute constant $C_0\gr1$ sufficiently large and set $K_\gamma:=\left\lceil C_0\gamma^4L_\gamma^{28}\right\rceil.$
After increasing $C_0$ and $\gamma_0$ if necessary, $K_\gamma\gr k_0$, $k^{1/8}\gr\ln^2 k$ for every $k\gr K_\gamma$ and the estimates below are valid for all $k\gr K_\gamma$.

If $k<K_\gamma$, then $C_{\mathrm{comp}}(\gamma)k^{-1/13}\gr C_{\mathrm{comp}}(\gamma)K_\gamma^{-1/13}.$
Since $C_{\mathrm{comp}}(\gamma)=C\sqrt\gamma$ and $K_\gamma\ls C_0\gamma^4L_\gamma^{28}$, we have $C_{\mathrm{comp}}(\gamma)K_\gamma^{-1/13}\gr C\,\gamma^{5/26}L_\gamma^{-28/13}.$
After increasing $\gamma_0$ and then choosing the absolute constant $C$ sufficiently large, this is larger than $c_{\mathrm{rad}}^{-1}$.
Thus the hypothesis is impossible for $k<K_\gamma$.
It remains to treat $k\gr K_\gamma$.

Assume toward a contradiction that $\Exp_\sigma\|\beta^\sigma\|_1<k^{1/8}.$
Set $t:=k^{1/8}$.
Since $k\gr K_\gamma$, Lemma~\ref{lem:Abetasigma} applies.
Let $J:=J(t,(y_i)_{i\in[k]})$, let $F:=\spn\{X_j:j\in J\}$ and let $Q$ denote the orthogonal projection onto $F^\perp$.

By Definition~\ref{def:Jt},
$$|J|\ls C_{\mathrm{dot}}\gamma n\frac{\ln(2t)\ln^4k}{t^2}+C_{\mathrm{dot}}\gamma nk^{-10}.$$
Since $t=k^{1/8}$,
$$|J|\ls C\gamma n\frac{\ln^5k}{k^{1/4}}+C\gamma nk^{-10}.$$
By the choice of $K_\gamma$, after increasing $C_0$ if necessary, $|J|\ls \frac{c_*n}{\ln^2(e\gamma)}.$
Moreover, $\sqrt{\frac{|J|}{n}}\ls C\sqrt\gamma\,\frac{\ln^{5/2}k}{k^{1/8}}+C\sqrt\gamma\,k^{-5}\ls C\sqrt\gamma\,k^{-1/13}.$

Apply Lemma~\ref{lem:Abetasigma} with $t=k^{1/8}$.
There is a set $R\subset\{-1,1\}^k$ with $|R|\gr2^k(1-Ck^{-78})$ such that for every $\sigma\in R$,
$$\|A_{J^c}\beta^\sigma_{J^c}\|_2 \ls C\left( \sqrt\gamma+\frac{\sqrt k}{t} +\delta_k\sqrt{k\ln k} +\sqrt\gamma\,t^2\|\beta^\sigma\|_1 \right),$$
where $\delta_k:=k^{-1/13}(\ln k)^{-1/2}$.
Since $Y_\sigma:=\sum_{i=1}^k\sigma_i y_i=A_J\beta^\sigma_J+A_{J^c}\beta^\sigma_{J^c}$ and $QA_J\beta^\sigma_J=0$, we have $QY_\sigma=QA_{J^c}\beta^\sigma_{J^c}$.
Therefore, for every $\sigma\in R$,
$$\|QY_\sigma\|_2 \ls C\left( \sqrt\gamma+\frac{\sqrt k}{t} +\delta_k\sqrt{k\ln k} +\sqrt\gamma\,t^2\|\beta^\sigma\|_1 \right).$$

On $R^c$ we use the trivial bound $\|QY_\sigma\|_2\ls\|Y_\sigma\|_2\ls k$.
Thus
$$\Exp_\sigma\|QY_\sigma\|_2 \ls C\left( \sqrt\gamma+\frac{\sqrt k}{t} +\delta_k\sqrt{k\ln k} +\sqrt\gamma\,t^2\Exp_\sigma\|\beta^\sigma\|_1 \right) + Ck\cdot k^{-78}.$$
Using the contradiction assumption and $t=k^{1/8}$, we get
$$\Exp_\sigma\|QY_\sigma\|_2 \ls C\left(\sqrt\gamma+k^{3/8}+k^{11/26}+\sqrt\gamma\,k^{3/8}\right) \ls C\sqrt\gamma\,k^{11/26}.$$

Now $QY_\sigma=\sum_{i=1}^k\sigma_i Qy_i$.
By Theorem~\ref{thm:hilbert_khintchine},
$$\left(\sum_{i=1}^k\|Qy_i\|_2^2\right)^{1/2} \ls C\Exp_\sigma \left\| \sum_{i=1}^k\sigma_i Qy_i \right\|_2 = C\Exp_\sigma\|QY_\sigma\|_2.$$
Hence $ \left(\sum_{i=1}^k\|Qy_i\|_2^2\right)^{1/2}\ls C\sqrt\gamma\,k^{11/26}. $
Consequently, there exists $i_0\in\{1,\dots,k\}$ such that $ \|Qy_{i_0}\|_2 \ls C\sqrt\gamma\,k^{-1/13}. $

Let $u:=\Proj_F y_{i_0}$ and $v:=Qy_{i_0}$.
Then $y_{i_0}=u+v$.
By Proposition~\ref{prop:inradius}, $\|v\|_P\ls c_{\mathrm{rad}}^{-1}\|v\|_2\ls C\sqrt\gamma\,k^{-1/13}.$

It remains to estimate $\|u\|_P$.
If $J=\varnothing$, then $F=\{0\}$ and $u=0$.
Assume $J\neq\varnothing$.
Since $|J|\ls c_*n/\ln^2(e\gamma)$, Proposition~\ref{prop:inradius} gives 
$$r(\conv\{\pm X_j:j\in J\})\gr (1-\eta_0)\sqrt{\frac{n}{|J|}}$$
inside $F$.
Since $\conv\{\pm X_j:j\in J\}\subset P$, for every $w\in F$,
$$\|w\|_P\ls \|w\|_{\conv\{\pm X_j:j\in J\}} \ls C\sqrt{\frac{|J|}{n}}\|w\|_2.$$
Applying this to $w=u$ and using $\|u\|_2\ls1$, we get
$$\|u\|_P\ls C\sqrt{\frac{|J|}{n}}\ls C\sqrt\gamma\,k^{-1/13}.$$

Therefore
$$\|y_{i_0}\|_P \ls \|u\|_P+\|v\|_P \ls C\sqrt\gamma\,k^{-1/13}.$$
Choosing the absolute constant $C$ in the definition of $C_{\mathrm{comp}}(\gamma)$ sufficiently large gives
$$\|y_{i_0}\|_P<C_{\mathrm{comp}}(\gamma)k^{-1/13},$$
contradicting the hypothesis.
The contradiction proves $\Exp_\sigma\|\beta^\sigma\|_1\gr k^{1/8}.$
\end{proof}

\subsection{Anti-concentration and dyadic coefficient estimates}

We shall use the following elementary form of the Paley--Zygmund inequality.

\begin{lemma}[Paley--Zygmund]\label{lem:paley_zygmund}
Let $Z$ be a non-negative random variable with $0<\Exp Z^2<\infty$. Then for every $\theta\in(0,1)$,
$$\Prob\{Z\gr \theta \Exp Z\}\gr (1-\theta)^2\frac{(\Exp Z)^2}{\Exp Z^2}.$$
\end{lemma}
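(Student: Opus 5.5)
The plan is to apply the Cauchy--Schwarz inequality to the decomposition $Z=Z\mathbf{1}_{\{Z<\theta\Exp Z\}}+Z\mathbf{1}_{\{Z\gr\theta\Exp Z\}}$. The first piece contributes at most $\theta\Exp Z$ to the mean, since $Z\mathbf{1}_{\{Z<\theta\Exp Z\}}\ls\theta\Exp Z$ pointwise. This gives
$$\Exp Z\ls \theta\Exp Z+\Exp\big[Z\mathbf{1}_{\{Z\gr\theta\Exp Z\}}\big],$$
and hence
$$(1-\theta)\Exp Z\ls \Exp\big[Z\mathbf{1}_{\{Z\gr\theta\Exp Z\}}\big].$$

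Next I would bound the right-hand side by Cauchy--Schwarz:
$$\Exp\big[Z\mathbf{1}_{\{Z\gr\theta\Exp Z\}}\big]\ls (\Exp Z^2)^{1/2}\,\Prob\{Z\gr\theta\Exp Z\}^{1/2}.$$
Since $Z\gr0$, both sides of the previous inequality are non-negative. Squaring therefore gives
$$(1-\theta)^2(\Exp Z)^2\ls \Exp Z^2\,\Prob\{Z\gr\theta\Exp Z\}.$$
Dividing by $\Exp Z^2$ yields the claim; this is legitimate because $0<\Exp Z^2<\infty$ by hypothesis. Note that $\Exp Z\ls(\Exp Z^2)^{1/2}<\infty$, so all the quantities above are finite.

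There is no genuine obstacle here. The only points needing care are that $\Exp Z$ is finite (which follows from $\Exp Z^2<\infty$) and that non-negativity of $Z$ is used twice: once to justify squaring, and once so that the truncated part is bounded above by $\theta\Exp Z$ rather than being possibly negative.
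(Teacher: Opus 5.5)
Your proof is correct. It is the standard truncation-plus-Cauchy--Schwarz argument for the Paley--Zygmund inequality, and the paper states this lemma as a well-known elementary fact without giving a proof, so there is no different route to compare against.
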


Huang--Tikhomirov use a standard Rademacher anti-concentration estimate at this point. 
For later reference we record the precise form needed below. 
It is an elementary consequence of the Kahane--Khintchine inequality and the Paley--Zygmund inequality.

\begin{lemma}\label{lem:rad_l1_antic}
There is an absolute constant $c_{\mathrm{ac}}\in(0,1)$ with the following property.
Let $J,L$ be finite sets, let $(a_{ji})_{j\in J,i\in L}$ be real numbers and let $\sigma$ be uniform on $\{-1,1\}^L$.
Then
$$\Prob_\sigma\left\{\sum_{j\in J}\left|\sum_{i\in L}\sigma_i a_{ji}\right|\gr c_{\mathrm{ac}}\sum_{j\in J}\left(\sum_{i\in L}a_{ji}^2\right)^{1/2}\right\}\gr c_{\mathrm{ac}}.$$
\end{lemma}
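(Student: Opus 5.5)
We prove Lemma~\ref{lem:rad_l1_antic} by applying the Paley--Zygmund inequality (Lemma~\ref{lem:paley_zygmund}) to
$$Z:=\sum_{j\in J}\Big|\sum_{i\in L}\sigma_i a_{ji}\Big|.$$
For each $j$ put $s_j:=\big(\sum_{i\in L}a_{ji}^2\big)^{1/2}$ and $S:=\sum_{j\in J}s_j$. If $S=0$, then every $a_{ji}$ vanishes and the event holds with probability one. We therefore assume $S>0$.

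\textbf{Step 1: lower bound for the first moment.} Apply Theorem~\ref{thm:hilbert_khintchine} with $H=\R$ to the scalars $a_{j1},\dots$, one $j$ at a time. This gives $s_j\ls C\,\Exp_\sigma|\sum_i\sigma_i a_{ji}|$ for each $j$. Summing over $j$ and using linearity of expectation yields $\Exp Z\gr S/C$.

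\textbf{Step 2: upper bound for the second moment.} By Minkowski's inequality in $L^2(\sigma)$,
$$(\Exp Z^2)^{1/2}\ls\sum_{j\in J}\Big(\Exp\Big|\sum_i\sigma_i a_{ji}\Big|^2\Big)^{1/2}=\sum_j s_j=S,$$
where the middle equality is just orthogonality of Rademacher signs; equivalently one may invoke the $p=2$ case of Theorem~\ref{thm:hilbert_khintchine}. Note also that $\Exp Z^2>0$, since $\Exp Z>0$ by Step 1.

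\textbf{Step 3: Paley--Zygmund.} Lemma~\ref{lem:paley_zygmund} with $\theta=1/2$ gives
$$\Prob\{Z\gr \tfrac12\Exp Z\}\gr \frac14\,\frac{(\Exp Z)^2}{\Exp Z^2}\gr \frac{1}{4C^2}.$$
Since $\tfrac12\Exp Z\gr S/(2C)$, the event $\{Z\gr S/(2C)\}$ contains $\{Z\gr\tfrac12\Exp Z\}$. Setting $c_{\mathrm{ac}}:=\min\{1/(2C),1/(4C^2)\}$, which lies in $(0,1)$ after enlarging $C\gr1$, we get $\Prob_\sigma\{Z\gr c_{\mathrm{ac}}S\}\gr c_{\mathrm{ac}}$, which is the claim.

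There is no real obstacle here. The only point requiring care is to bound $\Exp Z^2$ by $S^2$ through Minkowski's inequality, not by a sum of squares; this is what makes the ratio $(\Exp Z)^2/\Exp Z^2$ dimension-free.
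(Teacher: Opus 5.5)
Your proposal is correct and follows the paper's proof essentially step for step. Both arguments apply Paley--Zygmund with $\theta=1/2$ to $Z=\sum_{j\in J}\bigl|\sum_{i\in L}\sigma_i a_{ji}\bigr|$, bound $\Exp_\sigma Z\gr cS$ from below via the scalar case of Theorem~\ref{thm:hilbert_khintchine}, and bound $(\Exp_\sigma Z^2)^{1/2}\ls S$ via Minkowski's inequality and Rademacher orthogonality.
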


\begin{proof}
Put
$$Z:=\sum_{j\in J}\left|\sum_{i\in L}\sigma_i a_{ji}\right| \qquad\hbox{and}\qquad S:=\sum_{j\in J}\left(\sum_{i\in L}a_{ji}^2\right)^{1/2}.$$
If $S=0$ there is nothing to prove.
By Theorem~\ref{thm:hilbert_khintchine}, applied in the scalar case,
$$ \Exp_\sigma Z =\sum_{j\in J}\Exp_\sigma\left|\sum_{i\in L}\sigma_i a_{ji}\right| \gr c\sum_{j\in J}\left(\sum_{i\in L}a_{ji}^2\right)^{1/2} =cS. $$
Moreover, by Minkowski's inequality and orthogonality of the Rademacher variables,
$$ \left(\Exp_\sigma Z^2\right)^{1/2} \ls \sum_{j\in J}\left(\Exp_\sigma\left|\sum_{i\in L}\sigma_i a_{ji}\right|^2\right)^{1/2} = S.$$
By Lemma~\ref{lem:paley_zygmund},
$$\Prob_\sigma\{Z\gr (\Exp_\sigma Z)/2\}\gr \frac{(\Exp_\sigma Z)^2}{4\Exp_\sigma Z^2} \gr c.$$
Since $\Exp_\sigma Z\gr cS$, renaming the absolute constant gives the result.
\end{proof}

The following lemma is the log-concave version of \cite[Lemma 3.14]{HT26}.
The proof follows the argument of Huang-Tikhomirov with the preceding Rademacher anti-concentration lemma replacing the standard anti-concentration estimate used there and with the kernel incompressibility input replaced by Lemma~\ref{lem:incompcomb}. 
The constant $c_{\mathrm{pin}}(\gamma)=c_0\gamma^{-2}$ is chosen small enough to be compatible with the log-concave incompressibility parameters $\delta_{\mathrm{inc}}$ and $\rho_{\mathrm{inc}}$.

\begin{lemma}\label{lem:generalPNbasic}
After increasing the fixed absolute constant $\gamma_0>1$ if necessary, assume $\gamma=N/n\gr \gamma_0$.
There is an absolute constant $c_0\in(0,1)$ with the following property.
Set $c_{\mathrm{pin}}(\gamma):=c_0\gamma^{-2}.$
Work on a realization in $\Omega$.
Let $J\subset[N]$ be non-empty with $|J|\ls c_{\mathrm{pin}}(\gamma)n$.
Let $y_i$, $i\in L$, be a finite collection of non-zero vectors in $\R^n$ and let $\sigma$ be uniform on $\{-1,1\}^{L}$.
Assume that
$$c_{\mathrm{pin}}(\gamma)\sum_{h\in J} \sqrt{\sum_{i\in L}\beta_h(y_i)^2} \gr \Exp_\sigma\|\beta^\sigma\|_1,$$
where $\beta^\sigma=\beta(\sum_{i\in L}\sigma_i y_i)$.
Then there are at least $c_{\mathrm{pin}}(\gamma)n$ indices $j\in J^c$ satisfying
$$\sqrt{\sum_{i\in L}\beta_j(y_i)^2} \gr \frac{c_{\mathrm{pin}}(\gamma)}{\sqrt{|J|n}} \sum_{h\in J} \sqrt{\sum_{i\in L}\beta_h(y_i)^2}.$$
\end{lemma}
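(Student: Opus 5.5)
The idea is to compare the optimal representation $\beta^\sigma$ of $Y_\sigma:=\sum_{i\in L}\sigma_iy_i$ with the linear one,
$$\widetilde\beta^\sigma:=\sum_{i\in L}\sigma_i\beta(y_i).$$
Both satisfy $A\beta^\sigma=A\widetilde\beta^\sigma=Y_\sigma$, so $w^\sigma:=\widetilde\beta^\sigma-\beta^\sigma$ lies in $\Ker A$, and Lemma~\ref{lem:incompcomb} applies to $w^\sigma/\|w^\sigma\|_2$. Throughout put
$$a_j:=\Big(\sum_{i\in L}\beta_j(y_i)^2\Big)^{1/2},\qquad S:=\sum_{h\in J}a_h,\qquad G:=\Big\{j\in J^c:\ a_j\gr \frac{c_{\mathrm{pin}}(\gamma)S}{\sqrt{|J|n}}\Big\}.$$
I will argue by contradiction, assuming $|G|<c_{\mathrm{pin}}(\gamma)n$.

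\emph{Step 1: choosing a good sign.} By Lemma~\ref{lem:rad_l1_antic}, with probability at least $c_{\mathrm{ac}}$ we have
$$\sum_{h\in J}|\widetilde\beta^\sigma_h|\gr c_{\mathrm{ac}}S.$$
By the hypothesis $\Exp_\sigma\|\beta^\sigma\|_1\ls c_{\mathrm{pin}}(\gamma)S$ and Markov's inequality, with probability at least $1-c_{\mathrm{ac}}/2$ we have
$$\|\beta^\sigma\|_1\ls 2c_{\mathrm{pin}}(\gamma)S/c_{\mathrm{ac}}.$$
If I also want a third event of probability close to $1$ (Step 3 below), it can be intersected with these, so a common $\sigma$ exists. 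For this $\sigma$, once $c_0$ is small,
$$\|w_J\|_1\gr (c_{\mathrm{ac}}-2c_{\mathrm{pin}}/c_{\mathrm{ac}})S\gr cS,\qquad\hbox{hence}\qquad \|w\|_2\gr \|w_J\|_2\gr cS/\sqrt{|J|}.$$

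\emph{Step 2: spreading from incompressibility.} The set $J\cup G$ has size at most $2c_{\mathrm{pin}}(\gamma)n=2c_0\gamma^{-2}n$. This is below $\delta_{\mathrm{inc}}N=c_*n/(2\ln^2(e\gamma))$ for small $c_0$. I will use the standard consequence of $(\delta_{\mathrm{inc}},\rho_{\mathrm{inc}})$-incompressibility: there are at least $c\delta_{\mathrm{inc}}N$ coordinates $j$ with
$$|w_j|\gr c\rho_{\mathrm{inc}}\|w\|_2/\sqrt N\gr c\rho_{\mathrm{inc}}S/\sqrt{|J|N}.$$
At least half of these lie outside $J\cup G$, since $|J\cup G|$ is much smaller than $\delta_{\mathrm{inc}}N$. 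The $\beta^\sigma$-part has $\ell_1$-norm at most $2c_{\mathrm{pin}}S/c_{\mathrm{ac}}$. Hence the number of these $j$ with $|\beta^\sigma_j|$ at least half the threshold is at most
$$C c_{\mathrm{pin}}\sqrt{|J|N}/\rho_{\mathrm{inc}}\ls Cc_0\gamma^{-2}\cdot\gamma\sqrt\gamma\,n\,C_{\mathrm{up}}.$$
This is $\ll\delta_{\mathrm{inc}}N$ once $c_0$ is small; this is where $\gamma^{-2}$ is needed. So there remain $\gtrsim\delta_{\mathrm{inc}}N$ indices $j\notin J\cup G$ with
$$|\widetilde\beta^\sigma_j|\gr c\rho_{\mathrm{inc}}S/\sqrt{|J|N}=:\lambda.$$

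\emph{Step 3 (main obstacle): contradiction with $a_j$ being small off $G$.} For $j\notin G$, $a_j<c_{\mathrm{pin}}S/\sqrt{|J|n}$, which is $\ll\lambda$ because $c_{\mathrm{pin}}\ll\rho_{\mathrm{inc}}/\sqrt\gamma$. By Chebyshev,
$$\Prob_\sigma\{|\widetilde\beta^\sigma_j|\gr\lambda\}\ls a_j^2/\lambda^2,$$
so the expected number of such $j\notin J\cup G$ is at most $\lambda^{-2}\sum_{j\notin J\cup G}a_j^2$. Making this smaller than $c\delta_{\mathrm{inc}}N$, so that it can be included in Step 1 via Markov, needs a bound of the form
$$\sum_ja_j^2=\Exp_\sigma\|\widetilde\beta^\sigma\|_2^2\lesssim\delta_{\mathrm{inc}}N\lambda^2.$$
This global $\ell_2$ control of the individual representations is not directly available. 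I would first try to reduce to it by truncation: coordinates with $a_j\ls\lambda\cdot(\delta_{\mathrm{inc}}N)^{-1/2}$-type smallness contribute negligibly in $\ell_2$ through the kernel relation, and the remaining mass is handled by the pointwise bound $a_j<c_{\mathrm{pin}}S/\sqrt{|J|n}$ together with a Kahane-type tail estimate. If that fails, I would instead replace the Chebyshev count by the symmetric-sign comparison trick of Huang--Tikhomirov, pairing $\sigma$ with $-\sigma$ on $J^c$-coordinates.
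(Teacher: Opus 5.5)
\textbf{There is a genuine gap: Step 3 is left open, and the reason you give for it being open is mistaken.} Steps 1 and 2 are sound. In particular the spreading claim in Step 2 is correct: if $T$ indexes the $\lfloor\delta_{\mathrm{inc}}N\rfloor$ largest entries of $w$, incompressibility forces some entry of $T^c$ to be at least $\rho_{\mathrm{inc}}\|w\|_2/\sqrt{2N}$, and then every entry in $T$ is at least that large.

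In Step 3 you say you need $\sum_j a_j^2\lesssim\delta_{\mathrm{inc}}N\lambda^2$ over all $j$. That bound is indeed unavailable. But the Chebyshev count only involves indices $j\notin J\cup G$. On that set the definition of $G$ gives $a_j<c_{\mathrm{pin}}S/\sqrt{|J|n}$ pointwise, and summing over at most $N$ indices yields
$$\sum_{j\notin J\cup G}a_j^2\ls \frac{c_{\mathrm{pin}}^2N}{|J|n}S^2 .$$
Take $\lambda\asymp\rho_{\mathrm{inc}}S/\sqrt{|J|N}$, which is deterministic. Then the expected number of $j\notin J\cup G$ with $|\widetilde\beta^\sigma_j|\gr\lambda/2$ is
$$\lesssim \frac{\gamma^2c_{\mathrm{pin}}^2n}{\rho_{\mathrm{inc}}^2}\asymp C_{\mathrm{up}}^2c_0^2\gamma^{-1}n .$$
This is $\ll\delta_{\mathrm{inc}}N=c_*n/(2\ln^2(e\gamma))$ for small $c_0$ and $\gamma\gr\gamma_0$. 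By Markov, the count is $<\tfrac12\delta_{\mathrm{inc}}N$ with probability at least $1-c_{\mathrm{ac}}/4$. That event can be intersected with your Step 1 event, and it then contradicts the $\gtrsim\delta_{\mathrm{inc}}N$ indices produced in Step 2. Your bound $|B|\lesssim c_{\mathrm{pin}}\sqrt{|J|N}/\rho_{\mathrm{inc}}$ on the indices where $\beta^\sigma$ is large is also fine. So none of your fallbacks (truncation, Kahane tails, pairing $\sigma$ with $-\sigma$) is needed. As written, however, the proposal does not prove the lemma, because this one estimate is exactly its crux.

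The paper uses the same pointwise-times-$N$ bound but skips the spreading-and-counting layer. It introduces
$$\widetilde J:=\{j\in J^c:\ |\beta^\sigma_j|\gr S/n\},$$
so that $\Exp_\sigma|\widetilde J|\ls c_{\mathrm{pin}}n$ by the hypothesis, and sets $J_0:=J\cup J'\cup\widetilde J$. It then bounds
$$\Exp_\sigma\|w^\sigma_{J_0^c}\|_2\ls 2\sqrt{c_{\mathrm{pin}}N/(|J|n)}\,S .$$
The $\widetilde\beta^\sigma$-part of this is the displayed $\ell_2$ bound. The $\beta^\sigma$-part comes from truncation at $S/n$ together with $|J|\ls c_{\mathrm{pin}}n$. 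Comparing with $\|w^\sigma_{J_0}\|_2\gr c_{\mathrm{ac}}S/(2\sqrt{|J|})$ shows directly that $w^\sigma/\|w^\sigma\|_2$ is $(\delta_{\mathrm{inc}},\rho_{\mathrm{inc}})$-compressible, because $32\sqrt{c_{\mathrm{pin}}\gamma}/c_{\mathrm{ac}}^2\ls\rho_{\mathrm{inc}}$ and $|J_0|\ls\delta_{\mathrm{inc}}N$. Once patched, your route is a valid alternative: it passes through the spread of incompressible kernel vectors plus a Chebyshev count. Its parameter requirement is $\gamma c_{\mathrm{pin}}^2\ll\delta_{\mathrm{inc}}\rho_{\mathrm{inc}}^2$, which $c_{\mathrm{pin}}=c_0\gamma^{-2}$ also satisfies. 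The paper's version is shorter because it contradicts compressibility directly.
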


We include the proof in order to track the dependence on $\gamma$.

\begin{proof}
Put $L_\gamma:=\ln(e\gamma)$.
Write $a:=c_{\mathrm{pin}}(\gamma)=c_0\gamma^{-2}$.
By decreasing $c_0>0$ and increasing $\gamma_0$ if necessary, we may assume that for every $\gamma\gr\gamma_0$,
$$a\ls \min\left\{\frac{c_*}{L_\gamma^2},\frac{\gamma}{2},1,\frac{c_{\mathrm{ac}}^2}{4},\frac{c_{\mathrm{ac}}c_*}{48L_\gamma^2},\frac{c_{\mathrm{ac}}^4(1-\eta_0)^2}{2^{14}C_{\mathrm{up}}^2\gamma^2}\right\}.$$
By Lemma~\ref{lem:incompcomb}, $\delta_{\mathrm{inc}}=\frac{c_*}{2\gamma L_\gamma^2}$ and $\rho_{\mathrm{inc}}=\frac{1-\eta_0}{4C_{\mathrm{up}}\sqrt\gamma}.$
The choice of $a$ gives
$$2c_{\mathrm{ac}}^{-1}a\ls c_{\mathrm{ac}}/2,\qquad \frac{24a}{c_{\mathrm{ac}}}\ls \delta_{\mathrm{inc}}\gamma,\qquad \frac{32\sqrt{a\gamma}}{c_{\mathrm{ac}}^2}\ls \rho_{\mathrm{inc}}.$$

Put $a_j:=\sqrt{\sum_{i\in L}\beta_j(y_i)^2}$ and $S:=\sum_{h\in J}a_h$.
If $S=0$, then the conclusion is trivial, since every $j\in J^c$ satisfies the claimed inequality and $|J^c|\gr N-|J|\gr (\gamma-a)n\gr an.$
Assume therefore that $S>0$.

Let $J'$ be the collection of all $j\in J^c$ such that $a_j\gr \frac{a}{\sqrt{|J|n}}S.$
We argue by contradiction and assume that $|J'|<an$.
For every $\sigma\in\{-1,1\}^{L}$ define $w^\sigma:=\sum_{i\in L}\sigma_i\beta(y_i)-\beta^\sigma.$
Then $Aw^\sigma=0$, since both coefficient vectors represent $\sum_{i\in L}\sigma_i y_i$.

By Lemma~\ref{lem:rad_l1_antic}, applied with $a_{ji}:=\beta_j(y_i)$, $j\in J$, $i\in L$, with probability at least $c_{\mathrm{ac}}$,
$$\sum_{j\in J}\left|\sum_{i\in L}\sigma_i\beta_j(y_i)\right|\gr c_{\mathrm{ac}}S.$$
On the other hand, by the assumption and Markov's inequality,
$$\Prob_\sigma\left\{\|\beta^\sigma\|_1\ls 2c_{\mathrm{ac}}^{-1}aS\right\}\gr 1-\frac{c_{\mathrm{ac}}}{2}.$$
By the choice of $a$, the two events imply $\|(w^\sigma)_J\|_1\gr \frac{c_{\mathrm{ac}}}{2}S$.
Hence
$$\Prob_\sigma\left\{\|(w^\sigma)_J\|_2\gr \frac{c_{\mathrm{ac}}}{2\sqrt{|J|}}S\right\}\gr \frac{c_{\mathrm{ac}}}{2}.$$

Next define the random set $\widetilde J:=\{j\in J^c:\ |\beta^\sigma_j|\gr S/n\}.$
Since $(S/n)|\widetilde J|\ls \|\beta^\sigma\|_1$, the assumption gives $\Exp_\sigma|\widetilde J|\ls an$.
Set $J_0:=J\cup J'\cup \widetilde J$.
Then 
\begin{equation}\label{eq:PN1}\Exp_\sigma |J_0|\ls 3an\end{equation} and since $J\subset J_0$, 
\begin{equation}\label{eq:PN2}\Prob_\sigma\left\{\|(w^\sigma)_{J_0}\|_2\gr \frac{c_{\mathrm{ac}}}{2\sqrt{|J|}}S\right\}\gr \frac{c_{\mathrm{ac}}}{2}.\end{equation}
We estimate the complementary part. The definition of $J'$ gives
$$\sum_{j\in [N]\setminus(J\cup J')}a_j^2\ls \frac{a^2N}{|J|n}S^2$$
and since $J_0^c\subset [N]\setminus(J\cup J')$ we get
$$\Exp_\sigma\left\|\left(\sum_{i\in L}\sigma_i\beta_j(y_i)\right)_{j\in J_0^c}\right\|_2 \ls \frac{a\sqrt N}{\sqrt{|J|n}}S.$$
Since $\widetilde J\subseteq J_0$ we have $\beta^{\sigma}_j|\ls \frac{S}{n}$ on $J_0^c$, and hence 
$$\|(\beta^\sigma_j)_{j\in J_0^c}\|_2\ls \frac{\sqrt N}{n}S \ls \frac{\sqrt{aN}}{\sqrt{|J|n}}S.$$
because $|J|\ls an$. Since $a\ls 1$, it follows that 
\begin{equation}\label{eq:PN3}\Exp_\sigma\|(w^\sigma)_{J_0^c}\|_2 \ls \frac{2\sqrt{aN}}{\sqrt{|J|n}}S.\end{equation}
By Markov's inequality, from \eqref{eq:PN1} with probability at least $1-c_{\mathrm{ac}}/8$ one has $|J_0|\ls \frac{24a}{c_{\mathrm{ac}}}n$ 
and from \eqref{eq:PN3} with probability at least $1-c_{\mathrm{ac}}/8$ one has
$$\|(w^\sigma)_{J_0^c}\|_2 \ls \frac{16\sqrt{aN}}{c_{\mathrm{ac}}\sqrt{|J|n}}S.$$
Combining these estimates with the lower bound on $\|(w^\sigma)_{J_0}\|_2$ from \eqref{eq:PN2}, we can choose a realization of $\sigma$ such that
$$\|(w^\sigma)_{J_0}\|_2\gr \frac{c_{\mathrm{ac}}}{2\sqrt{|J|}}S>0,\qquad |J_0|\ls \frac{24a}{c_{\mathrm{ac}}}n, \qquad \text{and} \qquad \|(w^\sigma)_{J_0^c}\|_2 \ls \frac{16\sqrt{aN}}{c_{\mathrm{ac}}\sqrt{|J|n}}S.
$$

For this choice of $\sigma$, the vector $w^\sigma$ is a non-zero vector in $\Ker A$.
Moreover,
$$\frac{\|(w^\sigma)_{J_0^c}\|_2}{\|w^\sigma\|_2} \ls \frac{\|(w^\sigma)_{J_0^c}\|_2}{\|(w^\sigma)_{J_0}\|_2} \ls \frac{32\sqrt{aN}}{c_{\mathrm{ac}}^2\sqrt n} = \frac{32\sqrt{a\gamma}} {c_{\mathrm{ac}}^2} \ls \rho_{\mathrm{inc}}.$$
Also, since $N=\gamma n$, $ |J_0|\ls \frac{24a}{c_{\mathrm{ac}}}n\ls \delta_{\mathrm{inc}}N.$
Thus the Euclidean normalization of $w^\sigma$ is $(\delta_{\mathrm{inc}},\rho_{\mathrm{inc}})$-compressible.
This contradicts Lemma~\ref{lem:incompcomb}. Hence $|J'|\gr an$, which is precisely the claimed conclusion.
\end{proof}

The following lemma is the log-concave version of \cite[Lemma 3.15]{HT26}. 
The proof follows the same two-case truncation argument as in Huang--Tikhomirov. 
In the first case one truncates the large coordinates of the coefficient vectors $\beta(y_i)$, normalizes the resulting vectors and uses the in-radius lower bound together with the operator-norm estimate on $\Omega_{\mathrm{sv}}$. 
In the second case, the large coordinates produce a small set $J$ to which Lemma~\ref{lem:generalPNbasic} can be applied giving a contradiction. 
The only changes are quantitative, the constants now depend on $\gamma$ through $c_{\mathrm{pin}}(\gamma)$ and through the log-concave operator-norm bound $\|A\|\ls C_{\mathrm{up}}\sqrt N$.

\begin{lemma}\label{lem:stairs}
After increasing the fixed absolute constant $\gamma_0>1$ if necessary, assume $\gamma=N/n\gr \gamma_0$.
Let $c_{\mathrm{pin}}(\gamma)$ be the constant from Lemma~\ref{lem:generalPNbasic}.
There are absolute constants $c_{\mathrm{stairs}}\in(0,1/256)$ and $C_0\gr1$ with the following property.
Set $C_{\mathrm{stairs}}(\gamma):=\left\lceil C_0\gamma^{8/c_{\mathrm{stairs}}}\right\rceil.$
Work on a realization in $\Omega$.
Let $L$ be a finite index set with $|L|\gr C_{\mathrm{stairs}}(\gamma)$, let $\delta>0$ and let $y_i\in S^{n-1}$, $i\in L$, satisfy $\|y_i\|_{P_{N,n}^{\mu}}\in[\delta,2\delta]$.
Assume that, for every $i\in L$ and every integer $0\ls r\ls \lfloor \log_2\sqrt{|L|}\rfloor$ we have $m_\delta(y_i,r)\ls |L|^{\alpha}2^{-2r}n$
for some $0<\alpha\ls c_{\mathrm{stairs}},$ and that for every $v=(v_i)_{i\in L}$,
\begin{equation}\label{eq:stairs-1}\left\|\sum_{i\in L}v_i y_i\right\|_{P_{N,n}^{\mu}} \ls \delta\|v\|_\infty |L|^{\alpha}.\end{equation}
Then there is a collection $\widetilde y_i$, $i\in U$, of unit Euclidean vectors with $|U|=\lfloor |L|^{1/16}\rfloor$ such that
$$\|\widetilde y_i\|_{P_{N,n}^{\mu}} \gr C_{\mathrm{stairs}}(\gamma)^{-1}|L|^{-\alpha/2}(\ln |L|)^{-1} \qquad (i\in U),$$
and for every choice of signs $\sigma_i$, $i\in U$,
$$\left\|\sum_{i\in U}\sigma_i\widetilde y_i\right\|_{P_{N,n}^{\mu}} \ls C_{\mathrm{stairs}}(\gamma)|L|^{\alpha}.$$
\end{lemma}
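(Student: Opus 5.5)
The plan follows the two-case truncation scheme of \cite[Lemma 3.15]{HT26}, tracking $\gamma$ through $c_{\mathrm{pin}}(\gamma)=c_0\gamma^{-2}$ and $\|A\|\ls C_{\mathrm{up}}\sqrt N$. Write $m:=|L|$ and fix a threshold level $r_*:=\lfloor\log_2 m^{1/2}\rfloor$, so that $2^{r_*}\delta/n$ is of order $\sqrt m\,\delta/n$. For each $i\in L$ split
\[
\beta(y_i)=\beta^{\mathrm{sm}}(y_i)+\beta^{\mathrm{lg}}(y_i),
\]
where the large part collects the coordinates with $|\beta_j(y_i)|>2^{r_*}\delta/n$. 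The hypothesis $m_\delta(y_i,r)\ls m^{\alpha}2^{-2r}n$ yields two estimates for the small part. First, $\|\beta^{\mathrm{sm}}(y_i)\|_2^2\ls \sum_{r\ls r_*}m^\alpha 2^{-2r}n\,(2^r\delta/n)^2\ls C m^\alpha(\ln m)\delta^2/n$. Second, its $\ell_1$-mass is comparable to $\delta$ up to logarithms. The dichotomy is whether the large parts carry a non-negligible share of the $\ell_1$-norm for many $i$.

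\textbf{Case 1: for at least half of $i\in L$, the small part keeps $\ell_1$-mass at least $\delta/2$.} Set $z_i:=A\beta^{\mathrm{sm}}(y_i)$ and take $\widetilde y_i:=z_i/\|z_i\|_2$.
\begin{itemize}
\item Upper bound on signed sums. Since the supports are arbitrary, $\|\sum\sigma_i\beta^{\mathrm{sm}}(y_i)\|_2$ is not directly controlled. Instead I will select a subfamily $U$ of size $\lfloor m^{1/16}\rfloor$ of indices whose small parts are nearly orthogonal; a greedy choice should work, using that each small part has $\ell_\infty$-norm at most $\sqrt m\,\delta/n$. Then $\|\sum_{i\in U}\sigma_i z_i\|_2\ls C_{\mathrm{up}}\sqrt N\,(|U|m^\alpha\ln m\,\delta^2/n)^{1/2}$. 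Combined with $\|\cdot\|_P\ls c_{\mathrm{rad}}^{-1}\|\cdot\|_2$ and a lower bound on $\|z_i\|_2$, this gives the bound $C_{\mathrm{stairs}}(\gamma)m^\alpha$ after normalization. The alternative route is to bound $\|\sum\sigma_i z_i\|_P\ls\|\sum\sigma_i y_i\|_P+\|\sum\sigma_iA\beta^{\mathrm{lg}}\|_P$, using \eqref{eq:stairs-1} for the first term.
\item Lower bound on $\|\widetilde y_i\|_P$. The set where $|\beta_j|\gr\delta/n$ has size at most $m^\alpha n$ but carries mass at least $\delta/4$. An $\ell_1$/$\ell_2$ comparison then gives $\|\beta^{\mathrm{sm}}\|_2\gr c\,\delta\,m^{-\alpha/2}(\ln m)^{-1/2}/\sqrt n$. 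Applying Lemma~\ref{lem:incompcomb} to $\ker A$, the $\|\cdot\|_P$-norm of $z_i$ is at least $c\gamma^{-C}$ times $\|\beta^{\mathrm{sm}}(y_i)\|_1$, while $\|z_i\|_2\ls C_{\mathrm{up}}\sqrt N\|\beta^{\mathrm{sm}}\|_2$. The quotient gives $\|\widetilde y_i\|_P\gr C_{\mathrm{stairs}}(\gamma)^{-1}m^{-\alpha/2}(\ln m)^{-1}$.
\end{itemize}

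\textbf{Case 2: for at least half of $i$, the large coordinates carry $\ell_1$-mass at least $\delta/2$.} Let $J:=\bigcup_i\{j:|\beta_j(y_i)|>2^{r_*}\delta/n\}$. The counting hypothesis gives $|J|\ls \sum_i m^\alpha 2^{-2r_*}n\cdot\log m\ls C m^{\alpha}(\ln m)\,n$, which is too large. I will therefore restrict to a random subfamily $L'\subset L$ of size about $m^{1/4}$, so that $|J|\ls c_{\mathrm{pin}}(\gamma)n$ once $m\gr C_{\mathrm{stairs}}(\gamma)$ and $\alpha\ls c_{\mathrm{stairs}}$. This restriction is exactly where the exponent $8/c_{\mathrm{stairs}}$ on $\gamma$ enters.
\begin{itemize}
\item By Cauchy--Schwarz, $\sum_{h\in J}(\sum_{i\in L'}\beta_h(y_i)^2)^{1/2}\gr |L'|^{-1/2}\sum_i\|\beta^{\mathrm{lg}}(y_i)\|_1\gtrsim |L'|^{1/2}\delta$.
\item By contrast, \eqref{eq:stairs-1} gives $\Exp_\sigma\|\beta^\sigma\|_1\ls\delta m^\alpha$, which is smaller by a factor $c_{\mathrm{pin}}(\gamma)$ once $|L'|^{1/2}\gg \gamma^2m^\alpha$.
\item Lemma~\ref{lem:generalPNbasic} then produces $c_{\mathrm{pin}}(\gamma)n$ indices $j\notin J$ with $(\sum_i\beta_j(y_i)^2)^{1/2}\gr c_{\mathrm{pin}}|L'|^{1/2}\delta/\sqrt{|J|n}$.
\item Each such $j$ has all coordinates below $\sqrt m\,\delta/n$. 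Summing over the $c_{\mathrm{pin}}n$ indices and comparing with $\sum_i\|\beta(y_i)\|_1\ls 2|L'|\delta$, together with the counting bounds on $m_\delta(\cdot,r)$, should yield a contradiction. This rules out Case 2, so Case 1 holds.
\end{itemize}

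The main obstacle is the Case 1 upper bound on $\|\sum_{i\in U}\sigma_i\widetilde y_i\|_P$. Passing from $y_i$ to the truncated, normalized vectors loses the structure in \eqref{eq:stairs-1}, and I expect to need the second route: control $\|\sum\sigma_iA\beta^{\mathrm{lg}}(y_i)\|_P$ by the $\ell_1$-mass of the large parts, which is small in Case 1. I also expect to need to choose $U$ so that the normalization factors $\|z_i\|_2$ are comparable, for example by dyadic pigeonholing over their values, which costs only a $\ln m$ factor.
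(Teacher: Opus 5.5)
\paragraph{The gap is in Case~1.} The cause is where you place the dichotomy. Splitting at $\ell_1$-mass $\delta/2$ still lets the large parts of Case~1 indices carry mass up to $3\delta/2$. Then neither of your routes gives $\|\sum_{i\in U}\sigma_i\widetilde y_i\|_P\ls C_{\mathrm{stairs}}(\gamma)|L|^\alpha$.

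\emph{The Euclidean route fails however $U$ is chosen.} For unit vectors, some sign pattern has $\|\sum_{i\in U}\sigma_i\widetilde y_i\|_2\gr\sqrt{|U|}$. So the best this route can give is of order $|L|^{1/32}$. That is far larger than $|L|^{\alpha}$ for $\alpha<1/256$, and it would be useless in Theorem~\ref{thm:conditional_gluskinlinfty}, where the bound must stay below $q^{1/8}\approx|L'|^{1/128}$.

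\emph{The triangle-inequality route leaves a large error.} It produces $\sum_{i\in U}\|\beta^{\mathrm{lg}}(y_i)\|_1/\|z_i\|_2$, which is of order $|U|\approx|L|^{1/16}$ when each large part has mass of order $\delta$.

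\emph{The missing idea} is to put the threshold at $\varepsilon\delta$ with $\varepsilon=|L|^{-1/16}$, matched to $|U|=\lfloor|L|^{1/16}\rfloor$; this is the paper's choice. If at least half the indices have large-part mass $\ls\varepsilon\delta$, then $\|y_i-z_i\|_P\ls\varepsilon\delta$. Hence $\|z_i\|_P\gr\delta/2$ and $\|z_i\|_2\gr c_{\mathrm{rad}}\delta/2$. Applying \eqref{eq:stairs-1} with the coefficients $v_i=\sigma_i/\|z_i\|_2$, which satisfy $\|v\|_\infty\ls C/\delta$, gives the bound $C\varepsilon|U|+C|L|^\alpha$. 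You need no near-orthogonality and no pigeonholing of the normalizations, because \eqref{eq:stairs-1} holds for arbitrary $v$.

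A side remark: Lemma~\ref{lem:incompcomb} is not what gives $\|z_i\|_P\gtrsim\|\beta^{\mathrm{sm}}(y_i)\|_1$, since incompressibility of $\Ker A$ says nothing of that kind. It holds with equality because the restriction of an $\ell_1$-minimal representation is again $\ell_1$-minimal.

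\paragraph{Consequences for Case~2.} With the threshold $\varepsilon\delta$, Case~2 must exclude a much weaker situation: large parts of mass $>|L|^{-1/16}\delta$ for half the indices. The Cauchy--Schwarz lower bound is then only $\varepsilon\delta\sqrt{|U_2|}$. To beat $c_{\mathrm{pin}}(\gamma)^{-1}\delta|L|^\alpha$ you need $|U_2|$ a bit above $|L|^{1/8}$; the paper takes $\lfloor|L|^{1/8+3c_{\mathrm{stairs}}}\rfloor$. For the final comparison you also need $|U_2|\ll|L|^{3/16-\alpha}$.

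Your Case~2 also needs two repairs.
\begin{itemize}
\item \emph{Bounding $|J|$.} The counting hypothesis says nothing about levels $r>\lfloor\log_2\sqrt{|L|}\rfloor$, and these are exactly the large coordinates. So $|J|$ must be bounded through $\|\beta(y_i)\|_1\ls 2\delta$: each $i$ has at most $2n/2^{\widetilde r}$ large coordinates, which gives $|J|\ls Cn|U_2|/\sqrt{|L|}$.
\item \emph{The final comparison.} With this correct bound, your choice $|L'|\approx|L|^{1/4}$ breaks the $\ell_1$ comparison. Lemma~\ref{lem:generalPNbasic} yields $\sum_{j\in J^c}\sum_{i\in L'}|\beta_j(y_i)|\gr c\,c_{\mathrm{pin}}(\gamma)^2\delta|L|^{1/4}$, even at threshold $\delta/2$. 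The dyadic hypothesis only bounds the same sum by $2|L'|\delta|L|^\alpha\approx 2\delta|L|^{1/4+\alpha}$, so there is no contradiction. You must either take the subfamily smaller or compare $\ell_2$-masses, using your bound on $\|\beta^{\mathrm{sm}}(y_i)\|_2^2$.
\end{itemize}
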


We include the proof in order to track the dependence on $\gamma$. Recall from Definition~\ref{def:Tsigma_mdelta}
that for $y\in\R^n$, $\delta>0$ and integers $r\gr 1$ we set
$$m_\delta(y,r):=\left|\left\{j\ls N:\ |\beta_j(y)|\in \frac{\delta}{n}(2^{r-1},2^r]\right\}\right| \qquad \text{and} \qquad m_\delta(y,0):=\left|\left\{j\ls N:\ |\beta_j(y)|\ls \frac{\delta}{n}\right\}\right|.$$

\begin{proof}
Let $P:=P_{N,n}^{\mu}$ and write $c_{\mathrm{pin}}:=c_{\mathrm{pin}}(\gamma)$.
The constants $c_{\mathrm{stairs}}\in(0,1/256)$ and $C_0\gr1$ are those fixed in the statement.
Set $\widetilde r:=\lfloor \log_2\sqrt{|L|}\rfloor$ and $\varepsilon:=|L|^{-1/16}$.

Since $c_{\mathrm{pin}}(\gamma)=c_0\gamma^{-2}$, after increasing $C_0$ and $\gamma_0$ if necessary, the assumption $|L|\gr C_{\mathrm{stairs}}(\gamma)$ implies $|L|\gr C\max\left\{2,\gamma^8,c_{\mathrm{pin}}(\gamma)^{-4/c_{\mathrm{stairs}}}\right\}$ with any absolute constant $C>0$ needed below.

By Proposition~\ref{prop:inradius}, $r(P)\gr c_{\mathrm{rad}}$, hence $\|z\|_P\ls c_{\mathrm{rad}}^{-1}\|z\|_2$, $z\in\R^n.$

We split the proof into two cases. First assume that for at least $|L|/2$ indices $i\in L$ one has 
\begin{equation}\label{eq:Lstairs-1}\sum_{j=1}^N|\beta_j(y_i)|{\bf 1}_{\{|\beta_j(y_i)|>\delta 2^{\widetilde r}/n\}}\ls \varepsilon\delta.\end{equation}
Let $L_1$ be this set of indices.
For $i\in L_1$ define
$$y_i':=\sum_{j=1}^N\beta_j(y_i){\bf 1}_{\{|\beta_j(y_i)|\ls \delta 2^{\widetilde r}/n\}}X_j.$$
From \eqref{eq:Lstairs-1} we have $\|y_i-y_i'\|_P\ls \varepsilon\delta$.
From $|L|\gr C_{\mathrm{stairs}}(\gamma)$ we get $\varepsilon=|L|^{-1/16}\ls1/2$. Since $\|y_i\|_P\gr\delta$, it follows that $\|y_i'\|_P\gr\delta/2$.
By the preceding in-radius bound, $\|y_i'\|_2\gr c\delta$.

We next prove the corresponding upper bound for $\|y_i'\|_2$.
Since $\Omega\subset\Omega_{\mathrm{sv}}$, $\|A\|\ls C_{\mathrm{up}}\sqrt N=C_{\mathrm{up}}\sqrt\gamma\,\sqrt n$.
Therefore
$$\|y_i'\|_2\ls C\sqrt\gamma\,\sqrt n\left\|\left(\beta_j(y_i){\bf 1}_{\{|\beta_j(y_i)|\ls \delta 2^{\widetilde r}/n\}}\right)_{j\le N}\right\|_2.$$
Using the dyadic decomposition and the assumption on $m_\delta(y_i,r)$,
$$\|y_i'\|_2\ls C\sqrt\gamma\,\sqrt n\sum_{r=0}^{\widetilde r}\sqrt{m_\delta(y_i,r)}\,\frac{\delta}{n}2^r \ls C\sqrt\gamma\,|L|^{\alpha/2}\delta(\widetilde r+1).$$
Thus, for $\widetilde y_i:=y_i'/\|y_i'\|_2$, $i\in L_1$, we have $\|\widetilde y_i\|_2=1$ and
$$\|\widetilde y_i\|_P=\frac{\|y_i'\|_P}{\|y_i'\|_2} \gr C^{-1}\gamma^{-1/2}|L|^{-\alpha/2}(\ln |L|)^{-1} \gr C_{\mathrm{stairs}}(\gamma)^{-1}|L|^{-\alpha/2}(\ln |L|)^{-1}.$$

Let $U\subset L_1$ be any subset with $|U|=\lfloor |L|^{1/16}\rfloor$, which is possible because $|L_1|\gr |L|/2$ is sufficiently large by the choice of $C_{\mathrm{stairs}}(\gamma)$.
For every choice of signs $\sigma_i$, $i\in U$, we have
\begin{align*}
\left\|\sum_{i\in U}\sigma_i\widetilde y_i\right\|_P
&\ls \sum_{i\in U}\frac{\|y_i-y_i'\|_P}{\|y_i'\|_2} +\left\|\sum_{i\in U}\sigma_i\frac{y_i}{\|y_i'\|_2}\right\|_P\\
&\ls C\varepsilon |U|+\delta |L|^\alpha\left\|\left(\frac{\sigma_i}{\|y_i'\|_2}\right)_{i\in U}\right\|_\infty \ls C+C|L|^\alpha\ls C_{\mathrm{stairs}}(\gamma)|L|^\alpha, 
\end{align*}
using the hypothesis \eqref{eq:stairs-1}. This gives the desired conclusion in the first case.

It remains to rule out the complementary case.
Assume that for at least $|L|/2$ indices $i\in L$ one has
\begin{equation}\label{eq:Lstairs-2}\sum_{j=1}^N|\beta_j(y_i)|{\bf 1}_{\{|\beta_j(y_i)|>\delta 2^{\widetilde r}/n\}}> \varepsilon\delta.\end{equation}
Let $L_2$ be this set of indices.
Choose $U_2\subset L_2$ with $|U_2|=\lfloor |L|^{1/8+3c_{\mathrm{stairs}}}\rfloor$.
This is again possible because $|L_2|\gr |L|/2$ is sufficiently large by the choice of $C_{\mathrm{stairs}}(\gamma)$.
For every choice of signs $\sigma_i$, $i\in U_2$, the hypothesis \eqref{eq:stairs-1} gives
$$\left\|\sum_{i\in U_2}\sigma_i y_i\right\|_P\ls \delta |L|^\alpha.$$

Let $J$ be the collection of all $j\in[N]$ such that $|\beta_j(y_i)|>\delta 2^{\widetilde r}/n$ for at least one $i\in U_2$.
Since $\|\beta(y_i)\|_1=\|y_i\|_P\ls2\delta$, for every fixed $i\in U_2$ the number of such coordinates is at most $2n/2^{\widetilde r}$.
Hence
\begin{equation}\label{eq:Lstairs-3}|J|\ls |U_2|\frac{2n}{2^{\widetilde r}}\ls Cn\frac{|U_2|}{\sqrt{|L|}}\ls Cn|L|^{-3/8+3c_{\mathrm{stairs}}}\ls c_{\mathrm{pin}}n,\end{equation}
where the last inequality follows from the choice of $C_{\mathrm{stairs}}(\gamma)$.
The set $J$ is non-empty by the definition of the second case.

We check the assumption of Lemma~\ref{lem:generalPNbasic}, with $L$ replaced by $U_2$.
For every $h\in J$,
$$\sqrt{\sum_{i\in U_2}\beta_h(y_i)^2}\gr |U_2|^{-1/2}\sum_{i\in U_2}|\beta_h(y_i)|.$$
Therefore, using \eqref{eq:Lstairs-2} we get
$$\sum_{h\in J}\sqrt{\sum_{i\in U_2}\beta_h(y_i)^2} \gr \frac{1}{\sqrt{|U_2|}}\sum_{i\in U_2}\sum_{h\in J}|\beta_h(y_i)| \gr \varepsilon\delta\sqrt{|U_2|}.$$
Since $\varepsilon=|L|^{-1/16}$ and $|U_2|\gr c|L|^{1/8+3c_{\mathrm{stairs}}}$, the last quantity is at least $c\delta |L|^{3c_{\mathrm{stairs}}/2}$.
As $\alpha\ls c_{\mathrm{stairs}}$, the choice of $C_{\mathrm{stairs}}(\gamma)$ gives
$$c_{\mathrm{pin}}\sum_{h\in J}\sqrt{\sum_{i\in U_2}\beta_h(y_i)^2}\gr \delta |L|^\alpha \gr \Exp_\sigma\|\beta^\sigma\|_1.$$
Thus Lemma~\ref{lem:generalPNbasic} applies.

We obtain at least $c_{\mathrm{pin}}n$ indices $j\in J^c$ satisfying
\begin{equation}\label{eq:Lstairs-4}\sum_{i\in U_2}|\beta_j(y_i)| \gr \sqrt{\sum_{i\in U_2}\beta_j(y_i)^2} 
\gr \frac{c_{\mathrm{pin}}}{\sqrt{|J|n}}\sum_{h\in J}\sqrt{\sum_{i\in U_2}\beta_h(y_i)^2} 
\gr \frac{c_{\mathrm{pin}}\varepsilon\delta\sqrt{|U_2|}}{\sqrt{|J|n}} \gr \frac{c_{\mathrm{pin}}\varepsilon\delta |L|^{1/4}}{n}.\end{equation}
In the last step we used the estimate $|J|\ls Cn|U_2|/\sqrt |L|$ which appears in \eqref{eq:Lstairs-3}.

On the other hand, if $j\in J^c$, then $|\beta_j(y_i)|\ls \delta 2^{\widetilde r}/n$ for every $i\in U_2$.
Using the dyadic assumptions for every $i\in U_2$,
$$\sum_{j\in J^c}|\beta_j(y_i)| \ls \sum_{r=0}^{\widetilde r}m_\delta(y_i,r)\frac{\delta}{n}2^r \ls \sum_{r=0}^{\widetilde r}|L|^\alpha2^{-2r}n\frac{\delta}{n}2^r \ls 2\delta |L|^\alpha.$$
Summing over $i\in U_2$ gives
$$\sum_{j\in J^c}\sum_{i\in U_2}|\beta_j(y_i)| \ls 2|U_2|\delta |L|^\alpha \ls C\delta |L|^{1/8+3c_{\mathrm{stairs}}+\alpha}.$$
But the lower bound \eqref{eq:Lstairs-4} obtained from Lemma~\ref{lem:generalPNbasic} gives
$$\sum_{j\in J^c}\sum_{i\in U_2}|\beta_j(y_i)| \gr c_{\mathrm{pin}}^2\delta |L|^{3/16}.$$
Since $\alpha\ls c_{\mathrm{stairs}}$, the upper bound is at most $C\delta |L|^{1/8+4c_{\mathrm{stairs}}}$.
Put
$$\Delta:=\frac{3}{16}-\frac18-4c_{\mathrm{stairs}}=\frac1{16}-4c_{\mathrm{stairs}}>0.$$
Since $|L|\gr C_{\mathrm{stairs}}(\gamma)$, the choice of $C_{\mathrm{stairs}}(\gamma)$ and the choice of the absolute constant $C_0$ in the statement give $c_{\mathrm{pin}}^2|L|^\Delta>C.$
Thus the lower bound is strictly larger than the upper bound, a contradiction.
\end{proof}

The following deterministic preprocessing lemma is the version of \cite[Lemma 3.16]{HT26} needed in the present aspect-ratio regime.
The proof is the same combinatorial stopping-time and pigeonhole argument as in Huang--Tikhomirov and we omit it.
Since here $N/n=\gamma$ is not assumed to be bounded above by an absolute constant we keep track of the additional dyadic pigeonhole loss through the factor $c_{\mathrm{clean}}(\gamma)=\frac{c_{\mathrm{clean}}}{\ln^2(e\gamma)}.$
No probabilistic input is used in this lemma.

\begin{lemma}\label{lem:cleaning}{\rm \cite[Lemma 3.16]{HT26}.}
Assume $\gamma=N/n\gr \gamma_0$ and put $L_\gamma:=\ln(e\gamma)$.
There is an absolute constant $c_{\mathrm{clean}}\in(0,1)$ with the following property.
Set $c_{\mathrm{clean}}(\gamma):={c_{\mathrm{clean}}}/{L_\gamma^2}.$
Work on a realization in $\Omega$.
Let $|L|\gr 2$, let $\alpha\in(0,1]$, $\delta>0$, $\varepsilon\in(0,1/2]$ and let $y_i\in S^{n-1}$, $i\in L$, satisfy $\|y_i\|_{P_{N,n}^{\mu}}\in[\delta,2\delta]$.
Assume that for every $i\in L$,
$$\max_{0\ls r\ls \lfloor\log_2\sqrt{|L|}\rfloor} \frac{m_\delta(y_i,r)}{2^{-2r}n} > |L|^\alpha. $$
Then there is a subset $\widetilde L\subset L$ with $|\widetilde L|\gr c_{\mathrm{clean}}(\gamma)(\ln |L|)^{-2}|L|$ such that for some integer $0\ls r\ls \lfloor 3\log_2|L|+4\rfloor$ and some real number $p\gr |L|^{\alpha-\varepsilon}2^{-2r}2^{\max(0,r-\log_2\sqrt{|L|})}n,$ every $i\in\widetilde L$ satisfies $m_\delta(y_i,r)\in[p,2p]$,
$$m_\delta(y_i,h)<2^{(-2+\varepsilon)(h-r)}m_\delta(y_i,r) \qquad(0\ls h<r),$$
and
$$m_\delta(y_i,h)<2^{-(h-r)/2}m_\delta(y_i,r) \qquad(h>r).$$
\end{lemma}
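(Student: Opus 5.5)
The plan is a deterministic stopping-time selection for each $i$, followed by pigeonholing over the $O(\ln^2|L|)$ possible pairs (level $r$, dyadic size of $p$). Fix $i\in L$ and write $m(h):=m_\delta(y_i,h)$. I will use two a priori facts. First, $\|\beta(y_i)\|_1=\|y_i\|_{P_{N,n}^{\mu}}\ls 2\delta$ gives $m(h)\,\delta 2^{h-1}/n\ls 2\delta$, that is, $m(h)\ls 4n2^{-h}$ for $h\gr1$; trivially also $m(h)\ls N=\gamma n$. Second, the hypothesis gives a level $r_0\ls\lfloor\log_2\sqrt{|L|}\rfloor$ with $m(r_0)>|L|^\alpha 2^{-2r_0}n$.

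For the selection I will use the potential $\psi(h):=m(h)2^{(2-\varepsilon)h}$, maximized over $0\ls h\ls R:=\lfloor 3\log_2|L|+4\rfloor$, and take $r$ to be the \emph{smallest} maximizer.
\begin{itemize}
\item For $h<r$, the choice of $r$ gives $\psi(h)<\psi(r)$, which is exactly $m(h)<2^{(-2+\varepsilon)(h-r)}m(r)$.
\item For $r<h\ls R$, we have $\psi(h)\ls\psi(r)$, so $m(h)\ls 2^{-(2-\varepsilon)(h-r)}m(r)<2^{-(h-r)/2}m(r)$, since $m(r)>0$ and $\varepsilon\ls1/2$.
\item For the lower bound on $m(r)$: $\psi(r)\gr\psi(r_0)>|L|^\alpha 2^{-\varepsilon r_0}n\gr |L|^{\alpha-\varepsilon/2}n$. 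Hence $m(r)\gr |L|^{\alpha-\varepsilon/2}2^{-(2-\varepsilon)r}n$.
\end{itemize}
For the required lower bound on $p$ it then suffices that $|L|^{-\varepsilon/2}2^{\varepsilon r}\gr 2^{\max(0,r-\log_2\sqrt{|L|})}$ up to a factor $2$. This holds trivially for $r\ls\log_2\sqrt{|L|}$. For larger $r$ it does not follow from $\psi$ alone, so I expect to modify the potential to $\widetilde\psi(h):=m(h)2^{(2-\varepsilon)h}2^{-\max(0,h-\log_2\sqrt{|L|})}$. The two monotonicity checks above then hold up to exponent changes of size $1$. This affects the down direction only for $h\gr\log_2\sqrt{|L|}$, and I would absorb it by working with $\varepsilon/2$ in place of $\varepsilon$ inside the potential. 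The upper condition degrades from exponent $2-\varepsilon$ to $1-\varepsilon\gr1/2$, which is still enough.

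The step I expect to be the main obstacle is the range of $h>R$, which the maximization does not see. There I will use $m(h)\ls 4n2^{-h}$ together with the lower bound on $m(r)$, which gives $m(r)\gr n2^{-2r}$ up to $|L|$-powers. The required inequality $4n2^{-h}<2^{-(h-r)/2}m(r)$ then reduces to $2^{h/2}\gr 4\cdot2^{3r/2}\cdot(\text{power of }|L|)$. This holds once $h>3r+O(1)$, so it is automatic when $r\ls\log_2\sqrt{|L|}$ and $h>R$. For larger $r$ I will first try to show that the modified potential forces $r$ to be at most about $\log_2\sqrt{|L|}+O(\log_2|L|)$. If that fails, I would enlarge $R$ by a constant factor, which only changes the constant $3$ in the statement.

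The pigeonhole step follows. For each $i$ the selected $r$ lies in $\{0,\dots,R\}$, giving $O(\ln|L|)$ choices. The value $m(r)$ lies between the lower bound $|L|^{\alpha-\varepsilon}2^{-2r}2^{\max(0,r-\log_2\sqrt{|L|})}n$ and $\min(4n2^{-r},\gamma n)$. The ratio of these bounds is at most $C2^{r}|L|\ls C|L|^{4}$, so $m(r)$ falls into one of $O(\ln|L|)$ dyadic intervals $[p,2p]$. Allowing an extra $O(\ln(e\gamma))$ dyadic classes when $r=0$, where only $m(0)\ls\gamma n$ is available, the total number of classes is at most $C L_\gamma^2\ln^2|L|$. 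Some class therefore contains at least $c_{\mathrm{clean}}L_\gamma^{-2}(\ln|L|)^{-2}|L|$ indices, and these form $\widetilde L$. Every $i\in\widetilde L$ shares the same $r$ and the same $p$, and satisfies the two staircase inequalities by the selection step, which is the claim.
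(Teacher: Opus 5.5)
\textbf{There is a genuine gap, at exactly the step you flag as the main obstacle.} The overall scheme (a potential to select a level, then pigeonholing over $(r,p)$) is the right kind of argument. But you never prove the upper staircase for $h>R:=\lfloor 3\log_2|L|+4\rfloor$, and as written your selection rule actually fails there.

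Because $\widetilde\psi$ is maximized only over $[0,R]$, nothing prevents the smallest maximizer from being $r=R$. For $h>a:=\log_2\sqrt{|L|}$, the bound $m(h)\ls 4n2^{-h}$ only gives $\widetilde\psi(h)\ls 4n\sqrt{|L|}\,2^{-\varepsilon h}$. At $h=R$ this is of order $n|L|^{1/2-3\varepsilon}$, which can exceed both $\widetilde\psi(0)\ls\gamma n$ and $\widetilde\psi(r_0)\approx 2|L|^{\alpha}n$.

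Here is a concrete profile:
\begin{itemize}
\item Take $r_0=\lfloor a\rfloor$ with $m(r_0)\approx 2|L|^{\alpha}2^{-2r_0}n$, and $m(h)=0$ for all other $1\ls h<R$.
\item Let $m(R)$ be just large enough that $\widetilde\psi(R)$ is the maximum on $[0,R]$. This costs $\ell_1$-mass only of order $\max(\gamma,|L|^{\alpha})|L|^{3\varepsilon-1/2}\delta$.
\item Put $m(R+1)\approx n2^{-R-2}$, which uses at most $\delta/2$ of the budget. Fill the remaining mass at level $0$.
\end{itemize}
This profile is consistent with every fact your argument uses, and your rule returns $r=R$. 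However, $m(R+1)/m(R)$ is of order $|L|^{1/2-3\varepsilon}/\max(\gamma,|L|^{\alpha})\gg 1$, so $m(R+1)<2^{-1/2}m(R)$ fails. This happens precisely in the regime where the lemma is used later, namely $\varepsilon=\alpha_{\mathrm{sp}}/2$ and $16\gamma\ls |L|^{\alpha_{\mathrm{sp}}/2}$.

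\textbf{Your fallbacks do not repair this.}
\begin{itemize}
\item A bound $r\ls a+O(\log_2|L|)$ is already implied by $r\ls R$. Your comparison with $m(h)\ls 4n2^{-h}$ only works beyond a threshold in $h$ that exceeds $R$ once $r$ is comparable to $R$.
\item Enlarging $R$ changes the statement and still does not help. With tail slope $1-\varepsilon$ your potential decays only like $2^{-\varepsilon h}$, so its maximizer is confined only to $h=O(\varepsilon^{-1}\log_2|L|)$. The same construction then works at any new endpoint.
\end{itemize}

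\textbf{The missing idea} is to choose the potential so that its maximizer over \emph{all} $h\gr 0$ is automatically localized, so no truncation is needed. The upper condition only needs the exponent to grow with slope strictly above $1/2$, so the tail slope should be just above $1/2$, not $1-\varepsilon$.

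For example, set $\Phi(h):=m(h)2^{f(h)}$ with $f(h):=(2-\varepsilon)\min(h,a)+\tfrac58\max(0,h-a)$. Maximize over all $h\gr0$ (only finitely many $m(h)$ are nonzero), and let $r$ be the smallest maximizer. Then:
\begin{itemize}
\item All slopes of $f$ are $\ls 2-\varepsilon$, so the down inequality holds strictly.
\item All slopes are $\gr 5/8$, so $m(h)\ls 2^{-5(h-r)/8}m(r)<2^{-(h-r)/2}m(r)$ for every $h>r$.
\item From $\Phi(r)\gr\Phi(r_0)>|L|^{\alpha-\varepsilon/2}n$ you get $m(r)\gr |L|^{\alpha-\varepsilon}2^{-2r}2^{\max(0,r-a)}n$, in both ranges $r\ls a$ and $r>a$.
\item Combining this with $m(r)\ls 4n2^{-r}$ gives $2^{3(r-a)/8}<4|L|^{1/2-\alpha}$. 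Hence $r<\tfrac{16}{3}+\tfrac{11}{6}\log_2|L|$, which implies $r\ls\lfloor 3\log_2|L|+4\rfloor$ for every $|L|\gr 2$.
\end{itemize}
Your pigeonhole step then goes through unchanged. Anchor the dyadic classes at the lower bound for $p$ so that you do not lose a factor $2$.

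Two minor points:
\begin{itemize}
\item The penalty in $\widetilde\psi$ only strengthens the down inequality, so no $\varepsilon/2$ adjustment is needed there.
\item The condition you display should read $|L|^{\varepsilon/2}2^{\varepsilon r}\gr 2^{\max(0,r-\log_2\sqrt{|L|})}$.
\end{itemize}
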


Fix once and for all an absolute constant $\alpha_{\mathrm{sp}}\in(0,1)$ such that 
\begin{equation}\label{eq:asp}\alpha_{\mathrm{sp}}\ls c_{\mathrm{stairs}}\qquad\hbox{and}\qquad \alpha_{\mathrm{sp}}<\frac1{128}.\end{equation}

The following lemma is the version of \cite[Lemma 3.17]{HT26} needed in the present aspect-ratio regime.
The proof follows the same deterministic argument as in Huang--Tikhomirov, one first applies the preprocessing lemma, then chooses a lower dyadic level $r'$, partitions the relevant coordinates according to multiplicity and finally applies the pseudo-incompressibility estimate.
In the present setting the inputs are Lemmas~\ref{lem:cleaning} and \ref{lem:generalPNbasic}.  
Thus the constants must absorb the explicit dependence of $c_{\mathrm{clean}}(\gamma)$ and $c_{\mathrm{pin}}(\gamma)$ on $\gamma=N/n$ which is reflected in the choice  $C_{\mathrm{spike}}(\gamma) = \left\lceil C_0\gamma^{C_0/\alpha_{\mathrm{sp}}^2}\right\rceil .$
No additional probabilistic input is used in this lemma.

\begin{lemma}\label{lem:spikymdelta}
After increasing the fixed absolute constant $\gamma_0>1$ if necessary, assume $\gamma=N/n\gr \gamma_0$.
Let $c_{\mathrm{pin}}(\gamma)$ be the constant from Lemma~\ref{lem:generalPNbasic} and let $c_{\mathrm{clean}}(\gamma)$ be the constant from Lemma~\ref{lem:cleaning}.
There is an absolute constant $C_0\gr1$ such that with $C_{\mathrm{spike}}(\gamma):=\left\lceil C_0\gamma^{C_0/\alpha_{\mathrm{sp}}^2}\right\rceil,$ the following holds.
Work on a realization in $\Omega$.
Let $L$ be a finite index set with $|L|\gr C_{\mathrm{spike}}(\gamma)$, let $\delta>0$ and let $y_i\in S^{n-1}$, $i\in L$, satisfy $\|y_i\|_{P_{N,n}^{\mu}}\in[\delta,2\delta]$.
Assume that for every $i\in L$,
$$\max_{0\ls r\ls \lfloor\log_2\sqrt{|L|}\rfloor} \frac{m_\delta(y_i,r)}{2^{-2r}n} > |L|^{\alpha_{\mathrm{sp}}}.$$
Then there is $v=(v_i)_{i\in L}$ with $\|v\|_\infty=1$ such that
$$\left\|\sum_{i\in L}v_i y_i\right\|_{P_{N,n}^{\mu}} \gr \delta |L|^{\alpha_{\mathrm{sp}}/5}.$$
\end{lemma}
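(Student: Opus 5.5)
We will follow the Huang--Tikhomirov scheme for \cite[Lemma 3.17]{HT26}, using Lemma~\ref{lem:cleaning} and then Lemma~\ref{lem:generalPNbasic}, and tracking the powers of $\gamma$ throughout.

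\emph{Preprocessing.} Apply Lemma~\ref{lem:cleaning} with $\alpha=\alpha_{\mathrm{sp}}$ and $\varepsilon=\alpha_{\mathrm{sp}}/10$. This yields $\widetilde L\subset L$ with $|\widetilde L|\gr c_{\mathrm{clean}}(\gamma)(\ln|L|)^{-2}|L|$, a level $r$, and a number $p\gr |L|^{9\alpha_{\mathrm{sp}}/10}2^{-2r}2^{\max(0,r-\log_2\sqrt{|L|})}n$. On $\widetilde L$ the counts $m_\delta(y_i,r)\in[p,2p]$ dominate all other levels, decaying geometrically both below and above $r$.

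\emph{Choice of a sub-collection.} Consider $\widetilde\beta_j:=\sum_{i\in U}|\beta_j(y_i)|$ restricted to the coordinates at level $r$. Using the lower-level decay, pick a subset $U\subset\widetilde L$ and a lower dyadic level $r'\ls r$, essentially where $2^{r'}|U|$ balances against the overlap structure. Partition the level-$r$ coordinates of $(y_i)_{i\in U}$ by multiplicity, i.e.\ by the number of $i\in U$ for which $j$ is at level $r$. By pigeonhole, at the cost of a $\log|L|$ factor, there is a multiplicity class $J$ that carries a large share of $\sum_{i\in U}\sum_{j\text{ at level }r}|\beta_j(y_i)|\approx |U|p\delta2^r/n$. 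Then define
\[
S:=\sum_{h\in J}\Big(\sum_{i\in U}\beta_h(y_i)^2\Big)^{1/2}.
\]

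\emph{Contradiction via pseudo-incompressibility.} Suppose that for every $v$ with $\|v\|_\infty=1$ one has $\|\sum v_iy_i\|_P<\delta|L|^{\alpha_{\mathrm{sp}}/5}$. Then $\Exp_\sigma\|\beta^\sigma\|_1<\delta|L|^{\alpha_{\mathrm{sp}}/5}$ for sign sums over $U$. If $|J|\ls c_{\mathrm{pin}}(\gamma)n$, which we arrange through the choice of $U$ and the bound $p\ls 2^{-2r+\dots}$-type counting, and if $c_{\mathrm{pin}}(\gamma)S\gr\Exp_\sigma\|\beta^\sigma\|_1$, then Lemma~\ref{lem:generalPNbasic} produces $c_{\mathrm{pin}}(\gamma)n$ indices outside $J$ whose row norms are at least $c_{\mathrm{pin}}(\gamma)S/\sqrt{|J|n}$. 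Summing these gives a lower bound on $\sum_{j\notin J}\sum_{i\in U}|\beta_j(y_i)|$. Compare this with the upper bound obtained from the decay conditions of Lemma~\ref{lem:cleaning}, or from $\|\beta(y_i)\|_1\ls2\delta$ together with the levels above and below $r$. The mismatch is a positive power $|L|^{c\alpha_{\mathrm{sp}}^2}$ against a polynomial in $c_{\mathrm{pin}}(\gamma)^{-1}$ and $c_{\mathrm{clean}}(\gamma)^{-1}$. The lower bound $|L|\gr C_0\gamma^{C_0/\alpha_{\mathrm{sp}}^2}$ is exactly what makes this mismatch a contradiction. If instead $c_{\mathrm{pin}}(\gamma)S<\Exp_\sigma\|\beta^\sigma\|_1$, we use $S$ directly: Lemma~\ref{lem:rad_l1_antic} shows that a random sign vector $v$ has $\|\sum v_iy_i\|_P\gr \|\beta^\sigma\|_1$ comparable to $S$ (by minimality of the $\ell_1$ representation), and this gives the claimed $\delta|L|^{\alpha_{\mathrm{sp}}/5}$.

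\emph{Main obstacle.} The hardest step is the choice of $r'$ and $U$ so that three requirements hold at once: $|J|\ls c_{\mathrm{pin}}(\gamma)n$, $S\gg\delta|L|^{\alpha_{\mathrm{sp}}/5}/c_{\mathrm{pin}}(\gamma)$, and the final exponents give a net gain of order $\alpha_{\mathrm{sp}}^2$ rather than zero. This requires careful bookkeeping of the $2^{(-2+\varepsilon)(h-r)}$ decay and of the case $r>\log_2\sqrt{|L|}$. Our first attempt will be to take $|U|\approx 2^{2r}p/n\cdot|L|^{-\alpha_{\mathrm{sp}}/2}$, truncated to $[1,|\widetilde L|]$.
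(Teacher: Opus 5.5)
There is a genuine gap. You name the right ingredients (Lemma~\ref{lem:cleaning}, a multiplicity partition, Lemma~\ref{lem:generalPNbasic}), but the mechanism that actually closes the argument is missing, and the concrete choices you do make would fail.

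\textbf{The comparison cannot be done in $\ell_1$.} For $h<r$, the decay condition of Lemma~\ref{lem:cleaning} permits $\ell_1$-mass of order $p\,\delta 2^{r}2^{(1-\varepsilon)(r-h)}/n$ at level $h$. This \emph{grows} as $h$ decreases, so it gives nothing better than $\|\beta(y_i)\|_1\ls 2\delta$. Against the bound $2\delta|U|$, the lower bound produced by Lemma~\ref{lem:generalPNbasic} is about $c_{\mathrm{pin}}^2\delta\,(2^{2r}p|U|/(n\ln|L|))^{1/2}$. It only wins if $|U|\lesssim c_{\mathrm{pin}}^4 2^{2r}p/(n\ln|L|)$.

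\textbf{The missing idea.} You need an auxiliary level $r'<r$ combined with a comparison of \emph{squared} $\ell_2$-mass.
The paper takes $r'$ to be the largest integer with $\varepsilon^{-1}2^{\varepsilon(r'-r)}<c_{\mathrm{pin}}^3/(C_1(\log_2|L|+2))$. It lets $J'$ be the set of $j$ with $|\beta_j(y_i)|>\delta 2^{r'-1}/n$ for some $i\in U$.
Because the decay exponent is $-2+\varepsilon$, the squared mass below level $r'$ is geometrically summable: $\sum_{h<r'}m_\delta(y_i,h)\delta^2 2^{2h}/n^2\lesssim \varepsilon^{-1}p\,\delta^2 2^{2r}2^{\varepsilon(r'-r)}/n^2$.
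The at least $c_{\mathrm{pin}}n/2$ indices outside $J'$ supplied by Lemma~\ref{lem:generalPNbasic} carry squared mass $\gtrsim c_{\mathrm{pin}}^3\delta^2 2^{2r}p|U|/(n^2\ln|L|)$. Comparing the two contradicts exactly the defining inequality of $r'$.
This requires $|J'|\ls c_{\mathrm{pin}}n/2$. Since $|J'|\lesssim p|U|2^{(2-\varepsilon)(r-r')}$, this dictates $|U|=\min(\lfloor M_0\rfloor,|\widetilde L|)$ with $M_0=c_{\mathrm{pin}}n/(8p\,2^{(-2+\varepsilon)(r'-r)})$.

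\textbf{Your choice of $U$ fails.} $|U|$ must be as large as the $J'$ constraint allows, i.e.\ proportional to $n/p$. This size, together with $2^{2r}p/n\gr|L|^{\alpha_{\mathrm{sp}}-\varepsilon}$, is what gives $c_{\mathrm{pin}}S\gr\delta|L|^{0.49(\alpha_{\mathrm{sp}}-\varepsilon)}$, the hypothesis of Lemma~\ref{lem:generalPNbasic}.
Your choice $|U|\approx 2^{2r}p\,n^{-1}|L|^{-\alpha_{\mathrm{sp}}/2}$ is proportional to $p$ instead. It breaks down, as does any $|U|$ obeying the $\ell_1$ constraint above, in a regime that nothing in the conclusions of Lemma~\ref{lem:cleaning} excludes: $2^r\approx\sqrt{|L|}$, $2^{2r}p/n\approx|L|^{\alpha_{\mathrm{sp}}}$, and no coefficients above level $r$.
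There $|U|\lesssim|L|^{\alpha_{\mathrm{sp}}}$, every entry is at most $\delta2^r/n$, and $|J|\ls 2p|U|$. Hence $S\ls|J|\sqrt{|U|}\,\delta2^r/n\lesssim\delta|L|^{5\alpha_{\mathrm{sp}}/2-1/2}$. This is far below $\delta|L|^{\alpha_{\mathrm{sp}}/5}/c_{\mathrm{pin}}$, so Lemma~\ref{lem:generalPNbasic} cannot be invoked.

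\textbf{Your fallback branch is wrong as argued.} Minimality of the representation gives $\|\beta^\sigma\|_1\ls\|\sum_i\sigma_i\beta(y_i)\|_1$, which is the wrong direction. So Lemma~\ref{lem:rad_l1_antic} yields no lower bound on $\|\sum_i\sigma_iy_i\|_{P_{N,n}^{\mu}}$; this nonlinearity is precisely why Lemma~\ref{lem:generalPNbasic} is needed. The branch is superfluous once $c_{\mathrm{pin}}S\gr\delta|L|^{\alpha_{\mathrm{sp}}/5}$ is known, since then $c_{\mathrm{pin}}S<\Exp_\sigma\|\beta^\sigma\|_1$ directly yields a good sign vector. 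But that lower bound on $S$ is exactly what your construction does not deliver.
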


We include the proof in order to track the dependence on $\gamma$.

\begin{proof}
Let $L_\gamma:=\ln(e\gamma)$ and $P:=P_{N,n}^{\mu}$, and write $c_{\mathrm{pin}}:=c_{\mathrm{pin}}(\gamma), c_{\mathrm{clean}}:=c_{\mathrm{clean}}(\gamma), \alpha:=\alpha_{\mathrm{sp}}, \varepsilon:=\alpha/2.$
Let $C_1\gr1$ be a sufficiently large absolute constant. 
Since $c_{\mathrm{pin}}(\gamma)=c_0\gamma^{-2}$ and $c_{\mathrm{clean}}(\gamma)=c_{\mathrm{clean}}L_\gamma^{-2}$, after increasing $\gamma_0$ if necessary
we may choose the absolute constant $C_0$ in the definition of $C_{\mathrm{spike}}(\gamma)$ large enough so that, whenever $|L|\gr C_{\mathrm{spike}}(\gamma)$, 
the following estimates hold: 
\begin{align}
|L|^{\alpha/2}&\gr 16\gamma, \label{spikyest1} \\
\gamma^{-\alpha/4}|L|^{\alpha^2/8}&\gr 8C_1\alpha^{-1}c_{\mathrm{pin}}^{-3}(\log_2|L|+2),  \label{spikyest2} \\
|L|^{\alpha/8}&\gr \sqrt\gamma\,c_{\mathrm{pin}}^{-1}\left(\alpha^{-1}c_{\mathrm{pin}}^{-3}(\log_2|L|+2)\right)^{4/\alpha},  \label{spikyest3} \\
c_{\mathrm{pin}}^{C/\alpha}\alpha^{C/\alpha}|L|^{\alpha/200}(\ln |L|)^{-C/\alpha}&\gr1,  \label{spikyest4} \\
c_{\mathrm{pin}}\sqrt{c_{\mathrm{clean}}}\,|L|^{\alpha/5}(\ln |L|)^{-2}&\gr1,  \label{spikyest5}
\end{align}
where $C\gr1$ is an absolute constant.

By Lemma~\ref{lem:cleaning} there are a set $\widetilde L\subset L$ with $|\widetilde L|\gr c_{\mathrm{clean}}(\ln |L|)^{-2}|L|,$ an integer $0\ls r\ls \lfloor 3\log_2|L|+4\rfloor$ and a number $p\gr |L|^{\alpha-\varepsilon}2^{-2r}2^{\max(0,r-\log_2\sqrt |L|)}n \gr |L|^{\alpha-\varepsilon}2^{-2r}n$ such that for every $i\in\widetilde L$ we have $m_\delta(y_i,r)\in[p,2p],$
and also
$$m_\delta(y_i,h)<2^{(-2+\varepsilon)(h-r)}m_\delta(y_i,r) \qquad (0\ls h<r),$$ 
and
$$m_\delta(y_i,h)<2^{-(h-r)/2}m_\delta(y_i,r) \qquad (h>r).$$

Since $|L|^{\alpha-\varepsilon}=|L|^{\alpha/2}\gr16\gamma$, from (\ref{spikyest1}), we have $|L|^{\alpha-\varepsilon}n\gr N$.
Since $p\ls N$, the lower bound on $p$ gives $2^r\gr \left(\frac{|L|^{\alpha-\varepsilon}n}{N}\right)^{1/2}=\gamma^{-1/2}|L|^{\alpha/4}\gr4.$
In particular, $r>1$.

Let $r'$ be the largest integer in $[1,r-1]$ satisfying $\frac1\varepsilon 2^{\varepsilon(r'-r)}<\frac{c_{\mathrm{pin}}^3}{C_1(\log_2|L|+2)}.$
This integer is well-defined because
$$\frac1\varepsilon2^{\varepsilon(1-r)} \ls \frac2\alpha\,2^{\alpha/2}\left(\gamma^{-1/2}|L|^{\alpha/4}\right)^{-\alpha/2} \ls \frac{c_{\mathrm{pin}}^3}{C_1(\log_2|L|+2)}$$
by (\ref{spikyest2}).

Set $M_0:=\frac{c_{\mathrm{pin}}n}{8p\,2^{(-2+\varepsilon)(r'-r)}}$ and choose $U\subset\widetilde L$ with $|U|=\min(\lfloor M_0\rfloor,|\widetilde L|)$.

Using $\|y_i\|_P\ls2\delta$ and $m_\delta(y_i,r)\gr p$, we have $p\frac{\delta2^{r-1}}n\ls2\delta,$
and hence $p\ls \frac{4n}{2^r}\ls 4\sqrt\gamma\,n|L|^{-\alpha/4}.$
By the maximality of $r'$, $2^{\varepsilon(r'-r)}\gr c\,\alpha c_{\mathrm{pin}}^3(\log_2|L|+2)^{-1}.$
Therefore 
$$2^{(-2+\varepsilon)(r'-r)}\ls \left(C\alpha^{-1}c_{\mathrm{pin}}^{-3}(\log_2|L|+2)\right)^{4/\alpha}.$$
Together with the preceding bound on $p$ and (\ref{spikyest3}) this gives $M_0\gr2$. Hence $|U|\gr1$.

We argue by contradiction.
Assume that for every choice of signs $\sigma_i$, $i\in U$, one has
$$\left\|\sum_{i\in U}\sigma_i y_i\right\|_P\ls \delta |L|^{0.49(\alpha-\varepsilon)}.$$
For such signs write $\beta^\sigma:=\beta(\sum_{i\in U}\sigma_i y_i)$.

Let $J'$ be the set of all $j\in[N]$ such that $|\beta_j(y_i)|>\delta 2^{r'-1}/n$ for at least one $i\in U$.
Let $J$ be the set of all $j\in[N]$ such that $|\beta_j(y_i)|>\delta 2^{r-1}/n$ for at least one $i\in U$.
Then $J\subset J'$. By the bounds obtained from Lemma~\ref{lem:cleaning},
\begin{align*}
|J'|
&\ls \sum_{i\in U}\sum_{h\gr r'}m_\delta(y_i,h) \ls 2p|U|\sum_{h=r'}^{r-1}2^{(-2+\varepsilon)(h-r)}+2p|U|+2p|U|\sum_{h>r}2^{-(h-r)/2}\\
&\ls 4p|U|2^{(-2+\varepsilon)(r'-r)} \ls \frac{c_{\mathrm{pin}}}{2}n.
\end{align*}

Partition $J$ according to multiplicities. For $1\ls w\ls \lceil\log_2|U|\rceil+1$ put
$$J_w:=\left\{j\in J:\ |\{i\in U:\ |\beta_j(y_i)|>\delta2^{r-1}/n\}|\in[2^{w-1},2^w)\right\}.$$
Since $m_\delta(y_i,r)\in[p,2p]$ and the upper tail above level $r$ is geometrically decreasing,
$$p|U|\ls \sum_{i\in U}m_\delta(y_i,r)\ls \sum_{i\in U}\sum_{h\gr r}m_\delta(y_i,h)\ls 8p|U|.$$
Choose $w_0$ so that $|J_{w_0}|2^{w_0}$ is maximal.
Then $|J_{w_0}|2^{w_0}\gr \frac{p|U|}{\log_2|U|+1}.$
For every $h\in J_{w_0}$ at least $2^{w_0-1}$ indices $i\in U$ satisfy $|\beta_h(y_i)|>\delta2^{r-1}/n$. Hence
$$ c_{\mathrm{pin}}\sum_{h\in J_{w_0}}\sqrt{\sum_{i\in U}\beta_h(y_i)^2} \gr c_{\mathrm{pin}}\frac{\delta2^r}{n}2^{w_0/2}|J_{w_0}| \gr \frac{c_{\mathrm{pin}}\delta2^r}{n}\frac{p\sqrt{|U|}}{\log_2|L|+2}.$$

We now check that Lemma~\ref{lem:generalPNbasic} applies to $J_{w_0}$.
First, $J_{w_0}\subset J'$, so $|J_{w_0}|\ls c_{\mathrm{pin}}n$.
It remains to verify the expectation hypothesis.

If $\lfloor M_0\rfloor\ls |\widetilde L|$, then $|U|=\lfloor M_0\rfloor$ and, since $M_0\gr2$,
$$2^r p\sqrt{|U|}\gr c\sqrt{c_{\mathrm{pin}}}\,2^r\sqrt{np}\,2^{(1-\varepsilon/2)(r'-r)}.$$
Using $p\gr |L|^{\alpha-\varepsilon}2^{-2r}n$ gives
$$2^r p\sqrt{|U|}\gr c\sqrt{c_{\mathrm{pin}}}\,|L|^{(\alpha-\varepsilon)/2}n\,2^{(1-\varepsilon/2)(r'-r)}.$$
By the maximality of $r'$,
$$2^{\varepsilon(r'-r)}\gr c\,\alpha c_{\mathrm{pin}}^3(\log_2|L|+2)^{-1}.$$
Therefore
$$c_{\mathrm{pin}}\sum_{h\in J_{w_0}}\sqrt{\sum_{i\in U}\beta_h(y_i)^2} \gr \delta\,c_{\mathrm{pin}}^{C/\alpha}\alpha^{C/\alpha}|L|^{\alpha/4}(\ln |L|)^{-C/\alpha}.$$
Since $0.49(\alpha-\varepsilon)=0.245\alpha$ and $\alpha/4-0.245\alpha=\alpha/200$, (\ref{spikyest4}) gives
$$c_{\mathrm{pin}}\sum_{h\in J_{w_0}}\sqrt{\sum_{i\in U}\beta_h(y_i)^2} \gr \delta |L|^{0.49(\alpha-\varepsilon)}.$$

If instead $\lfloor M_0\rfloor>|\widetilde L|$, then $|U|=|\widetilde L|$ and the lower bound on $p$ gives
$$2^r p\sqrt{|U|}\gr c\sqrt{c_{\mathrm{clean}}}\,\frac{|L|^{\alpha-\varepsilon}n}{\ln |L|}.$$
Consequently,
$$c_{\mathrm{pin}}\sum_{h\in J_{w_0}}\sqrt{\sum_{i\in U}\beta_h(y_i)^2} \gr c\,c_{\mathrm{pin}}\sqrt{c_{\mathrm{clean}}}\,\delta\frac{|L|^{\alpha-\varepsilon}}{(\ln |L|)^2}. $$
Since $\alpha-\varepsilon=\alpha/2$, (\ref{spikyest5}) gives
$$c_{\mathrm{pin}}\sum_{h\in J_{w_0}}\sqrt{\sum_{i\in U}\beta_h(y_i)^2} \gr \delta |L|^{0.49(\alpha-\varepsilon)}.$$

In both cases the contradiction assumption and the definition of $\beta^\sigma$ give
$$c_{\mathrm{pin}}\sum_{h\in J_{w_0}}\sqrt{\sum_{i\in U}\beta_h(y_i)^2} \gr \Exp_\sigma\|\beta^\sigma\|_1. $$
Thus Lemma~\ref{lem:generalPNbasic} applies to the family $(y_i)_{i\in U}$ and the set $J_{w_0}$.

It gives at least $c_{\mathrm{pin}}n$ indices $j\in J_{w_0}^c$ such that
$$\sqrt{\sum_{i\in U}\beta_j(y_i)^2} \gr \frac{c_{\mathrm{pin}}}{\sqrt{|J_{w_0}|n}}\sum_{h\in J_{w_0}}\sqrt{\sum_{i\in U}\beta_h(y_i)^2}.$$
Using the lower bound on the last sum coming from the definition of $J_{w_0}$, each of these indices satisfies
$$\sum_{i\in U}\beta_j(y_i)^2\gr \frac{c_{\mathrm{pin}}^2\delta^2 2^{2r}2^{w_0}|J_{w_0}|}{n^3}.$$
Since $|J'|\ls c_{\mathrm{pin}}n/2$, at least $c_{\mathrm{pin}}n/2$ of these indices belong to $[N]\setminus J'$.
Therefore
$$\sum_{j\in[N]\setminus J'}\sum_{i\in U}\beta_j(y_i)^2 \gr \frac{c_{\mathrm{pin}}^3\delta^2 2^{2r}2^{w_0}|J_{w_0}|}{n^2}.$$

On the other hand, by the definition of $J'$ and the lower-level estimates from Lemma~\ref{lem:cleaning}, for every $i\in U$,
$$\sum_{j\in[N]\setminus J'}\beta_j(y_i)^2 \ls \sum_{h=0}^{r'-1}m_\delta(y_i,h)\frac{\delta^2}{n^2}2^{2h} \ls 2p\sum_{h=0}^{r'-1}2^{(-2+\varepsilon)(h-r)}\frac{\delta^2}{n^2}2^{2h} \ls \frac{C}{\varepsilon}p\delta^2 2^{(-2+\varepsilon)(r'-r)}2^{2r'}n^{-2}.$$
Summing over $i\in U$ gives
$$\sum_{j\in[N]\setminus J'}\sum_{i\in U}\beta_j(y_i)^2 \ls \frac{C}{\varepsilon}p\delta^2 2^{(-2+\varepsilon)(r'-r)}2^{2r'}|U|n^{-2}.$$
Combining the lower and upper bounds and cancelling the common factors gives 
$$\frac1\varepsilon p\,2^{\varepsilon(r'-r)}|U| \gr c_{\mathrm{pin}}^3\,2^{w_0}|J_{w_0}|.$$
Using $2^{w_0}|J_{w_0}|\gr p|U|/(\log_2|U|+1)$ and $|U|\ls |L|$, we get
$$\frac1\varepsilon 2^{\varepsilon(r'-r)} \gr \frac{c_{\mathrm{pin}}^3}{\log_2|L|+2},$$
contradicting the definition of $r'$ if $C_1$ was chosen sufficiently large.

Hence the contradiction assumption is false.
Therefore there exists a choice of signs $\sigma_i$, $i\in U$ such that
$$\left\|\sum_{i\in U}\sigma_i y_i\right\|_P\gr \delta |L|^{0.49(\alpha-\varepsilon)}.$$
Extend this sign vector to $L$ by setting $v_i=\sigma_i$ for $i\in U$ and $v_i=0$ for $i\in L\setminus U$.
Then $\|v\|_\infty=1$.
Since $\varepsilon=\alpha/2$, we have $0.49(\alpha-\varepsilon)=0.245\alpha>\alpha/5$ and hence
$$\left\|\sum_{i\in L}v_i y_i\right\|_P\gr \delta |L|^{\alpha/5}=\delta |L|^{\alpha_{\mathrm{sp}}/5}.$$
\end{proof}

\subsection{Exclusion of \texorpdfstring{$\ell_\infty^k$}{ell-infinity k}}

The following deterministic Banach-space reduction is taken from Huang--Tikhomirov. 
It is independent of the distribution of the columns $X_j$ and therefore applies in the log-concave setting without any change.

\begin{lemma}\label{lem:ellinftyreg}{\rm \cite[Lemma 2.10]{HT26}.}
There are absolute constants $c_0,C_0>0$ with the following property.
Let $\|\cdot\|$ be an arbitrary norm on $\R^n$, let $2\ls k\ls n/2$, let $E$ be a $k$-dimensional subspace of $(\R^n,\|\cdot\|)$ and let $\rho=\BM(E,\ell_\infty^k).$
Then there exist $\widetilde k:=\max\{1,\lfloor c_0k/\ln k\rfloor\}$ unit Euclidean vectors $y_1,\dots,y_{\widetilde k}\in E$ and a number $\delta>0$ such that $\|y_i\|\in[\delta,2\delta]$, $1\ls i\ls \widetilde k$ and $\|\sum_{i=1}^{\widetilde k}v_i y_i\|\ls C_0\delta\rho\|v\|_\infty$ for every $v=(v_i)_{i\ls \widetilde k}$.
\end{lemma}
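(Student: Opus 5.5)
Since $E$ is $k$-dimensional with $\rho=\BM(E,\ell_\infty^k)$, I will fix a linear isomorphism $T:\ell_\infty^k\to E$ and normalize it so that $\|x\|_\infty\ls\|Tx\|\ls\rho\|x\|_\infty$. The first step is to obtain $\rho$-controlled vectors. Put $e_i'=Te_i$. Then $\|e_i'\|\in[1,\rho]$ and
$$\Big\|\sum_i v_ie_i'\Big\|\ls\rho\|v\|_\infty.$$
These vectors are not Euclidean unit vectors, however, and their norms may spread over a range of size $\rho$.

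Next I will bring in the Euclidean structure. By John's theorem applied to $E$ (Theorem~\ref{thm:john}) there is an ellipsoid $\mathcal E$ with $\mathcal E\subset B_E\subset\sqrt k\,\mathcal E$. Renormalizing each $e_i'$ in Euclidean norm could change the norm ratios by as much as a factor $\sqrt k$. This is far too lossy, so the selection has to be done more carefully. Since $E$ is at distance $\rho$ from $\ell_\infty^k$, I will first select a large subset of coordinates on which the Euclidean norms $\|e_i'\|_2$ are comparable, and then normalize.

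The concrete route is as follows. Choose $T$ so that the Euclidean structure restricted to $E$ is well behaved: consider the operator $S=T:\ell_2^k\to(E,\|\cdot\|_2)$. By a Dvoretzky--Rogers / restricted invertibility style selection (Bourgain--Tzafriri, or simply Dvoretzky--Rogers applied to the ellipsoid), there is a subset $\sigma\subset[k]$ with $|\sigma|\gr ck$ on which the vectors $Te_i$, $i\in\sigma$, have comparable Euclidean norms up to a factor $2$. Alternatively, dyadic pigeonholing of $\|Te_i\|_2$ loses a $\ln$ factor, since the possible range is polynomial in $k$. I will use the dyadic pigeonhole: both $\|Te_i\|$ and $\|Te_i\|_2$ range within polynomial factors. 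Here $\|Te_i\|\in[1,\rho]$ with $\rho\ls k$, and the Euclidean norms are confined to a range of $\mathrm{poly}(k)$ once the extremes are trimmed using the John position together with the inequality $\|\sum_i \pm Te_i\|\ls\rho$. The pigeonhole therefore gives about $c\,k/\ln k$ indices with $\|Te_i\|_2\in[s,2s]$.

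Set $y_i=Te_i/\|Te_i\|_2$, so that these are Euclidean unit vectors. The norm ratios $\|y_i\|=\|Te_i\|/\|Te_i\|_2$ lie within a factor $2\rho$ of each other. To reach the window $[\delta,2\delta]$ I will pigeonhole once more, now on $\|y_i\|$. This second step costs a factor $\ln\rho\ls\ln k$, which forces me to choose the first pigeonhole more cleverly. Instead I will pigeonhole jointly on the pair $(\|Te_i\|,\|Te_i\|_2)$. The main obstacle is to get away with a single logarithm. I expect to handle it by noting that in the first coordinate $\|Te_i\|$ only $\ln\rho$ levels occur, and that boundedness of $\|\sum_i\pm Te_i\|$ together with the Khintchine estimate (Theorem~\ref{thm:hilbert_khintchine}) forces most Euclidean norms into $O(1)$ levels. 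For the second claim, since $\mathcal E\subset B_E$, the Euclidean norm in John position dominates the $E$-norm, so
$$\Big(\sum_i\|Te_i\|_2^2\Big)^{1/2}\ls C\,\Exp\Big\|\sum_i\pm Te_i\Big\|_2\ls C\sqrt k\rho,$$
and Markov's inequality then bounds how many $i$ can have large $\|Te_i\|_2$.

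Finally, the estimate $\|\sum_i v_iy_i\|\ls C_0\delta\rho\|v\|_\infty$ follows from
$$\Big\|\sum_i v_iy_i\Big\|=\Big\|\sum_i (v_i/\|Te_i\|_2)Te_i\Big\|\ls\rho\max_i|v_i|/\|Te_i\|_2\ls 2\rho\|v\|_\infty/s,$$
combined with $\delta\gr\|Te_i\|/(2s)\gr 1/(2s)$.
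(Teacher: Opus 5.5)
The paper takes this lemma from \cite{HT26} without proof. Judged on its own, your selection step has a genuine gap. In the statement, ``unit Euclidean'' refers to the ambient norm $\|\cdot\|_2$ of $\R^n$ (the $y_i$ are later used as points of $S^{n-1}$), while $\|\cdot\|$ is an arbitrary norm. So $B_2^n\cap E$ has no relation to the John ellipsoid $\mathcal E$ of $B_E$. Everything you derive from $\mathcal E\subset B_E\subset\sqrt k\,\mathcal E$ and Khintchine concerns $\|\cdot\|_{\mathcal E}$, not $\|\cdot\|_2$. Nothing confines $\|Te_i\|_2$, or the ratios $r_i:=\|Te_i\|/\|Te_i\|_2$, to a range polynomial in $k$.

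Here is a counterexample. Let $\|x\|:=\max_{i\le n}4^{-i}|x_i|$ and $E:=\spn\{e_1,\dots,e_k\}$. Then $Te_i:=4^ie_i$ is an isometry of $\ell_\infty^k$ onto $E$, so $\rho=1$. Every $T'$ realizing $\rho$ is a multiple of $T$ composed with a signed permutation. The normalized vectors $T'e_i/\|T'e_i\|_2=\pm e_{\pi(i)}$ therefore have norms $4^{-\pi(i)}$, and no two of them lie in a common window $[\delta,2\delta]$. Any scheme that only rescales the vectors $Te_i$ and pigeonholes thus yields one vector, not $\lfloor c_0k/\ln k\rfloor$.

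Separately, your ``single logarithm'' step is unsupported. Khintchine plus Markov bounds how many $\|Te_i\|_2$ are large, not how many are small, and does not force most of them into $O(1)$ dyadic levels.

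The missing idea is that the $y_i$ need not be multiples of the $Te_i$, and that $\|y_i\|$ is the only quantity you must control. If $y_i=Tx_i$ with disjointly supported $x_i$, then $\|x_i\|_\infty\le\|Tx_i\|=\|y_i\|$. Hence $\|\sum_iv_iy_i\|\le\rho\|v\|_\infty\max_i\|y_i\|$ holds automatically, and your preliminary pigeonhole on $\|Te_i\|_2$ is unnecessary.

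The unbounded spread of the $r_i$ is then handled by continuity instead of pigeonholing. Relabel so that $r_1\le\dots\le r_k$, and put $p:=\lfloor k/2\rfloor$ and $\delta:=r_p$. For $j\le p$ we have $j\le p<k+1-j$, hence $r_j\le\delta\le r_{k+1-j}$. The norm $\|\cdot\|$ is continuous on the connected Euclidean unit circle of the plane $\spn\{Te_j,Te_{k+1-j}\}$, where it takes the values $r_j$ and $r_{k+1-j}$. So this circle contains some $y_j=Tx_j$ with $x_j$ supported on $\{j,k+1-j\}$, $\|y_j\|_2=1$ and $\|y_j\|=\delta$.

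These supports are disjoint, so $\|\sum_{j\le p}v_jy_j\|\le\rho\|\sum_jv_jx_j\|_\infty\le\rho\delta\|v\|_\infty$. This proves the lemma with $\lfloor k/2\rfloor\ge\widetilde k$ vectors (for, say, $c_0\le1/4$), $C_0=1$, and no logarithmic loss. Your final norm computation is fine once the vectors are chosen this way.
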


The following theorem is the log-concave aspect-ratio version of \cite[Theorem 3.13]{HT26}. 
The proof is the deterministic closing argument of Huang--Tikhomirov. 
The direct input from their work is Lemma~\ref{lem:ellinftyreg}; the remaining inputs are replaced by the log-concave estimates proved above, namely Proposition~\ref{prop:spansofcomp}, Lemma~\ref{lem:stairs} and Lemma~\ref{lem:spikymdelta}. 
Since the present aspect ratio $\gamma=N/n$ is not assumed to be bounded above by an absolute constant, the constants are recorded explicitly through
$$K_{\infty}(\gamma)=\left\lceil C\gamma^C\right\rceil \qquad\hbox{and}\qquad c_{\infty}(\gamma)=\frac12K_{\infty}(\gamma)^{-\alpha_0}.$$
We include the proof in order to check that the powers of $k$ still dominate the log-concave losses.

\begin{theorem}\label{thm:conditional_gluskinlinfty}
After increasing the fixed absolute constant $\gamma_0>1$ if necessary, assume $\gamma=N/n\gr \gamma_0$.
Let $c_{\mathrm{stairs}}\in(0,1)$ be the absolute constant from Lemma~\ref{lem:stairs}.
Set $\alpha_0:={\alpha_{\mathrm{sp}}}/{20}$, where $\alpha_{\mathrm{sp}}$ is the constant defined in $\eqref{eq:asp}$.
Put
$$\Delta_1:=\frac1{208}-10\alpha_0=\frac1{208}-\frac{\alpha_{\mathrm{sp}}}{2}, \qquad \Delta_2:=\frac1{128}-20\alpha_0=\frac1{128}-\alpha_{\mathrm{sp}}.$$
There is an absolute constant $C\gr1$ such that, with $K_{\infty}(\gamma):=\left\lceil C\gamma^C\right\rceil$ and $c_{\infty}(\gamma):=\frac12K_{\infty}(\gamma)^{-\alpha_0},$ the following holds.
Assume $n\gr K_{\infty}(\gamma)$ and work on a realization in $\Omega$.
Then for every $1\ls k\ls n/2$ and every $k$-dimensional subspace $E$ of $(\R^n,\|\cdot\|_{P_{N,n}^{\mu}})$,
$$\BM(E,\ell_\infty^k)\gr c_{\infty}(\gamma)k^{\alpha_0}.$$
\end{theorem}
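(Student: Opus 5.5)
We argue by contradiction, following the closing argument of Huang--Tikhomirov. First dispose of small $k$: if $k^{\alpha_0}\ls 2K_\infty(\gamma)^{\alpha_0}$, then $c_\infty(\gamma)k^{\alpha_0}\ls 1\ls\BM(E,\ell_\infty^k)$ and there is nothing to prove. So assume $k\gr 2^{1/\alpha_0}K_\infty(\gamma)$ and $\rho:=\BM(E,\ell_\infty^k)<c_\infty(\gamma)k^{\alpha_0}\ls k^{\alpha_0}$. Apply Lemma~\ref{lem:ellinftyreg} to obtain $\widetilde k\simeq c_0k/\ln k$ unit vectors $y_1,\dots,y_{\widetilde k}\in E$ and $\delta>0$ with $\|y_i\|_P\in[\delta,2\delta]$ and
$$\Big\|\sum_i v_iy_i\Big\|_P\ls C_0\delta\rho\|v\|_\infty.$$
Put $L=[\widetilde k]$, so that $|L|\gr k^{1-o(1)}$. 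Then $C_0\rho\ls|L|^{2\alpha_0}$ once $K_\infty(\gamma)$ is large, and \eqref{eq:stairs-1} holds with exponent $\alpha:=2\alpha_0\ls c_{\mathrm{stairs}}$.

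We split according to the dyadic profile of the $\beta(y_i)$. If at least half of the $i\in L$ satisfy
$$\max_{0\ls r\ls\lfloor\log_2\sqrt{|L|}\rfloor} m_\delta(y_i,r)/(2^{-2r}n)>|L|^{\alpha_{\mathrm{sp}}},$$
apply Lemma~\ref{lem:spikymdelta} to that half $L'$. This is legitimate provided $|L'|\gr C_{\mathrm{spike}}(\gamma)$, which is ensured by the choice $K_\infty(\gamma)=\lceil C\gamma^C\rceil$ with $C\gr C_0/\alpha_{\mathrm{sp}}^2$. The lemma produces $v$ with $\|v\|_\infty=1$ and $\|\sum v_iy_i\|_P\gr\delta|L'|^{\alpha_{\mathrm{sp}}/5}$. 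On the other hand, the upper bound is $C_0\delta\rho\ls\delta|L|^{2\alpha_0}=\delta|L|^{\alpha_{\mathrm{sp}}/10}$, which is a contradiction for large $|L|$.

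Otherwise, at least half of the indices satisfy $m_\delta(y_i,r)\ls|L|^{\alpha_{\mathrm{sp}}}2^{-2r}n$ for all small $r$. Here we apply Lemma~\ref{lem:stairs} on this half with $\alpha=\alpha_{\mathrm{sp}}\ls c_{\mathrm{stairs}}$; \eqref{eq:stairs-1} holds since $C_0\rho\ls|L|^{\alpha_{\mathrm{sp}}}$. This yields $|U|=\lfloor|L|^{1/16}\rfloor$ unit vectors $\widetilde y_i$ with
$$\|\widetilde y_i\|_P\gr C_{\mathrm{stairs}}(\gamma)^{-1}|L|^{-\alpha_{\mathrm{sp}}/2}(\ln|L|)^{-1}$$
and $\|\sum_{i\in U}\sigma_i\widetilde y_i\|_P\ls C_{\mathrm{stairs}}(\gamma)|L|^{\alpha_{\mathrm{sp}}}$ for all signs. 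Now apply Proposition~\ref{prop:spansofcomp} with $k'=|U|\simeq|L|^{1/16}$. Its hypothesis requires $\|\widetilde y_i\|_P\gr C\sqrt\gamma\,k'^{-1/13}=C\sqrt\gamma|L|^{-1/208}$. This follows from the lower bound above because $\alpha_{\mathrm{sp}}/2<1/208$, which is the role of $\Delta_1>0$: the gap $|L|^{\Delta_1}$ absorbs $C_{\mathrm{stairs}}(\gamma)\sqrt\gamma\ln|L|$ once $|L|\gr\gamma^{C}$. The proposition then gives $\Exp_\sigma\|\sum\sigma_i\widetilde y_i\|_P\gr k'^{1/8}\simeq|L|^{1/128}$. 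This contradicts the sign-uniform upper bound $C_{\mathrm{stairs}}(\gamma)|L|^{\alpha_{\mathrm{sp}}}$, since $\Delta_2=1/128-\alpha_{\mathrm{sp}}>0$ and $|L|^{\Delta_2}\gg C_{\mathrm{stairs}}(\gamma)$. We also need $k'\ls n/2$, which is immediate.

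The main obstacle is the bookkeeping of the $\gamma$-dependence. The thresholds $C_{\mathrm{spike}}(\gamma)$, $C_{\mathrm{stairs}}(\gamma)=\lceil C_0\gamma^{8/c_{\mathrm{stairs}}}\rceil$, $K_\gamma$ from Proposition~\ref{prop:spansofcomp}, and the constant losses $\sqrt\gamma$, $C_{\mathrm{stairs}}(\gamma)$ must all be dominated by powers $|L|^{\Delta_1}$, $|L|^{\Delta_2}$, $|L|^{\alpha_{\mathrm{sp}}/10}$. This forces $k\gr\gamma^{C}$ for a large absolute $C$, and it is exactly what fixes $K_\infty(\gamma)$ and hence $c_\infty(\gamma)=\frac12K_\infty(\gamma)^{-\alpha_0}$. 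A secondary point is keeping the passage $k\to\widetilde k\to|L|/2\to|U|$ polynomial, so that logarithmic losses from Lemma~\ref{lem:ellinftyreg} are absorbed by slightly lowering exponents.
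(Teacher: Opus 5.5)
Your proposal is correct and follows essentially the same route as the paper's proof: the small-$k$ reduction, Lemma~\ref{lem:ellinftyreg}, and the spiky/non-spiky dichotomy, closed by Lemma~\ref{lem:spikymdelta} in one case and by Lemma~\ref{lem:stairs} plus Proposition~\ref{prop:spansofcomp} in the other, with $\Delta_1,\Delta_2>0$ absorbing the polynomial $\gamma$-losses.

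The one point to tidy is that the hypotheses of Lemmas~\ref{lem:spikymdelta} and~\ref{lem:stairs} are stated in terms of the size of the index set they are applied to. With your condition written using $|L|=\widetilde k$, the range $r\le\lfloor\log_2\sqrt{|L|}\rfloor$ (spiky case) and the threshold $|L|^{\alpha_{\mathrm{sp}}}$ (non-spiky case) do not literally transfer to the half. The fix, as in the paper, is to formulate the dichotomy with the parameter $\lceil\widetilde k/2\rceil$ in both the range of $r$ and the threshold, and then pass to a subset of exactly that size.
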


\begin{proof}
Let $P:=P_{N,n}^{\mu}$ and put $K_{\infty}:=K_{\infty}(\gamma)$ and $c_{\infty}:=c_{\infty}(\gamma)$.
Let $c_0,C_0$ be the constants from Lemma~\ref{lem:ellinftyreg}.
Since $C_{\mathrm{comp}}(\gamma)$, $C_{\mathrm{stairs}}(\gamma)$ and $C_{\mathrm{spike}}(\gamma)$ are bounded above by fixed powers of $\gamma$, after increasing the absolute constant $C$ in the definition of $K_\infty(\gamma)$ and increasing $\gamma_0$ if necessary, the assumption $k\gr K_\infty(\gamma)$ implies all lower bounds on $k$ required below.

If $k<K_{\infty}$, then $c_{\infty}k^{\alpha_0}\ls c_{\infty}K_{\infty}^{\alpha_0}=\frac12,$ while $\BM(E,\ell_\infty^k)\gr1$. Thus it remains to consider $K_{\infty}\ls k\ls n/2$.

Assume toward a contradiction that there is a $k$-dimensional subspace $E$ of $(\R^n,\|\cdot\|_P)$ such that $\BM(E,\ell_\infty^k)\ls c_{\infty}k^{\alpha_0}.$
By Lemma~\ref{lem:ellinftyreg}, applied to the norm $\|\cdot\|_P$ on $E$, there are $\widetilde k:=\lfloor c_0k/\ln k\rfloor$ unit Euclidean vectors $y_1,\dots,y_{\widetilde k}\in E$ and a number $\delta>0$ such that $\|y_i\|_P\in[\delta,2\delta]$ for every $i\ls \widetilde k$ and, for every $v=(v_i)_{i\ls \widetilde k}$,
$$\left\|\sum_{i=1}^{\widetilde k}v_i y_i\right\|_P \ls C_0c_{\infty}\delta k^{\alpha_0}\|v\|_\infty.$$
By the choice of the constant $C$ in the definition of $K_{\infty}(\gamma)$, we have $C_0c_{\infty}k^{\alpha_0}\ls (\widetilde k/2)^{2\alpha_0},$ and hence
$$\left\|\sum_{i=1}^{\widetilde k}v_i y_i\right\|_P \ls \delta(\widetilde k/2)^{2\alpha_0}\|v\|_\infty.$$
Again by the choice of the constant $C$ in the definition of $K_{\infty}(\gamma)$, $\widetilde k\gr 2\max\{C_{\mathrm{spike}}(\gamma),C_{\mathrm{stairs}}(\gamma)\}.$

Suppose first that for at least $\lceil \widetilde k/2\rceil$ indices $i\ls \widetilde k$ one has
$$\max_{0\ls r\ls \lfloor\log_2\sqrt{\lceil\widetilde k/2\rceil}\rfloor} \frac{m_\delta(y_i,r)}{2^{-2r}n}>\lceil\widetilde k/2\rceil^{\alpha_{\mathrm{sp}}}.$$
Let $L$ be the corresponding set of indices, with $|L|=\lceil\widetilde k/2\rceil$ after passing to a subset if necessary.
Since $|L|\gr C_{\mathrm{spike}}(\gamma)$, Lemma~\ref{lem:spikymdelta} gives a vector $v=(v_i)_{i\in L}$ with $\|v\|_\infty=1$ such that
$$\left\|\sum_{i\in L}v_i y_i\right\|_P\gr \delta |L|^{\alpha_{\mathrm{sp}}/5}\gr\delta (\widetilde k/2)^{4\alpha_0}.$$
Extending $v$ by zero outside $L$ and using the preceding upper estimate gives
$$\delta (\widetilde k/2)^{4\alpha_0} \ls \left\|\sum_{i\in L}v_i y_i\right\|_P \ls \delta(\widetilde k/2)^{2\alpha_0},$$
which is impossible by the choice of the constant $C$ in the definition of $K_{\infty}(\gamma)$.

Therefore there is a set $L'\subset\{1,\dots,\widetilde k\}$ with $|L'|=\lceil\widetilde k/2\rceil$ such that for every $i\in L'$,
$$\max_{0\ls r\ls \lfloor\log_2\sqrt{|L'|}\rfloor} \frac{m_\delta(y_i,r)}{2^{-2r}n}\ls |L'|^{\alpha_{\mathrm{sp}}}.$$
Since $\alpha_{\mathrm{sp}}=20\alpha_0\ls c_{\mathrm{stairs}}$ and $|L'|\gr C_{\mathrm{stairs}}(\gamma)$, Lemma~\ref{lem:stairs} gives a collection $\widetilde y_i$, $i\in U$, of unit Euclidean vectors with $q:=|U|=\lfloor |L'|^{1/16}\rfloor$ such that
$$\|\widetilde y_i\|_P\gr C_{\mathrm{stairs}}(\gamma)^{-1}|L'|^{-10\alpha_0}(\ln |L'|)^{-1} \qquad (i\in U),$$
and for every choice of signs $\sigma_i$, $i\in U$,
$$\left\|\sum_{i\in U}\sigma_i\widetilde y_i\right\|_P \ls C_{\mathrm{stairs}}(\gamma)|L'|^{20\alpha_0}.$$

We now apply Proposition~\ref{prop:spansofcomp} to the collection $(\widetilde y_i)_{i\in U}$.
First, $q\ls n/2$, since $q\ls |L'|\ls \widetilde k\ls k\ls n/2$.
Moreover, by the choice of the constant $C$ in the definition of $K_{\infty}(\gamma)$, $q\gr k_0.$
Finally, since $q=\lfloor |L'|^{1/16}\rfloor$ and $\Delta_1=1/208-10\alpha_0>0$, the choice of $C$ in the definition of $K_{\infty}(\gamma)$ gives $C_{\mathrm{stairs}}(\gamma)^{-1}|L'|^{-10\alpha_0}(\ln |L'|)^{-1}\gr C_{\mathrm{comp}}(\gamma)q^{-1/13}.$
Thus Proposition~\ref{prop:spansofcomp} gives
$$\Exp_\sigma\left\|\sum_{i\in U}\sigma_i\widetilde y_i\right\|_P\gr q^{1/8}.$$
On the other hand, the upper estimate from Lemma~\ref{lem:stairs} gives
$$\Exp_\sigma\left\|\sum_{i\in U}\sigma_i\widetilde y_i\right\|_P \ls C_{\mathrm{stairs}}(\gamma)|L'|^{20\alpha_0}.$$
Since $q=\lfloor |L'|^{1/16}\rfloor$ and $\Delta_2=1/128-20\alpha_0>0$, the choice of $C$ in the definition of $K_{\infty}(\gamma)$ gives $ q^{1/8}>C_{\mathrm{stairs}}(\gamma)|L'|^{20\alpha_0},$ a contradiction.

This proves that no such $E$ can satisfy $\BM(E,\ell_\infty^k)\ls c_{\infty}(\gamma)k^{\alpha_0}.$
Therefore $\BM(E,\ell_\infty^k)\gr c_{\infty}(\gamma)k^{\alpha_0}$ for every $1\ls k\ls n/2$.
\end{proof}

\section{Final isotropic log-concave analogues}

The following theorem is the log-concave aspect-ratio version of \cite[Theorem B]{HT26}. 
The proof follows the final reduction of Huang--Tikhomirov, the conditional exclusion theorem is first applied for $k\ls n/2$ and the remaining range $n/2<k\ls n$ is reduced to this case by restricting an almost optimal isomorphism from $\ell_\infty^k$ to a coordinate subspace of dimension $\lfloor n/2\rfloor$. 
In the present setting the input is Theorem~\ref{thm:conditional_gluskinlinfty} which is the log-concave replacement for the conditional theorem in~\cite{HT26}.
The polynomial dependence on $\gamma=N/n$ comes from the constants $K_{\infty}(\gamma)$ and $c_{\infty}(\gamma)$ and the probability estimate comes from the intersection of the events $\Omega_{\mathrm{sv}}$ and $\Omega_{\mathrm{app}}$.

\begin{theorem}\label{thm:sections} 
After increasing the fixed absolute constant $\gamma_0>1$ if necessary, assume $\gamma=N/n\gr \gamma_0$.
Let $\alpha:=\alpha_0$ be the absolute constant from Theorem~\ref{thm:conditional_gluskinlinfty}.
There are absolute constants $C,c>0$ such that the following holds.
Let $X_1,\dots,X_N$ be independent random vectors with common isotropic log-concave law $\mu$ on $\R^n$.
Set $P_{N,n}^{\mu}:=\conv\{\pm X_i:1\ls i\ls N\}$.
Then with probability at least $1-C\gamma\exp(-c n^{1/4}),$
for every $1\ls k\ls n$ and every $k$-dimensional subspace $E$ of $(\R^n,\|\cdot\|_{P_{N,n}^{\mu}})$,
$$\BM(E,\ell_\infty^k)\gr c\gamma^{-C} k^\alpha.$$
\end{theorem}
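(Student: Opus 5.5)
The plan is to combine Theorem~\ref{thm:conditional_gluskinlinfty} with a restriction argument for large $k$, and to absorb all side conditions on $n$ and $\gamma$ into the constants. Concretely, we take $\alpha=\alpha_0$, and we aim for $c\gamma^{-C}$ of the form $c\,c_\infty(\gamma)$, using that $c_\infty(\gamma)=\frac12K_\infty(\gamma)^{-\alpha_0}\gr c\gamma^{-C\alpha_0}$.

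\textbf{Reductions on $n$ and $\gamma$.} The event $\Omega=\Omega_{\mathrm{sv}}\cap\Omega_{\mathrm{app}}$ has probability at least $1-C\gamma\exp(-cn^{1/4})$ whenever $\gamma\ls\exp(\tilde c n^{1/4})$ (for small $\tilde c$), by Proposition~\ref{prop:sparse_singular_values} and Lemma~\ref{lem:approxlemma}. We first remove the cases where the theorem needs no probabilistic input.
\begin{itemize}
\item If $\gamma>\exp(\tilde c n^{1/4})$, then $C\gamma\exp(-cn^{1/4})\gr1$ once $c\ls\tilde c$ and $C\gr1$, so the claimed probability bound is trivial.
\item If $n<K_\infty(\gamma)=\lceil C\gamma^C\rceil$, then every $k\ls n$ satisfies $k^{\alpha_0}<K_\infty(\gamma)^{\alpha_0}$. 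Hence $c\gamma^{-C'}k^{\alpha_0}\ls 1\ls\BM(E,\ell_\infty^k)$ for a suitable choice of $C'$, and the estimate holds deterministically.
\end{itemize}
In the remaining case we have $n\gr K_\infty(\gamma)$ and $\gamma\ls\exp(\tilde c n^{1/4})$. We then fix a realization in $\Omega$, and Theorem~\ref{thm:conditional_gluskinlinfty} gives $\BM(E,\ell_\infty^k)\gr c_\infty(\gamma)k^{\alpha_0}$ for all $k\ls n/2$.

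\textbf{The range $n/2<k\ls n$.} Let $E$ be a subspace with $\dim E=k$, and let $T:\ell_\infty^k\to E$ be an isomorphism with $\|T\|\|T^{-1}\|\ls 2\BM(E,\ell_\infty^k)$. Set $m=\lfloor n/2\rfloor$ and restrict $T$ to the coordinate subspace $\spn\{e_1,\dots,e_m\}$, which is isometric to $\ell_\infty^m$. The image $E'=T(\spn\{e_1,\dots,e_m\})$ is an $m$-dimensional subspace of $(\R^n,\|\cdot\|_{P_{N,n}^{\mu}})$. The restriction satisfies $\|T|\|\ls\|T\|$ and $\|(T|)^{-1}\|\ls\|T^{-1}\|$, so $\BM(E',\ell_\infty^m)\ls 2\BM(E,\ell_\infty^k)$. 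Applying the previous step to $E'$ gives
$$\BM(E,\ell_\infty^k)\gr\tfrac12c_\infty(\gamma)m^{\alpha_0}\gr c\,c_\infty(\gamma)k^{\alpha_0},$$
where the last inequality uses $m\gr k/4$ for $n\gr2$; the case $n=1$ is trivial.

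\textbf{Main obstacle.} The only delicate point is the bookkeeping in the reductions: checking that the thresholds on $n$ (namely $n\gr K_\infty(\gamma)$ and $\gamma\ls\exp(\tilde cn^{1/4})$) are either absorbed into the constants or handled by the trivial bound $\BM\gr1$. One must also check that the resulting dependence on $\gamma$ stays polynomial, which it does because $c_\infty(\gamma)$ is polynomial in $\gamma$.
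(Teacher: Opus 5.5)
Your proposal is correct and follows essentially the same route as the paper. Both use the trivial bound $\BM\gr1$ when $n$ is below a polynomial threshold in $\gamma$, apply Theorem~\ref{thm:conditional_gluskinlinfty} on $\Omega$ for $k\ls n/2$, and restrict a near-optimal isomorphism to an $\lfloor n/2\rfloor$-dimensional coordinate subspace for $k>n/2$. The only cosmetic difference is that you dispose of the case $\gamma>\exp(\tilde c n^{1/4})$ by noting that the probability bound is then vacuous. The paper instead builds this into its threshold $K_B(\gamma)=\lceil C_1\gamma^{C_1}\rceil$, so that $n\gr K_B(\gamma)$ already forces $\gamma\ls\exp(cn^{1/4})$.
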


\begin{proof}
Let $P:=P_{N,n}^{\mu}$ and let $\tilde c>0$ be the absolute constant from Proposition~\ref{prop:sparse_singular_values}.
Let $K_{\infty}(\gamma)$ and $c_{\infty}(\gamma)$ be the constants from Theorem~\ref{thm:conditional_gluskinlinfty}.
Choose an absolute constant $C_1\gr1$ sufficiently large and put $K_B(\gamma):=\left\lceil C_1\gamma^{C_1}\right\rceil.$
After increasing $C_1$ and $\gamma_0$ if necessary, for every $\gamma\gr\gamma_0$ we have 
$$K_B(\gamma)\gr \max\left\{K_{\infty}(\gamma),\left(\frac{\ln(e\gamma)}{\tilde c}\right)^2,2\right\}.$$
Moreover, since $K_{\infty}(\gamma)\ls C\gamma^C$ and $c_{\infty}(\gamma)=\frac12K_{\infty}(\gamma)^{-\alpha}$, after increasing the absolute constant $C_1$ and decreasing the absolute constant $c_1>0$ if necessary, the quantity $c_1\gamma^{-C_1}$ is bounded above by both $K_B(\gamma)^{-\alpha}$ and $\frac{c_{\infty}(\gamma)}{2\cdot 3^\alpha}$.
We prove the theorem with $c\gamma^{-C}$ replaced by $c_1\gamma^{-C_1}$.

If $n<K_B(\gamma)$, then for every $1\ls k\ls n$ and every $k$-dimensional normed space $E$ one has $\BM(E,\ell_\infty^k)\gr1$.
Since $k\ls n<K_B(\gamma)$ and $c_1\gamma^{-C_1}\ls K_B(\gamma)^{-\alpha}$, we have $ c_1\gamma^{-C_1}k^\alpha\ls K_B(\gamma)^{-\alpha}k^\alpha\ls1.$
Thus the conclusion is deterministic in this case.

Assume now that $n\gr K_B(\gamma)$.
Then, by the choice of $K_B(\gamma)$, $\gamma\ls \exp(cn^{1/4}).$
By Proposition~\ref{prop:sparse_singular_values} and Lemma~\ref{lem:approxlemma}, $\Prob(\Omega)\gr 1-C\gamma\exp(-c n^{1/4})$ with absolute constants $C,c>0$.
Work on a realization in $\Omega$.

By Theorem~\ref{thm:conditional_gluskinlinfty}, for every $1\ls k\ls n/2$ and every $k$-dimensional subspace $E$ of $(\R^n,\|\cdot\|_P)$,
$$\BM(E,\ell_\infty^k)\gr c_{\infty}(\gamma)k^\alpha\gr c_1\gamma^{-C_1}k^\alpha.$$

It remains to pass from $k\ls n/2$ to all $k\ls n$.
Let $k>n/2$ and let $E$ be a $k$-dimensional subspace. 
Fix $D>\BM(E,\ell_\infty^k)$.
Let $T:\ell_\infty^k\to E$ be an isomorphism with
$\|T\|\|T^{-1}\|\ls 2D$.
Let $F_0\subset\ell_\infty^k$ be a coordinate subspace of dimension $m:=\lfloor n/2\rfloor$.
Then $TF_0$ is an $m$-dimensional subspace of $(\R^n,\|\cdot\|_P)$ and
$$\BM(TF_0,\ell_\infty^m)\ls2D.$$
Since $m\ls n/2$, the already proved estimate gives $2D\gr c_{\infty}(\gamma)m^\alpha.$
As $k\ls n$ and $n\gr2$, one has $m=\lfloor n/2\rfloor\gr n/3\gr k/3$.
Therefore $D\gr \frac{c_{\infty}(\gamma)}{2\cdot3^\alpha}k^\alpha$.
Letting $D\downarrow \BM(E,\ell_\infty^k)$ gives
$$\BM(E,\ell_\infty^k)\gr \frac{c_{\infty}(\gamma)}{2\cdot3^\alpha}k^\alpha\gr c_1\gamma^{-C_1}k^\alpha.$$
This proves the claim for every $1\ls k\ls n$.
Renaming the absolute constants gives the stated form.
\end{proof}

The following theorem is the log-concave quantitative version of \cite[Theorem A]{HT26}. 
As in Huang--Tikhomirov it is obtained from the exclusion of $\ell_{\infty}^k$-sections, Theorem~\ref{thm:sections}, by an application of the cotype part of the Maurey--Pisier theorem. 
Here we use the quantitative form stated in Theorem~\ref{thm:quant_MP_cotype} rather than only the qualitative non-asymptotic formulation used in \cite{HT26}. 
This allows us to record an explicit cotype exponent $q(\gamma)$ depending only on the aspect ratio $\gamma=N/n$. 
The small-dimensional case $n\ls k_{\mathrm{MP}}(\gamma)$ is handled deterministically by John's theorem.

\begin{theorem}\label{thm:cotype}
Assume $\gamma=N/n\gr \gamma_0$.
Let $\alpha=\alpha_0$ be the absolute constant from Theorem~\ref{thm:sections}.
There is an absolute constant $C_0\gr1$ such that, with
$k_{\mathrm{MP}}(\gamma):=\left\lceil C_0\gamma^{C_0}\right\rceil,$ the following holds.
Put
$$q(\gamma):=\max\left\{2,\ 1+\left(50k_{\mathrm{MP}}(\gamma)\right)^{2(k_{\mathrm{MP}}(\gamma)+1)}\right\}.$$
There are absolute constants $C,c>0$ such that, with
$$C_A(\gamma):=Ck_{\mathrm{MP}}(\gamma)^{1/2},$$
the following holds.
Let $X_1,\dots,X_N$ be independent random vectors with common isotropic log-concave law $\mu$ on $\R^n$.
Let $P_{N,n}^{\mu}:=\conv\{\pm X_i:1\ls i\ls N\}$.
Then, with probability at least $1-C\gamma\exp(-c n^{1/4}),$ the normed space $(\R^n,\|\cdot\|_{P_{N,n}^{\mu}})$ has cotype $q(\gamma)$ with constant at most $C_A(\gamma)$.
In particular, for every $m\gr 1$ and every $y_1,\dots,y_m\in\R^n$,
$$\Exp_\sigma\left\|\sum_{i=1}^m\sigma_i y_i\right\|_{P_{N,n}^{\mu}}^{q(\gamma)} \gr \frac1{C_A(\gamma)^{q(\gamma)}}\sum_{i=1}^m\|y_i\|_{P_{N,n}^{\mu}}^{q(\gamma)}.$$
\end{theorem}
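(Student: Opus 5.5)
The plan is to combine Theorem~\ref{thm:sections} with the quantitative Maurey--Pisier estimate, Theorem~\ref{thm:quant_MP_cotype}, applied with $\varepsilon=1$. The aim is to show that $\ell_\infty^{k_{\mathrm{MP}}}$ is not $2$-isomorphic to a subspace of $X_{N,n}^\mu$. Write $\alpha=\alpha_0$, and let $c_1\gamma^{-C_1}k^\alpha$ be the lower bound from Theorem~\ref{thm:sections}. Choose $C_0$ large enough that $k_{\mathrm{MP}}(\gamma)=\lceil C_0\gamma^{C_0}\rceil$ satisfies
$$c_1\gamma^{-C_1}k_{\mathrm{MP}}(\gamma)^\alpha>2$$
for all $\gamma\gr\gamma_0$. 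Since $k_{\mathrm{MP}}^\alpha\gr C_0^\alpha\gamma^{C_0\alpha}$, it suffices to take $C_0\alpha\gr C_1$ and $C_0^\alpha>2/c_1$.

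The argument then splits into two cases.

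\textbf{Case 1: $n\gr k_{\mathrm{MP}}(\gamma)$.} On the event of Theorem~\ref{thm:sections}, which has probability at least $1-C\gamma\exp(-cn^{1/4})$, every $k_{\mathrm{MP}}$-dimensional subspace $E$ satisfies $\BM(E,\ell_\infty^{k_{\mathrm{MP}}})>2$. So $\ell_\infty^{k_{\mathrm{MP}}}$ is not $(1+1)$-isomorphic to any subspace. Theorem~\ref{thm:quant_MP_cotype} with $N=k_{\mathrm{MP}}\gr2$ and $\varepsilon=1$ then gives cotype $q$ with constant at most $5$ for every
$$q>(50k_{\mathrm{MP}})^{2(k_{\mathrm{MP}}+1)}.$$
In particular this holds for $q(\gamma)$ as defined, and $5\ls C_A(\gamma)$ once $C\gr5$.

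\textbf{Case 2: $n<k_{\mathrm{MP}}(\gamma)$.} This case is deterministic. By John's theorem (Theorem~\ref{thm:john}), $X_{N,n}^\mu$ is $\sqrt n$-isomorphic to $\ell_2^n$, which has cotype $2$ with an absolute constant $C_H$. Transferring cotype gives cotype $2$ with constant at most $C_H\sqrt n\ls C_Hk_{\mathrm{MP}}^{1/2}$. By monotonicity in the exponent, this yields cotype $q(\gamma)\gr2$ with the same constant, so $C_A(\gamma)=Ck_{\mathrm{MP}}^{1/2}$ works with $C\gr\max\{5,C_H\}$.

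One technical point needs care in Case 1: the probability of the event. Theorem~\ref{thm:sections} already delivers it in the required form, including the regime of large $\gamma$ where $1-C\gamma e^{-cn^{1/4}}$ may be trivial. So nothing extra is needed there.

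The final displayed inequality is just the definition of cotype raised to the power $q(\gamma)$. The main obstacle is only bookkeeping: making sure a single absolute $C_0$ works uniformly in $\gamma\gr\gamma_0$. This follows since both $c_1\gamma^{-C_1}$ and $k_{\mathrm{MP}}$ are polynomial in $\gamma$, and $\gamma^{C_0\alpha-C_1}\gr1$.
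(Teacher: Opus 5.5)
Your proof is correct and follows the paper's argument essentially verbatim. You choose $C_0$ so that $c_1\gamma^{-C_1}k_{\mathrm{MP}}(\gamma)^\alpha>2$, apply Theorem~\ref{thm:quant_MP_cotype} with $\varepsilon=1$ on the event of Theorem~\ref{thm:sections} for large $n$, and use John's theorem deterministically for small $n$. The only difference is that you put the boundary case $n=k_{\mathrm{MP}}(\gamma)$ in the Maurey--Pisier case rather than the John case. This is harmless, since Theorem~\ref{thm:sections} covers $k=n$.
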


\begin{proof}
Let $P:=P_{N,n}^{\mu}$.
Let $c_1,C_1>0$ be absolute constants such that Theorem~\ref{thm:sections} gives, on its good event, $\BM(E,\ell_\infty^k)\gr c_1\gamma^{-C_1}k^\alpha$ for every $1\ls k\ls n$ and every $k$-dimensional subspace $E$ of $(\R^n,\|\cdot\|_P)$.
Choose the absolute constant $C_0$ in the definition of $k_{\mathrm{MP}}(\gamma)$ sufficiently large so that, for every $\gamma\gr\gamma_0$, $c_1\gamma^{-C_1}k_{\mathrm{MP}}(\gamma)^\alpha>2.$

On the event from Theorem~\ref{thm:sections}, for every $1\ls k\ls n$ and every $k$-dimensional subspace $E$ of $(\R^n,\|\cdot\|_P)$, $\BM(E,\ell_\infty^k)\gr c_1\gamma^{-C_1}k^\alpha.$
This event has probability at least $1-C\gamma\exp(-c n^{1/4})$.

Assume first that $n>k_{\mathrm{MP}}(\gamma)$.
Then every $k_{\mathrm{MP}}(\gamma)$-dimensional subspace $E$ satisfies
$$\BM(E,\ell_\infty^{k_{\mathrm{MP}}(\gamma)})\gr c_1\gamma^{-C_1}k_{\mathrm{MP}}(\gamma)^\alpha>2.$$
Thus $\ell_\infty^{k_{\mathrm{MP}}(\gamma)}$ is not $2$-isomorphic to a subspace of $(\R^n,\|\cdot\|_P)$. 
Applying Theorem~\ref{thm:quant_MP_cotype} with $\varepsilon=1$ we conclude that $(\R^n,\|\cdot\|_P)$ has cotype $q(\gamma)$ 
with constant at most $5\ls C_A(\gamma)$, after increasing the absolute constant $C$ if necessary.

It remains to handle $n\ls k_{\mathrm{MP}}(\gamma)$.
By Theorem~\ref{thm:john}, every $n$-dimensional normed space is $\sqrt n$-isomorphic to a Hilbert space.
Since $n\ls k_{\mathrm{MP}}(\gamma)$ and Hilbert spaces have cotype $2$, hence cotype $q(\gamma)$, with an absolute constant, $(\R^n,\|\cdot\|_P)$ has cotype $q(\gamma)$ with constant at most $C k_{\mathrm{MP}}(\gamma)^{1/2}\ls C_A(\gamma),$ after increasing the absolute constant $C$ if necessary.

This proves the claim on the event from Theorem~\ref{thm:sections} and the stated probability follows from that theorem.
\end{proof}

The following theorem is the log-concave version of \cite[Theorem C]{HT26}. 
The construction of the Banach space follows Huang--Tikhomirov, one chooses in every sufficiently large dimension two finite-dimensional spaces of uniformly finite cotype which are at Banach--Mazur distance of order $n$ and then forms their $\ell_q$-direct sum. 
In the present setting the uniform finite-cotype estimate for the building blocks is supplied by Theorem~\ref{thm:cotype} while the separation of two independent log-concave random-polytope spaces is supplied by the log-concave Gluskin theorem, Proposition~\ref{prop:gluskin_separation}. 
The latter input is what allows the building blocks to be chosen as normed spaces generated by centrally symmetric random polytopes associated with isotropic log-concave measures.
The argument below is included to make explicit that the cotype exponent $q$ and the cotype constant are independent of the dimension $n$.

\begin{theorem}\label{thm:theoremC}
There exists a separable Banach space ${\bf X}$ of finite cotype such that $D_{\bf X}(k)=\Theta(k)$ as $k\to\infty$.
Moreover, the finite-dimensional building blocks in the construction may be chosen to be normed spaces generated by centrally symmetric random polytopes associated with isotropic log-concave measures.
\end{theorem}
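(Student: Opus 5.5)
The plan is to follow the Huang--Tikhomirov construction. Fix $\varepsilon_{\mathrm{Gl}}=1/4$ in Proposition~\ref{prop:gluskin_separation}, which gives an integer $\gamma_*:=C_{\mathrm{Gl}}\gr\gamma_0$, a constant $c_{\mathrm{Gl}}>0$ and a dimension $n_{\mathrm{Gl}}$. Since $\gamma_*$ is fixed, Theorem~\ref{thm:cotype} yields a fixed exponent $q:=q(\gamma_*)<\infty$ and a fixed constant $C_A:=C_A(\gamma_*)$.

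For each $n\gr n_*$, where $n_*\gr n_{\mathrm{Gl}}$ is large enough that $C\gamma_*\exp(-cn^{1/4})<1/4$, I would take an isotropic log-concave measure $\mu_n$ on $\R^n$; any choice works, for example the uniform measure on a normalized cube. Put $N_n=\gamma_* n$ and consider two independent copies $P_n,\widetilde P_n$ of $P^{\mu_n}_{N_n,n}$. By Theorem~\ref{thm:cotype}, each of the two spaces has cotype $q$ with constant at most $C_A$ with probability at least $3/4$. By Proposition~\ref{prop:gluskin_separation}, their Banach--Mazur distance is at least $c_{\mathrm{Gl}}n$ with probability at least $3/4$. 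A union bound then produces a realization satisfying all three properties. Call the resulting spaces $E_n:=(\R^n,\|\cdot\|_{P_n})$ and $F_n:=(\R^n,\|\cdot\|_{\widetilde P_n})$.

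Set ${\bf X}:=\big(\bigoplus_{n\gr n_*}(E_n\oplus F_n)\big)_{\ell_q}$. This space is separable. To see that it has finite cotype, I would use that an $\ell_q$-sum of spaces with uniform cotype $q$ constants $C_A$ again has cotype $q$ with constant $\ls C C_A$. The argument: for $x_i=(x_i^{(m)})_m$, expand $\Exp_\sigma\|\sum_i\sigma_ix_i\|^q=\sum_m\Exp_\sigma\|\sum_i\sigma_ix_i^{(m)}\|^q$, apply the cotype inequality in each block, and resum, since $\sum_i\|x_i\|^q=\sum_m\sum_i\|x_i^{(m)}\|^q$. So the $\ell_q$-structure makes the exponents match exactly, and no Kahane step is needed. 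The finite sum $E_n\oplus_q F_n$ is a case of the same computation.

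For the distance function, the upper bound $D_{\bf X}(k)\ls k$ is John's theorem. For the lower bound with $k\gr n_*$, both $E_k$ and $F_k$ embed isometrically as $k$-dimensional subspaces of ${\bf X}$, and $\BM(E_k,F_k)\gr c_{\mathrm{Gl}}k$, so $D_{\bf X}(k)\gr c_{\mathrm{Gl}}k$ and hence $D_{\bf X}(k)=\Theta(k)$. By construction the building blocks are random-polytope spaces generated by isotropic log-concave measures.

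The only delicate point is ensuring that both the cotype exponent and the cotype constant stay uniform in $n$. Theorem~\ref{thm:cotype} provides exactly this, because $\gamma_*$ is fixed independently of $n$. One also has to check that the probability bounds allow all three events to occur simultaneously in every large dimension, and the choice of $n_*$ handles this.
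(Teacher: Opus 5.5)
Your proposal is correct and matches the paper's proof. You fix $\gamma=C_{\mathrm{Gl}}$, combine Theorem~\ref{thm:cotype} and Proposition~\ref{prop:gluskin_separation} with a union bound to choose two realizations in each large dimension, form the $\ell_q$-sum, check cotype blockwise, and get $c_{\mathrm{Gl}}k\ls D_{\bf X}(k)\ls k$ from the Gluskin separation and John's theorem. The only cosmetic difference is that you fix $\varepsilon_{\mathrm{Gl}}=1/4$, whereas the paper keeps $\varepsilon_{\mathrm{Gl}}$ general and chooses $n_0$ so that $2CC_{\mathrm{Gl}}\exp(-cn^{1/4})<(1-\varepsilon_{\mathrm{Gl}})/2$.
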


\begin{proof}
After the final choice of $\gamma_0$, let $C_{\mathrm{Gl}},c_{\mathrm{Gl}},\varepsilon_{\mathrm{Gl}}$ and $n_{\mathrm{Gl}}$ be the constants from Proposition~\ref{prop:gluskin_separation}.
In particular $C_{\mathrm{Gl}}\gr\gamma_0$.
Put $N_n:=C_{\mathrm{Gl}}n$.
Apply Theorem~\ref{thm:cotype} with the fixed value $\gamma=C_{\mathrm{Gl}}$.
This gives numbers $q:=q(C_{\mathrm{Gl}})$ and $C_A:=C_A(C_{\mathrm{Gl}})$ and absolute constants $C,c>0$ such that, for every isotropic log-concave law $\mu$ on $\R^n$, $(\R^n,\|\cdot\|_{P_{N_n,n}^{\mu}})$ has cotype $q$ with constant at most $C_A$ with probability at least $1-CC_{\mathrm{Gl}}\exp(-c n^{1/4}).$

For each $n\gr n_{\mathrm{Gl}}$, fix an isotropic log-concave probability measure $\mu_n$ on $\R^n$.
Let $P_{N_n,n}^{\mu_n}$ and $\widetilde P_{N_n,n}^{\mu_n}$ be two independent copies.
By the preceding paragraph and the union bound, the probability that at least one of the two corresponding normed spaces fails to have cotype $q$ with constant at most $C_A$ is at most $2CC_{\mathrm{Gl}}\exp(-c n^{1/4}).$
Choose $n_0\gr n_{\mathrm{Gl}}$ so large that
$$2CC_{\mathrm{Gl}}\exp(-c n^{1/4})<\frac{1-\varepsilon_{\mathrm{Gl}}}{2},\qquad n\gr n_0.$$
Combining this with Proposition~\ref{prop:gluskin_separation}, for every $n\gr n_0$ there is positive probability that both copies have cotype $q$ with constant at most $C_A$ and satisfy
$$\BM\big((\R^n,\|\cdot\|_{P_{N_n,n}^{\mu_n}}),(\R^n,\|\cdot\|_{\widetilde P_{N_n,n}^{\mu_n}})\big)\gr c_{\mathrm{Gl}}n.$$
Thus, for every $n\gr n_0$, we may choose deterministic realizations $P_n,\widetilde P_n\subset\R^n$ such that
$$\BM\big((\R^n,\|\cdot\|_{P_n}),(\R^n,\|\cdot\|_{\widetilde P_n})\big)\gr c_{\mathrm{Gl}}n,$$
and both $(\R^n,\|\cdot\|_{P_n})$ and $(\R^n,\|\cdot\|_{\widetilde P_n})$ have cotype $q$ with constant at most $C_A$.

Define ${\bf X}$ as the $\ell_q$-direct sum
$${\bf X}:=\left(\bigoplus_{n\gr n_0}\left((\R^n,\|\cdot\|_{P_n})\oplus_q(\R^n,\|\cdot\|_{\widetilde P_n})\right)\right)_{\ell_q}.$$
Equivalently, ${\bf X}$ consists of sequences $(x_n,\widetilde x_n)_{n\gr n_0}$, with $x_n,\widetilde x_n\in\R^n$, such that
$$\|(x_n,\widetilde x_n)_{n\gr n_0}\|_{\bf X}^q:=\sum_{n\gr n_0}\left(\|x_n\|_{P_n}^q+\|\widetilde x_n\|_{\widetilde P_n}^q\right)<\infty.$$
This space is separable, being an $\ell_q$-sum of finite-dimensional spaces.

We verify that ${\bf X}$ has cotype $q$.
Let $y^{(1)},\dots,y^{(m)}\in{\bf X}$ and write $y^{(i)}=(y_n^{(i)},\widetilde y_n^{(i)})_{n\gr n_0}$.
By Tonelli's theorem and the cotype inequalities for the summands,
\begin{align*}
\Exp_\sigma\left\|\sum_{i=1}^m\sigma_i y^{(i)}\right\|_{\bf X}^q
&=\sum_{n\gr n_0}\left(\Exp_\sigma\left\|\sum_{i=1}^m\sigma_i y_n^{(i)}\right\|_{P_n}^q+\Exp_\sigma\left\|\sum_{i=1}^m\sigma_i\widetilde y_n^{(i)}\right\|_{\widetilde P_n}^q\right)\\
&\gr \frac1{C_A^q}\sum_{n\gr n_0}\left(\sum_{i=1}^m\|y_n^{(i)}\|_{P_n}^q+\sum_{i=1}^m\|\widetilde y_n^{(i)}\|_{\widetilde P_n}^q\right) =\frac1{C_A^q}\sum_{i=1}^m\|y^{(i)}\|_{\bf X}^q.
\end{align*}
Thus ${\bf X}$ has cotype $q$.

For every $n\gr n_0$, the space ${\bf X}$ contains isometric copies of $(\R^n,\|\cdot\|_{P_n})$ and $(\R^n,\|\cdot\|_{\widetilde P_n})$.
Their Banach--Mazur distance is at least $c_{\mathrm{Gl}}n$, hence
$D_{\bf X}(n)\gr c_{\mathrm{Gl}}n$ for $n\gr n_0.$
Thus, for every $k\gr n_0$, $D_{\bf X}(k)\gr c_{\mathrm{Gl}}k.$
On the other hand by Theorem~\ref{thm:john} any two $k$-dimensional normed spaces are at Banach--Mazur distance at most $k$.
Therefore 
$$c_{\mathrm{Gl}}k\ls D_{\bf X}(k)\ls k,\qquad k\gr n_0.$$
Hence $D_{\bf X}(k)=\Theta(k)$ as $k\to\infty$.
\end{proof}

\bigskip

\noindent {\bf Declaration of AI-assisted technologies in the manuscript preparation process.} 
During the preparation of this work the author used ChatGPT (OpenAI) to assist with the organization, drafting and editing of parts of the manuscript. The mathematical arguments, statements, references and final text were reviewed and edited by the author who takes full responsibility for the content of the paper.

\noindent {\bf Acknowledgement.} 
The author acknowledges support by a PhD scholarship from the National Technical University of Athens.
The author thanks Apostolos Giannopoulos and Giorgos Chasapis for useful discussions.

\bigskip 



\footnotesize
\bibliographystyle{amsplain}

\begin{thebibliography}{100}

\bibitem{AAGM15}
\textrm{S. Artstein-Avidan, A. Giannopoulos and V. D. Milman},
\textit{Asymptotic Geometric Analysis, Part I},
Mathematical Surveys and Monographs, vol. 202,
American Mathematical Society, Providence, RI, 2015.

\bibitem{AAGM21}
\textrm{S. Artstein-Avidan, A. Giannopoulos and V. D. Milman},
\textit{Asymptotic Geometric Analysis, Part II},
Mathematical Surveys and Monographs, vol. 261,
American Mathematical Society, Providence, RI, 2021.

\bibitem{ALPTJ11sharp}
\textrm{R. Adamczak, A. E. Litvak, A. Pajor and N. Tomczak-Jaegermann},
\textit{Sharp bounds on the rate of convergence of the empirical covariance matrix},
C. R. Math. Acad. Sci. Paris 349 (2011), 195--200.

\bibitem{ALPTJ11}
\textrm{R. Adamczak, A. E. Litvak, A. Pajor and N. Tomczak-Jaegermann},
\textit{Restricted isometry property of matrices with independent columns and neighborly polytopes by random sampling},
Constr. Approx. \textbf{34} (2011), 61--88.

\bibitem{ALPTJ10}
\textrm{R. Adamczak, A. E. Litvak, A. Pajor and N. Tomczak-Jaegermann},
\textit{Quantitative estimates of the convergence of the empirical covariance matrix in log-concave ensembles},
J. Amer. Math. Soc. \textbf{23} (2010), no.~2, 535--561.

\bibitem{BGVV-book} 
\textrm{S.\ Brazitikos, A.\ Giannopoulos, P.\ Valettas and B-H.\ Vritsiou}, 
\textit{Geometry of isotropic convex bodies}, 
Mathematical Surveys and Monographs, 196. American Mathematical Society, Providence, RI, 2014. xx+594 pp.

\bibitem{GH26}
\textrm{A. Giannopoulos and A. Hmadi},
\textit{Banach--Mazur distances and basis constants of isotropic log-concave random spaces},
arXiv:2604.12692.

\bibitem{GM11}
\textrm{O. Gu\'edon and E. Milman},
\textit{Interpolating thin-shell and sharp large-deviation estimates for isotropic log-concave measures},
Geom. Funct. Anal. \textbf{21} (2011), no.~5, 1043--1068.

\bibitem{Hinrichs96}
\textrm{A.~Hinrichs},
\textit{On the type constants with respect to systems of characters of a compact abelian group},
Studia Math. \textbf{118} (1996), no.~3, 237--243.

\bibitem{HT26}
\textrm{H. Huang and K. Tikhomirov},
\textit{Cotype of random polytopes},
arXiv:2603.04749.

\bibitem{HNVW17}
\textrm{T.~Hyt\"{o}nen, J.~van Neerven, M.~Veraar and L.~Weis},
\textit{Analysis in Banach Spaces. Volume II: Probabilistic Methods and Operator Theory},
Ergebnisse der Mathematik und ihrer Grenzgebiete, vol.~67,
Springer, Cham, 2017.

\bibitem{LT91}
\textrm{M. Ledoux and M. Talagrand},
\textit{Probability in Banach Spaces. Isoperimetry and Processes},
Ergebnisse der Mathematik und ihrer Grenzgebiete, vol. 23,
Springer-Verlag, Berlin, 1991.

\bibitem{MP76}
\textrm{B. Maurey and G. Pisier},
\textit{S\'eries de variables al\'eatoires vectorielles ind\'ependantes et propri\'et\'es g\'eom\'etriques des espaces de Banach},
Studia Math. \textbf{58} (1976), no.~1, 45--90.

\bibitem{Paouris06}
\textrm{G. Paouris},
\textit{Concentration of mass on convex bodies},
Geom. Funct. Anal. \textbf{16} (2006), no.~5, 1021--1049.

\bibitem{Pisier74}
\textrm{G.~Pisier},
\textit{Sur les espaces qui ne contiennent pas de $\ell_1^n$ uniform\'ement},
S\'eminaire Analyse fonctionnelle (dit ``Maurey--Schwartz'') (1973--1974),
Expos\'e VII, \'Ecole Polytechnique, Centre de Math\'ematiques, 1--19.
\end{thebibliography}

\bigskip

\thanks{\noindent {\bf Keywords:} Banach--Mazur distance; Gluskin spaces; Random polytopes; cotype; Maurey--Pisier theorem; Isotropic log-concave probability measures.}

\smallskip

\thanks{\noindent {\bf 2020 MSC:} Primary 46B06; Secondary 46B09, 46B20, 52A40, 52A23, 60D05.}

\bigskip

\bigskip 

\medskip

\noindent \textsc{Antonios \ Hmadi}: School of Applied Mathematical and Physical Sciences, National Technical University of Athens, Department of Mathematics, Zografou Campus, GR-157 80, Athens, Greece.

\smallskip

\noindent \textit{E-mail:} \texttt{ahmadi@mail.ntua.gr}

\end{document}